
\documentclass{article}
\usepackage{amsmath, amsthm, amssymb, setspace, graphics,  graphicx}
\usepackage[all]{xy}
\usepackage{url}
\textwidth16cm
\textheight22.5cm
\topmargin-1cm
\evensidemargin0cm
\oddsidemargin0cm

\setcounter{secnumdepth}{2}
\setcounter{tocdepth}{2}


%

\newtheorem{thm}{Theorem}[subsection] 
\newtheorem{pro}[thm]{Proposition} 
\newtheorem{lem}[thm]{Lemma} 
\newtheorem{cor}[thm]{Corollary} 
\newtheorem{conj}[thm]{Conjecture}
\newtheorem{prob}[thm]{Problem}
\theoremstyle{definition} 
\newtheorem{defn}[thm]{Definition} 
\theoremstyle{remark} 
\newtheorem{rem}[thm]{Remark}
\newtheorem{exa}[thm]{Example}

\hyphenation{quasi-smooth sub-maximal explicit geometry
irreduc-ible rigorous criterion em-bedded never-theless}

\newcommand{\CC}{\mathbb C}

\newcommand{\NN}{\mathbb N}
\newcommand{\PP}{\mathbb P}
\newcommand{\QQ}{\mathbb Q}

\newcommand{\UU}{\mathbb U}

\newcommand{\WW}{\mathbb W}
\newcommand{\ZZ}{\mathbb Z}

\newcommand{\Ocal}{\mathcal O}
\newcommand{\Acal}{\mathcal A}
\newcommand{\Ccal}{\mathcal C}
\newcommand{\Lcal}{\mathcal L}
\newcommand{\Hcal}{\mathcal H}
\newcommand{\Fcal}{\mathcal F}

\newcommand{\Vcal}{\mathcal V}

\newcommand{\Mcal}{\mathcal M}
\newcommand{\Ecal}{\mathcal E}
\newcommand{\Bcal}{\mathcal B}
\newcommand{\Xcal}{\mathcal X}
\newcommand{\Qcal}{\mathcal Q}
\newcommand{\Dcal}{{\mathcal D}}
\newcommand{\iii}{{h}}

\newcommand{\id}{\operatorname{id}}
\newcommand{\Aut}{\operatorname{Aut}}
\newcommand{\Jac}{\operatorname{Jac}}

\newcommand{\mult}{\operatorname{mult}}

\newcommand{\Hom}{\operatorname{Hom}}

\newcommand{\Diag}{\operatorname{Diag}}

\newcommand{\Res}{\operatorname{Res}}
\newcommand{\al}{\alpha}
\newcommand{\be}{\beta}

\newcommand{\fie}{\varphi}

\newcommand{\la}{\lambda}

\newcommand{\ev}{{\rm ev}}

\newcommand{\Jgr}{\langle j_W\rangle}
\newcommand{\Jnw}{\langle j\rangle}
\newcommand{\ctop}{{c}_{\mathrm{top}}}

\newcommand{\cxi}{\mathtt{i}}
\newcommand{\rk}{\operatorname{rk}}

\newcommand{\age}{\operatorname{age}}
\newcommand{\hess}{\operatorname{hess}}

\newcommand{\ch}{{\rm ch}}
\newcommand{\td}{\operatorname{td}}
\newcommand{\Sym}{{\operatorname{Sym}}}
\newcommand{\textsum}{{\textstyle{\sum}}}

\providecommand{\abs}[1]{\lvert#1\rvert}

\def\vir{{\rm vir}}

\def\pmmu{{\pmb \mu}}

\def\W{\mathbb{W}}
\def\RW{\operatorname{FJRW}}
\def\CR{\operatorname{CR}}
\def\GW{\operatorname{GW}}

\def\ol{\overline}

\def\wt{\widetilde}
\def\wh{\widehat}

\newcommand{\MMM}{\overline{\mathcal M}}
\newcommand{\CCC}{\mathcal C}

\begin{document}

\title{
\textbf{A global mirror symmetry framework for the Landau--Ginzburg/Calabi--Yau correspondence}}

\author{Alessandro Chiodo and Yongbin Ruan}
\maketitle

\begin{abstract}
{We show how the Landau--Ginzburg/Calabi--Yau correspondence
for the quintic three-fold can be cast into a global
mirror symmetry framework.
Then we draw inspiration from Berglund--H\"ubsch  
mirror duality construction to
provide an analogue picture
featuring all Calabi--Yau hypersurfaces within weighted projective spaces
and certain quotients by finite abelian group
actions.}
 \end{abstract}

\vspace*{6pt}\tableofcontents

\section{Introduction}
We survey FJRW theory introduced by Fan, Jarvis, and the second author 
for the Landau--Ginzburg model following ideas of 
Witten.
We review its connection 
to related work and we provide a prospectus on the 
ideas guiding the long term development 
of FRJW theory. The paper also contains 
some new results on foundational aspects of FRJW theory. 

The theory can be motivated as a tool for the computation of Gromov--Witten invariants.
Almost twenty years ago  
a correspondence was proposed 
(see \cite{VW89} and \cite{Wi93b}) in order to 
connect two areas of physics:
the Landau--Ginzburg (LG) model and Calabi--Yau (CY)
geometry.
In simple terms,
the geometry of certain CY spaces
is expected to be completely encoded by another
geometrical object, the LG model,
which is in many cases easier to study.
The case of the quintic three-fold 
illustrates this well: a smooth hypersurface defined in $\PP^4$ by a 
homogeneous degree-five polynomial plays a central role in Gromov--Witten theory since 
its early developments. Whereas in genus zero the theory has been completely elucidated in 
 \cite{Givental} and \cite{LLY} matching the mirror symmetry conjecture,
for positive genus the theory is largely unknown: it has been
    determined by Zinger \cite{Zi} for $g=1$ and  is still wide open for $g>1$ despite the
joint effort of mathematicians and physicists over the last twenty years.  
From the point of view of theoretical physics, the most advanced effort is
Huang, Klemm, and Quackenbush's speculation \cite{HKQ} via a physical argument; it is striking however
that, even with these far-reaching techniques,
there is no prediction beyond $g=52$.
    A natural idea to approach the higher genus cases
    consists in providing a mathematical statement of the physical LG-CY
    correspondence and 
    using the computational power of the LG singularity model to determine the 
    higher genus
    Gromov--Witten invariants of the CY manifold.
    From the mathematical point of view, this conceptual framework is largely incomplete: whereas Gromov--Witten (GW)
    theory embodies all the relevant information on the CY side,
    it is not clear which theory plays the
    same role on the LG side. This is likely to be interesting in its own right; 
    for instance, in a different context, the LG-CY correspondence led to Orlov's equivalence 
    between the derived category
    of complexes of coherent
    sheaves and matrix factorizations (see \cite{Orlov}, \cite{H} and \cite{Ko1}).

    In \cite{FJR1, FJR2, FJR3}, Fan, Jarvis and the second author construct 
    such a candidate quantum theory of singularities: FJRW theory.
    In intuitive terms,  
    GW theory
    may be regarded as the study of the solutions of the Cauchy--Riemann
    equation $\ol{\partial}f=0$ for
    the map $f\colon C\rightarrow
    X_W$, where $C$ is a compact Riemann surface and $X_W$ is a degree-$N$ hypersurface within a projective space with $N$ homogeneous coordinates. 
    On the other hand,
    in the LG singularity model, we treat $W$ as a 
    holomorphic function on $\CC^N$. From this perspective, FJRW theory is about solving a generalized 
    PDE attached to $W$ rather than classifying holomorphic maps from a compact Riemann
    surface $\Sigma$ to $\CC^N$. The idea comes from Witten's conjecture 
    \cite{Wi1} stated in the early 90's and soon proven by Kontsevich \cite{Ko}: 
    the intersection theory
    of Deligne and Mumford's moduli of curves
    is governed by the KdV integrable
    hierarchy---\emph{i.e.}~the integrable system corresponding to the $A_1$-singularity.
    Witten generalized Deligne and Mumford's spaces to new moduli spaces governed by
    integrable hierarchies attached to more general singularities.
    To this effect, he considers 
    the PDE
    \begin{equation}\label{eq:pde}
\ol{\partial}s_j+\overline{{\partial_jW}(s_1, \cdots, s_N)}=0,
     \end{equation}
    where $W$ is the same polynomial defining $X_W$
    and $\partial_jW$ is the derivative with respect to the $j$th variable. 
    Faber, Shadrin and Zvonkine proved this conjecture for $A_n$-singularities.
    Fan, Jarvis, and the second author \cite{FJR1, FJR2, FJR3} extended Witten approach to any singularity and 
    genealized the proof of Witten's statement to all simple singularities.
    In this way FJRW theory
    plays the role of Gromov--Witten theory
    on the LG side for any isolated singularity defined by a quasihomogeneous
    polynomial. The Witten equation
    should be viewed as the counterpart to the
    Cauchy--Riemann equation: when we pass to the LG singularity model 
    we replace the linear Cauchy--Riemann equation on a
    nonlinear target with the nonlinear Witten equation on a linear target.

    Three years ago, a program was launched by the authors 
    in order to establish the LG-CY correspondence mathematically.
    Since then, a great deal of progress has been
    made: the proof of classical mirror symmetry statements via the LG model
    (by the authors \cite{CR_AIM} and Krawitz \cite{Kr}),
    the modularity of the Gromov--Witten theory of elliptic orbifold $\PP^1$
    (see Krawitz--Shen \cite{KSh} and work by the second author in collaboration with Milanov \cite{MR}) and the connection to
    Orlov's equivalence (by the authors in collaboration with Iritani \cite{CIR}). In this survey article,
    we report on some of the progress within a common framework and we complement at several points our treatment 
    of the quintic threefold  \cite{ChRu}.

\subsection{LG-CY correspondence and ``global'' mirror symmetry}
So far we have presented the LG-CY correspondence from the point of view of 
the open problem of computing GW theory. 
The framework of mirror symmetry, however,  allows us to
   recast this transition from CY geometry to the LG side within a
   geometric setup involving a wider circle of ideas. This is the main focus of this paper.

Recall that mirror symmetry asserts a duality among CY three-folds
   exchanging the $A$ model invariants with the  $B$ model invariants. Naively,
   the $A$ model contains information such
   as the K\"ahler structure and Gromov--Witten invariants,
   while the  $B$ model contains information
   such as the complex structure and period integrals.
   From a global point of view,
   this picture cannot be entirely satisfactory,
   because the complex moduli space has a nontrivial
   topology while the K\"ahler moduli space does not.

\subsubsection{Cohomological mirror symmetry}
   Let us illustrate this issue by means of
   the example which inspired the whole phenomenon of mirror symmetry \cite{CDGP}.
   On the one side of the mirror we have the quintic three-fold
   \begin{equation}\label{eq:quinticFermatCY}
   X_W=\{x_1^5+x_2^5+x_3^5+x_4^5+x_5^5=0\}{\subset \PP^4}
   \end{equation}
   equipped with a natural holomorphic three-form
   $\omega={dx_1\wedge dx_2\wedge dx_3 }/{x_4^4}$
   (written here in coordinates with $x_5=1$).
   On the other side we take the quotient of $X_W$ by
   the group $G\cong (\ZZ_5)^4$
   spanned by
   $x_i\mapsto \al x_i$ with $\al^5=1$ for all $i=1, \dots, 5$
   subject to the condition that $\omega$ is
   preserved\footnote{In other words, each diagonal transformation
   $\Diag(\al_1\in \pmmu_5,\dots,\al_5\in \pmmu_5)$
   should satisfy $\det=\prod_i \al_i=1$.}.
   The quotient scheme $X_W/G$ is singular;
   but there is a natural, canonically defined, resolution $Y=(X_W/G)^{\rm res}$ which is
   again a CY variety.

   In general the existence of resolutions of CY type is not
   guaranteed. But we can rephrase things in higher generality
   in terms of orbifolds:
   let us mod out $G$ by the kernel of $G\to \Aut(X_W)$, the group
   spanned by the diagonal symmetry $j_W$  scaling all coordinates
   by the same primitive fifth root $\xi_5$.  Then,
   the quotient of $X_W$ by $\wt G=G/\langle j_W\rangle$ equals $X_W/G$
   and the group $\wt G$ acts faithfully.
   In this way,
   the resolution $Y$ may be
   equivalently replaced by the smooth quotient stack (orbifold)
   \begin{equation}\label{eq:mirror}
   X_W^{{\vee}}=[X_W/\wt G]
   \end{equation}
   (a cohomological equivalence between $Y $ and $X_W^{\vee}$ holds under the condition
   that the stabilizers are
   nontrivial only in codimension $2$).

   The odd cohomology (primitive cohomology)
   of $X_W^{\vee}$ is four-dimensional and
   unusually simple: the odd-degree Hodge numbers equal $(1,1,1,1)$ and mirror the
   four hyperplane sections $\pmb 1, H, H^2, H^3$
   of the projective hypersurface $X_W$
   \begin{equation*}\begin{matrix}
h^{p,q}(X_W^{{\vee}})=\quad & &   &1&   &   \\
 & &0  & &0  &   \\
 &0&   &101&   &0  \\
\hspace{2cm} 1& &  1&   &1  & &1\\
 &0&   &101&   &0  \\
 & &0  & &0  &   \\
\quad  & &   &1&   &&&
\end{matrix}\qquad \qquad\qquad\qquad\begin{matrix}\label{eq:quinticdiamond}
h^{p,q}(X_W)=& & &   &1&   &   \\
& & &0  & &0  &   \\
& &0&   &1&   &0  \\
&1& &101& &101& &1.\\
& &0&   &1&   &0  \\
& & &0  & &0  &   \\
\quad & & &   &1&   &&&\end{matrix}
\end{equation*}
Indeed, this is part of the
   cohomological mirror symmetry
   \begin{equation}\label{eq:classMS}
   h^{p,q}(X_W)=h^{\dim -p,q}(X_W^{{\vee}}).
   \end{equation}

\subsubsection{Mirror symmetry at the large complex structure point}
   We further illustrate mirror symmetry for this example with special attention to
   the difference in global geometry between the two sides.
   On one side of the mirror, for $X_W$, we consider the
   (complexified) K\"ahler moduli space --- a \emph{contractible} one-dimensional complex
   space which should be
   regarded as an $A$ side invariant
   $$\Acal(X_W).$$
   On the other side of the mirror we consider a $B$ model invariant: the
   (complex structure) deformations of $[X_W/\wt G]$. These are
   actually deformations of $X_W$ preserved by the action
   of $\wt G=G/\langle j_W\rangle$. We get the Dwork family
   $$X_{W,\psi}=\left\{x_1^5+x_2^5+x_3^5+x_4^5+x_5^5+5\psi\prod_{i=1}^5 x_i=0\right\},$$
   on which $\wt G$ operates by preserving the fibres and the form
   $\omega_{\psi}={dx_1\wedge dx_2\wedge dx_3 }/({x_4^4-\psi x_1x_2x_3})$
   yielding a family of CY
   orbifolds $X^{\vee}_{W,\psi}$ over an open subset of $\PP^1_\psi$
   (the complement of the divisor
   where singularities occur).
   In fact,  for $\al^5=1$,  we can
   let the  diagonal symmetry $x_i\mapsto \al x_i$
   operate on the family so that the action identifies the fibre $X^{\vee}_{W,\psi}$
   over
   $\psi$ with the isomorphic fibre $X^{\vee}_{W,\al\psi}$
   over $\al \psi$. Therefore,
   the Dwork family is ultimately a family of three-dimensional CY orbifolds over
   $[\PP^1/\ZZ_5]$. Write $t=\psi^5$;
   then the new family is regular off
   $t=\infty$ and $t=1$.
   These limit points alongside with the stack-theoretic point $t=0$
   are usually referred to as \emph{special limit points}; more precisely,
   $0,\infty,$ and $1$
   are referred to as the \emph{Gepner point}, the \emph{large complex structure point},
   and the
   \emph{conifold point}. Unlike the K\"ahler moduli space, this moduli space of
   complex structures is \emph{not contractible}. For this reason, mirror
   symmetry has been studied as an identification
   between the above contractible K\"ahler moduli space $\Acal(X_W)$ and a
   contractible neighborhood of the large complex structure
   point $t=\infty$
   $$\Bcal(X^{{\vee}}_{W,\infty}).$$ This leads to a formulation
   of mirror symmetry as a local statement matching the $A$ model to the  $B$ model
   restricted to a neighborhood of the large complex structure point.
   Consider the bundle over $\Bcal(X_{W,\infty}^{{\vee}})$ minus the origin with
   four-dimensional
   fibre $H^3 (X_{W,t}^\vee , \CC)$
   over $t \in \Bcal(X_{W,\infty}^{{\vee}})$.
   There is, of course, a flat connection, the Gauss--Manin connection,
   given by the local system $H^3(X_{W,t}^{\vee} , \ZZ) \subset H^3(X_{W,t}^{\vee} , \CC)$.
   Dubrovin has shown how to use Gromov--Witten invariants
   to put a flat connection
   on the four-dimensional bundle with
   fibre $H^{\ev}(X_{W})$ over $\Acal({X_W})$.
   Under a suitable identification (mirror map)
\begin{equation}\label{eq:LCLP_MS}
 \xymatrix@C=1.1cm{
\Bcal(X_{W,\infty}^{{\vee}})
\ar[d]^{\cong}
\\
\Acal(X_W)\ar[u]
}
\end{equation}
   the two structures are identified (Givental \cite{Gi}, Lian--Liu--Yau \cite{LLY}).
   This local point
   of view dominated the mathematical study of mirror symmetry for the last twenty years.

\subsubsection{Global mirror symmetry}
   It is natural to extend our study to
   the entire moduli space $[\PP^1/\ZZ_5]$
   and to all the special limits.
   Such a {\em global} point of view underlies a large part of the
   physics literature on the subject and
   leads naturally
   to the famous holomorphic anomaly equation \cite{BCOV}
   and, in turn, to the above
   mentioned spectacular physical predictions \cite{HKQ} on
   Gromov--Witten invariants of  the quintic three-fold up to genus $52$.
   In the early 90's, a physical solution was proposed to
   complete the K\"ahler moduli space by
   including other \emph{phases} \cite{MO, Wi93b}.
   As we shall illustrate, for the quintic three-fold,
   two phases arise in the
   $A$ model: the CY geometry and the LG phase.
   Whereas the CY geometry of the quintic has already
   been identified by mirror symmetry to a neighborhood
   of the large complex structure limit
   point $\Bcal(X_{W,\infty}^{\vee})$,
   the LG phase is expected to be mirror
   to the neighborhood of the Gepner point at $0$
   $$\Bcal(X_{W,0}^{{\vee}}).$$
   Then, the LG-CY correspondence can be interpreted as
   an analytic continuation from
   the Gepner point to the large complex structure point.
   From this point of view, the LG-CY correspondence should
   be viewed as a step towards global mirror symmetry.

   From a purely mathematical point of view
   it may appear difficult to make sense of
   such a transition of the CY quintic three-fold
   into a different ``phase''. Fortunately, Witten has illustrated this
   in precise mathematical terms as a variation of stability conditions in geometric invariant theory, \cite[\S4]{Wi93b}.
   Let us consider the explicit example of the Fermat quintic Calabi--Yau three-fold: let $Y=\CC^{6}$ with coordinates $x_1,\dots,x_5$  and  
   $p$  and let   $\CC^*$ act  as  $$x_i\mapsto \la x_i, \ \forall i;\qquad \qquad p\mapsto \la^{-5}p.$$
   The presence of nonclosed orbits prevents us from defining a geometric quotient.
 In order to obtain a geometric quotient, one should necessarily restrict to 
open $\CC^*$-invariant subsets $\Omega$ of $V\cong \CC^6$
   for which $\Omega/\CC^{*}$ exists.
   The geometric invariant theory (GIT) yields two maximal possibilities: 
   the sets $\Omega_1= \{\pmb x\neq 0\} $  yielding $\Ocal(-5)$ as a quotient by $\CC^*$ and
   the set $\Omega_2=\{p\neq 0\}$ yielding the
   orbifold $[\CC^5/\ZZ_5]$.
   If one adds to the picture
   a $\CC^*$-invariant holomorphic function such as
   $\wt W(p,x_1,\dots,x_5)=pW(x_1,\dots,x_5)=p\sum_i x_i^5$
   the two geometric models ultimately reduce
   to the Fermat quintic $X_W$ and to the singularity at the origin of
   \begin{equation}\label{eq:LGphase}
W=\sum_i x_i^5\colon [\CC^5/\ZZ_5]\longrightarrow \CC.
   \end{equation}

\medskip

   On $\Bcal(X_{W,0}^{{\vee}})$ consider the bundle with
   fibre $H^3 (X_{W,t}^{\vee}, \CC)$ over the point $t$.
   There is again the flat Gauss--Manin connection induced
   by the local system $H^3(X_{W,t}^{\vee} , \ZZ)
   \subset H^3(X_{W,t}^{\vee} , \CC)$.
   The work of Fan, Jarvis, and the second author \cite{FJR1} yields ---
   via Dubrovin connection ---
   a flat connection on a vector bundle on a contractible one-dimensional
   space $$\Acal(W, \ZZ_5)$$
   attached to \eqref{eq:LGphase}. (For the abstract formalism
   of Dubrovin connection we refer to Iritani \cite{Ir}.)
   The fibre of this bundle is a four-dimensional
   state space attached to the singularity $W\colon [\CC^5/\ZZ_5]\to \CC$
   (see \S\ref{subsect:states}).
   Under a suitable identification (mirror map)
\begin{equation}\label{eq:GepnerMS}
 \xymatrix@C=1.1cm{
\Bcal(X_{W,0}^{{\vee}})
\ar[d]^{\cong}
\\
\Acal(W,\ZZ_5)\ar[u]
}
\end{equation}
   we match the two structures in \cite{ChRu}.
The LG-CY correspondence can be now carried out via \eqref{eq:LCLP_MS} and
   \eqref{eq:GepnerMS}
   on the $B$ side via the local system induced by
   the family of CY orbifolds $X_{W,t}^{\vee}$ with $t$ varying in
   $(\PP^1)^\times=\PP_t^1\setminus\{0,1,\infty\}.$ Consider Figure \ref{fig:picture},
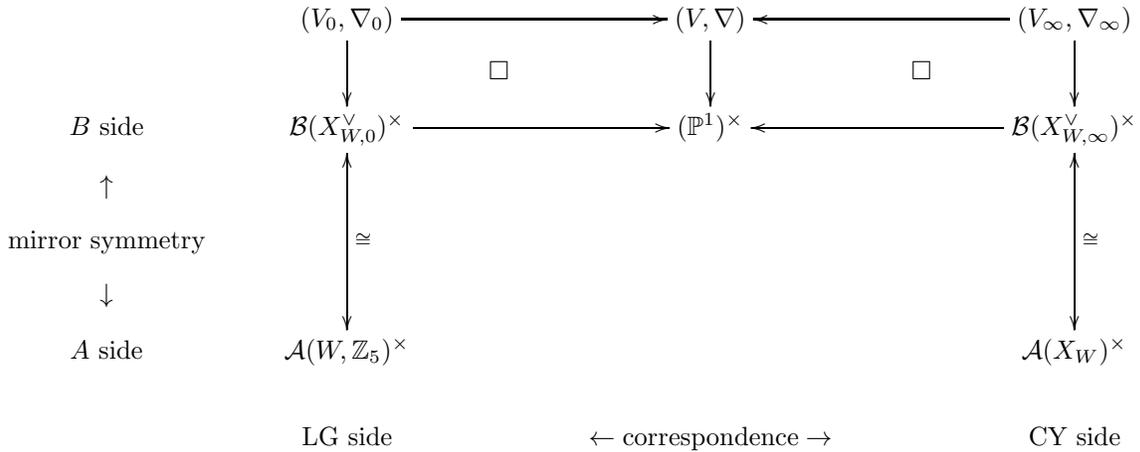
\begin{figure}[h]
\begin{equation*}
  \xymatrix@R=.2cm{
&(V_0,\nabla_0)\ar[dd] \ar[rr] && (V,\nabla)\ar[dd]&&\ar[ll] (V_\infty,\nabla_\infty)\ar[dd] \\
&&\square& &\square&\\
\text{$B$ side}&\Bcal(X_{W,0}^{{\vee}})^\times\ar[rr]
\ar[dddd]^{\cong} &&(\PP^1)^\times&&
\Bcal(X_{W,\infty}^{{\vee}})^\times\ar[ll]
\ar[dddd]^{\cong}
\\
\uparrow&\\
{\text{mirror symmetry}}&\\
\downarrow&\\
\text{$A$ side}&\Acal(W,\ZZ_5)^\times\ar[uuuu] &&&& \Acal(X_W)^\times\ar[uuuu]\\\\
& \text{LG side} &&\leftarrow\text{correspondence}\rightarrow && \text{CY side}
}
\end{equation*}
\caption{Casting LG-CY correspondence within the global mirror symmetry framework.}
\label{fig:picture}
\end{figure}
where the notation $(\ \ )^\times$ stands for $(\ \ )\setminus\{\text{special points $0$, $1$ and $\infty$}\}$,
the horizontal maps
to $(\PP^1)^\times$ are the natural inclusions, and $V$,
$V_0$ and $V_\infty$ are the four-dimensional bundles
with fibre $H^3(X_{W,t}^{\vee},\CC)$ equipped with the respective
Gauss--Manin connections $\nabla$.

\newpage

On the CY side, \emph{i.e.}~on $\Acal(X_W)^\times$, the isomorphism \eqref{eq:LCLP_MS}
and the study of the variation of the Hodge structure of $X_{W,t}^{\vee}$ on
$\Bcal(X_{W,\infty})^\times$
allow us to associate to a given basis of $H^{\ev}(X_W)$ a basis of
multivalued functions from $\Acal(X_W)^\times$ to $H^{\ev}(X_W)$ which are flat with respect to
Dubrovin connection. This  amounts to solving Gromov--Witten theory for
$X_W$ in genus zero.
The analogous problem holds on
$\Acal(W,\ZZ_5)^\times$ on the LG side; it is solved
via \eqref{eq:GepnerMS} by computing FJRW theory for $(W,\ZZ_5)$ in
genus zero.
Furthermore, via analytic continuation, we can extend
the bases of flat sections globally on $(\PP^1)^\times$
and find a change of bases matrix
   \begin{equation}\label{eq:LGCYsympl}
   \mathbb U_{\text{\rm LG-CY}}.\end{equation}
This is explicitly computed in \cite{ChRu} for the Fermat quintic and
is independent of the base parameter $t$ on $(\PP^1)^\times$.
In \cite{CIR} we provide a geometric interpretation in terms of
Orlov's equivalence relating the bounded derived category of coherent sheaves on 
$X_W$ and the category of matrix factorizations attached to $(W,\ZZ_5)$ (see \S\ref{subsect:CIR}).
This point of view is interesting
in its own right, because it short-circuits
mirror symmetry in the diagram
and provides a conceptual explanation to
   the fact that $\mathbb U_{\text{\rm LG-CY}}$ is symplectic (a crucial fact for
   Conjecture \ref{conj:LGCY}).  We refer to \S\ref{subsect:CIR}.

This paper focuses on the generalization of Figure \ref{fig:picture}.
Indeed, the richness of the mirror symmetry picture calls for generalizations in
several directions and makes the whole process of figuring out all sides of
the story very entertaining: some
corners of Figure \ref{fig:picture}  can be more rapidly understood and shed light on
entire the picture.
For instance, there are cohomological
aspects that can be treated
in very high generality; \emph{e.g.}~we provide results
applying to finite group quotients of
CY hypersurfaces in weighted projective spaces (see Section \ref{sect:classMS}).
On the other hand,
the \emph{quantum} cohomological aspects are intrinsically more difficult; there,
the results are almost exclusively
limited to certain well behaved ambient spaces (Gorenstein weighted
projective spaces); in this way, the purely cohomological results can be used to formulate conjectures.

   \subsection{Structure of the paper}
   In Section \ref{sect:FJRW}, we will review Fan--Jarvis--Ruan--Witten (FJRW) theory
   which plays a crucial role in these recent developments.
   In Section \ref{sect:classMS} we will generalize point \eqref{eq:classMS}
   above discussing
   results of Berglund, H\"ubsch and Krawitz on LG phases and of the authors on LG-CY
   cohomological correspondence.
   In Section \ref{sect:genuszeroLGCY} we focus on the quantum
   counterpart of these theorems; \emph{i.e.}~we provide correspondences
   involving the enumerative geometry of curves.
   This section is structured in four parts.
   We will state the LG-CY conjecture and review recent results
   \S\ref{subsect:LGCY}.
   Then, we will cast it in a global mirror
   symmetry framework in \S\ref{subsect:globalMS}.
   We will present in \S\ref{subsect:CYP1}
   a result going very far in providing evidence for this
   global mirror symmetry framework in higher genus \cite{KSh} \cite{MR}.
   Finally, in \S\ref{subsect:CIR} we will provide
   an independent
   interpretation
   of the LG-CY correspondence via Orlov's equivalence (work in collaboration with Iritani).

\subsection{Acknowledgements}
     We wish to thank Tom Coates, Huijun Fan, Tyler Jarvis, and Dimitri 
     Zvonkine for the collaborations
     leading to the current project. Special
     thanks go to Mina Aganagic, Kentaro Hori,
     Albrecht Klemm, and Edward Witten for many
     insightful conversations which greatly influenced this paper.
     Finally we would like to express our gratitude to the
     Institut Fourier, Universit\'e de Grenoble, for
     making this work possible by providing the second author
     with hospitality and excellent working conditions
     and by giving us the opportunity to contribute to
     the present special volume of the Annales de l'Institut Fourier.
     This paper was finalized during the first author's 
     visit at
     the University of Kyoto funded by a JSPS fellowship; 
     a special thank goes to these institutions and in particular to 
     Hiroshi Iritani for 
     the collaboration on this project. 
     We are grateful to the anonymous referee for his time 
     and effort in proving comments which greatly improved this paper.

\section{Fan--Jarvis--Ruan--Witten theory} \label{sect:FJRW}
In this section, we review Fan, Jarvis, and Ruan's construction
    of  ``quantum'' singularity theory based on Witten's
    partial differential equation \eqref{eq:pde}.
The treatment given here is more general than that appearing in \cite{ChRu}  and complements 
\cite{FJR1}. 
    The theory provides us with the LG side of the correspondence.
    The input for the theory is a pair $(W, G)$ where $W$ is a ``nondegenerate''
    quasihomogeneous polynomial $W\colon \CC^N\rightarrow \CC$
 and $G$ is a group of diagonal symmetries of $W$.

We say that $W\colon \CC^N\to \CC$
is quasihomogeneous (or weighted homogeneous) polynomial
of type $(q_1,\dots,q_N)$ for $q_j \in \QQ_{>0}$ if the following condition is satisfied.
Let $W=\sum_{i=1}^s \gamma_i \prod x_j^{m_{i,j}}$
with $m_{i,j}\in \ZZ$, $m_{i,j}\ge 0$ and $\gamma_i\neq 0$; then $\sum_{j=1}^N m_{i,j}q_j=1$.
Equivalently, with a slight abuse of notation, we write
$$W(\lambda^{q_1}x_1, \dots, \lambda^{q_N}x_N)=\lambda W(x_1,
\dots, x_N)$$
and we refer to $q_1,\dots,q_N$ as the \emph{charges} of $W$.
The polynomial $W$ is \emph{nondegenerate} if: (1) $W$
defines a unique singularity at zero; (2) the choice of $q_1,\dots, q_N$
is unique.

An element $g\in GL(\CC^N)$ is a \emph{diagonal symmetry
    of $W$} if $g$ is a diagonal matrix of the
    form $\Diag(\lambda_1, \dots, \lambda_N)$ such that
    $$W(\lambda_1 x_1, \dots, \lambda_N x_N)=W(x_1, \dots,
    x_N).$$
    We will use $\Aut(W)$ to denote the group of all diagonal symmetries and
    we will refer to it as the maximal group of diagonal symmetries.
    It is easy to see that this group is finite (see for
    instance \cite{FJR1}).
    The group is also nontrivial since it contains the element
    $j_W=\Diag(e^{2\pi \cxi q_1}, \dots, e^{2\pi \cxi q_N})$.

FJRW theory applies to a pair
    $(W, G)$, where $G\subseteq \Aut(W)$. Two conditions will
    naturally arise in the rest of the paper; their role is specular in the sense
    of mirror symmetry.
    We will say that $G\subseteq \Aut(W)$ is \emph{$A$-admissible} if
    $j_W$ is contained in $G$. We will say that it is  \emph{$B$-admissible}
    if $G\subseteq SL(\CC^N)$; \emph{i.e.}~if $G$ is included in
    $SL_W=SL(\CC^N)\cap \Aut(W)$.

\subsection{$A$ model state space}\label{subsect:states}
The state space was introduced by Fan, Jarvis, and Ruan
\cite{FJR1, FJR2, FJR3} as part of the moduli theory of the Witten
equation. We provide a
purely mathematical definition, independent from the mirror symmetry motivation.

\subsubsection{Lefschetz thimbles from the classical point of view}
Consider $W\colon \CC^N\rightarrow \CC$. Let us recall
some important facts on the
relative homology of $(\CC^N, W^{-1}(S_M^+))$
where $S_M^+$ is the half-plane $\{z\in \CC\mid {\rm Re}z> M\}$ for $M>0$.
We denote it by $$H_N(\CC^N,W^{+\infty};\ZZ)$$
with $W^{+\infty}=W^{-1}(S_M^+)$ and,
by abusing notation, we refer  to it as the {\em space of Lefschetz thimbles}.
\begin{rem}
Due to the nondegeneracy condition, the origin is the only critical point of $W$, and
$W$ is a fibre bundle on $\CC^{\times}$; for $N>1$, since $\CC^N$ is contractible,
we can regard the above relative cohomology as the
homology (with compact support) of rank $N-1$ of
the fibre over a point
of $S^+_M$ (\cite[1.1]{Pham}).
\end{rem}
\begin{rem}\label{rem:Gspace}
Standard arguments (\cite[ch. 2]{AGV} and \cite[(5.11)]{Loo}) show that
the space of Lefschetz thimbles is freely generated by as many
generators $\mu$ as the complex dimension of the local algebra $\Qcal_W$.
Furthermore,
due to \cite{OS} and \cite{Wa1}, for any $G\subseteq \Aut(W)$,
its dual $\Hom(H,\CC)$ is isomorphic
\emph{as a $G$-space} to
$dx_1\wedge \dots\wedge dx_N\cdot \Qcal_W$
(where a diagonal symmetry $g=\Diag(\la_1,\dots,\la_N)$ acts
on  $dx_1\wedge \dots\wedge dx_N$ by
multiplication by $\prod_j \la_j$).
\end{rem}
\begin{rem}
A nondegenerate pairing can be defined in the
following sense. Following \cite[\S8, Step 2]{Hertling} and \cite{Pham},
we consider the relative homology
$$H_N(\CC^N,W^{-\infty};\ZZ),$$
where $W^{-\infty}$ denotes $W^{-1}(S^-_M)$ and
$S^-_M$ is the half-plane $\{z\in \CC\mid {\rm Re}z<-M\}$ for $M>0$.
The intersection form for Lefschetz thimbles with boundaries in
$W^{+\infty}$ and in $W^{-\infty}$
gives a well defined nondegenerate pairing
\begin{equation}\label{eq:prepairing}
P\colon
H_N(\CC^N,W^{+\infty};\ZZ)\times H_N(\CC^N,W^{-\infty};\ZZ) \longrightarrow \ZZ.
\end{equation}
\end{rem}

\subsubsection{The state space of $(W,G)$}
In our setup the above facts can be used to
define the state space as the space of Lefschetz thimbles for
the \emph{stack-theoretic} map
$$W\colon [\CC^N/G]\longrightarrow \CC$$
where $G$ is an $A$-admissible group (\emph{i.e.}~a group of diagonal symmetries
containing $j_W$).
The quasihomogeneity condition yields a state space
naturally equipped with a nondegenerate \emph{inner} pairing.

Let us define the state space first; for the scheme-theoretic  morphism $W\colon \CC^N\to \CC$
it is natural to consider
the relative cohomology $H^*(\CC^N,W^{+\infty})$ which is
concentrated in degree $N$ and  dual to the above space of Lefschetz thimbles.
Since $[\CC^N/G]$ is a stack, and the loci
$W^{+\infty}$ and $W^{-\infty}$ (preimages of $S_M^+$ and
$S_M^-$) are substacks, the suitable
cohomology theory for this setup
is orbifold cohomology (or Chen--Ruan cohomology).
Indeed Chen--Ruan
cohomology admits a natural relative version $$H^{a,b}_{\CR}(U,V)=
\bigoplus_{g}
H^{a-\age(g),b-\age(g)}(U_g,V_g;\CC)^G,$$ where, in
complete analogy with the standard definition of
Chen--Ruan cohomology, the
union runs over the elements of the stabilizers of the stack $U$
(\emph{i.e.}, in our case, the elements of $G$),
the notation $U_g$ and $V_g$ stands for
the stacks where the automorphism $g$ persists,
and $\age(g)$
denotes the age\footnote{We define $\age(\al,V)\in \QQ$
for any finite order
autormorphism $\al$ of a vector space $V$, or --- equivalently
---
for any representation of $\pmmu_r$ for some $r\in \NN$.
Each character $\chi\colon \pmmu_r \to \CC^*$ is of the form
$t \to t^k$ for a unique integer $k$ with $0 \le k \le r - 1$
and, for these representations, we
define the age of $\chi$ as $k/r$.
Since these characters form a
basis for the representation ring of $\pmmu_r$, this extends to a unique additive
homomorphism which we denote by
$\age\colon R\pmmu_r \to \QQ.$} of $g$
acting on the normal bundle of $U_g$
in $U$.
\begin{defn}[$A$ model state space]\label{defn:Astates}
 For any $A$-admissible group $G$, we set
$$\Hcal^{a,b}_{W,G}:=H^{a+q,b+q}_{\CR}([\CC^N/G],W^{+\infty}) \qquad \qquad q=\textsum_jq_j.$$
\end{defn}
\begin{rem}\label{rem:narrowfirst}
The above state space is the direct sum of two spaces: the image and the kernel of
$$i_*\colon H_{\CR}^*([\CC^N/G],W^{+\infty})\to H_{\CR}^*([\CC^N/G])$$
The image of $i_*$ is isomorphic in $\Hcal_{W,G}$
to classes attached to diagonal
symmetries $g$ \emph{fixing only the origin}; these
are \emph{narrow} states (in Section \ref{sect:curves}, Remark \ref{rem:narrow},
we see how these states arise in the geometry of curves and we motivate the 
terminology ``narrow'' from this different viewpoint).
A special case of narrow state is the fundamental class attached to
$j_W$: since $j_W$ fixes only the origin this class is narrow, and --- by
construction --- its degree vanishes. Such a state plays the role of the
unit of $\Hcal_{W,G}$ once the ring structure is set up (see \eqref{eq:quant_prod}).
The complementary space of the space of narrow states,
\emph{i.e.}~the kernel of $i_*$, is  referred to in \cite{FJR1} as
the space of \emph{broad} states. These are classes attached to diagonal
symmetries fixing a nontrivial subspace of $\CC^N$.
\end{rem}

\begin{rem} By making the above definition explicit we may regard
the state space as the direct sum over the elements $g\in G$
of the $G$-invariant cohomology classes
\begin{equation}\label{eq:decompstates}
\Hcal_{W,G}=\bigoplus_{g\in G} H^{N_g}(\CC^N_g,W^{+\infty}_g;\CC)^G,
  \end{equation}
where $N_g$ is the number of coordinates $x_1,\dots,x_N$
fixed by $g$
and $\CC^N_g$ and $W^{+\infty}_g$ denote the
subspaces of $\CC^N$ of $W^{+\infty}$ which are
fixed by $g$. In these terms
narrow states are spanned by the summands satisfying $N_g\neq 0$.
Recall the subspace of $G$-invariant classes within $H^{N_g}(\CC^N_g,W^{+\infty}_g)$
is included in the subspace of $j_W$-invariant classes;
this insures that $H^{N_g}(\CC^N_g,W^{+\infty}_g;\CC)^G$
is equipped with a pure Hodge structure of weight $N_g$; in this way
each class has bidegree $(p,N_g-p)$ in standard cohomology
and, within $\Hcal_{W,G}$  has bidegree $$(\deg_A^+,\deg_A^-)
=(p,N_g-p)+(\age(g),\age(g))-(q,q)
\qquad \qquad (\text{with }q=\textsum q_j).$$
We will usually write $\Hcal_{W,G}^{a,b}$ for the terms of bidegree $(a,b)$
and  $\deg_A$ for the \emph{total} degree $a+b$.
\end{rem}

\subsubsection{The inner pairing}
We now define the nondegenerate inner pairing.
The crucial fact is that the quasihomogeneity of the map $W$ allows us
to define an automorphism
$$I\colon [\CC^N/G]\to [\CC^N/G]$$
exchanging $[W^{+\infty}/G]$ with $[W^{-\infty}/G]$.
Indeed we can set
$$I(x_1,\dots,x_N)=(e^{\pi\cxi q_1},\dots,e^{\pi\cxi q_N}) \qquad \text{for which} \qquad
W(I(x_1,\dots, x_N))=-W(x_1,\dots, x_N).$$
Recall that automorphisms of $[\CC^N/G]$ are defined up to
natural transformation
(composition with elements of $G$).
The automorphism
 $I$ induces the nondegenerate \emph{inner} pairing
\begin{align*}\langle  \cdot, \cdot \rangle \colon
H^N(\CC^N,W^{+\infty};\CC)^G\times H^N(\CC^N,W^{+\infty};\CC)^G &\longrightarrow \CC\\
(\al,\be)\qquad \qquad\qquad \qquad&\mapsto\ \  P(\al,I^*\be)
\end{align*}
via \eqref{eq:prepairing} and passage to cohomology.
Notice that $I$ is defined up to a natural transformation; since we are working
with $G$-invariant cohomology classes this still yields a well defined
pairing.

There is an obvious identification $\varepsilon$ between
$H^{N_g}(\CC^N_g,W^{+\infty}_g;\CC)^G$
and $H^{N_h}(\CC^{N_h},W^{+\infty}_h;\CC)^G$ as soon as $g=h^{-1}$ in $G$.
This allows us to define a nondegenerate pairing between
these two spaces
via $\langle \cdot, \cdot\rangle_g =\langle \cdot, \varepsilon(\cdot)\rangle$
and, in turn, a nondegenerate pairing \emph{globally} on $\Hcal_{W,G}$.
\begin{defn}[pairing for $\Hcal_{W,G}$]\label{defn:Apairing}
We have a nondegenerate inner product
$$\langle \cdot,\cdot\rangle \colon \Hcal_{W,G}\times \Hcal_{W,G}\to \CC$$
pairing $\Hcal_{W,G}^a$ and
$\Hcal^{2\widehat c_W-a}_{W,G}$ for
$$\qquad \qquad\widehat c_W=N-2q=\textsum_j (1-2q_j)
\qquad \qquad \text{(\emph{central charge)}}.$$
\end{defn}
The above formula follows from the well known relation
$\age(g)+\age(g^{-1})=N-N_g$ from Chen--Ruan cohomology
and the overall shift by $q$ in Definition \ref{defn:Astates}; it
shows that the state space behaves like the cohomology of
a variety of complex dimension
$\widehat c_W$. Under the CY condition
$\sum_j q_j=1$, this equals $N-2$; \emph{i.e.},
precisely the dimension of a weighted projective hypersurface,
see Theorem \ref{thm:LGCYstates}.

\subsection{The moduli space} \label{sect:curves}
The relevant moduli space is also defined starting from
the pair $(W,G)$ with $G$ $A$-admissible.
\subsubsection{The moduli stack associated to $W$}
The first step is the definition of a moduli stack $W_{g,n}$
attached to the nondegenerate polynomial
\begin{equation}\label{eq:Wnotn}
W=\sum_{i=1}^s \gamma_i\prod_j x_j^{m_{i,j}}.
 \end{equation}
We provide and elementary definition, simplifying
that of \cite{FJR1} (see Remark \ref{rem:comp}).
The moduli stack $W_{g,n}$
is an \'etale cover of a
compactification of the usual moduli stack
of curves $\mathcal M_{g,n}$.
Set
$$\delta=\exp(\Aut(W));$$
\emph{i.e.}, the
exponent of the group $\Aut(W)$ (the smallest integer $\delta$
for which $g^\delta=1$ for all $\delta\in \Aut(W)$).
A \emph{$\delta$-stable} curve is a proper and geometrically connected
orbifold curve (or twisted curve in the sense of Abramovich and Vistoli)
with finite automorphism group,
stabilizers of order $\delta$
only at the nodes
and at the markings, and trivial stabilizers elsewhere.
The stack $\MMM_{g,n,\delta}$
of $\delta$-stable curves is a smooth and proper Deligne--Mumford stack
which differs only slightly from the usual $\MMM_{g,n}$ (see \cite{chstab}).
The advantage of working
over $\MMM_{g,n,\delta}$ is that $W_{g,n}$ can be regarded as an \'etale and proper cover
of $\MMM_{g,n,\delta}$.
\begin{defn}
 \label{defn:Wcurve}
On a $\delta$-stable curve $C$, a \emph{$W$-structure} is the datum
of $N$ $\delta$th roots
$$\big(L_j\ ,\  \fie_j\colon L_j^{\otimes \delta}\xrightarrow{\ \sim\ }
\omega_{\log}^{\otimes \delta q_j}\big)_{j=1}^N$$
(as many as the variables of
$W$) satisfying the following $s$ conditions
(as many as the monomials
$W_1,\dots, W_s$).
For each $i=1,\dots, s$
and for $W_i(L_1,\dots, L_N)=\bigotimes_{j=1}^N L_j^{\otimes m_{i,j}}$,
the condition
\begin{equation}\label{eq:conditions}
\qquad \qquad W_i(L_1,\dots,L_N)\cong\omega_{\log}
\end{equation}
holds.
A $\delta$-stable curve equipped with a $W$-structure is called an
\emph{$n$-pointed genus-$g$ $W$-curve}. We denote by $W_{g,n}$ their moduli stack
\[ \xymatrix@C=1.1cm{W_{g,n} := \Bigg \{(L_1,\fie_1),\dots, (L_N,\fie_N)\ar
[r]_{\hspace{1.2cm}\text{roots}}&{C\ni \sigma_1,\dots,\sigma_n}\ar[r]_{\hspace{0.80cm}\text{curve}}
& X &W_i(L_1,\dots, L_N)\cong \omega_{\log}\Bigg \}_{/\cong}.\\
}
\]
\end{defn}

\begin{rem}\label{rem:group}
If we replace $\omega_{\log}$ by the trivial line bundle $\Ocal$
in \eqref{eq:conditions}, we obtain a different moduli stack
$W_{g,n}^0$ which also deserves special attention (see Theorem \ref{thm:torsor}).
\end{rem}

\begin{rem} Since $j_W$ is in $\Aut(W)$,
it is automatic that $\delta q_j$ is integer. On the other hand,
the exponent $\delta$ of $\Aut(W)$ is not the order $\abs{j_W}$ of $j_W$.
As a counterexample consider the $D_4$ singularity
$x^3+xy^2$: the order of $j_W$ is $3$ but the exponent $\delta$ is $6$.
\end{rem}

\begin{rem}
It is straightforward to see that $W_{g,n}$ is a proper and \'etale cover
of the proper moduli stack $\MMM_{g,n,\delta}$.
Let us introduce the following notation.
Given an $m$-tuple of line bundles $\vec E=(E_1,\dots,E_m)$ and
an $n\times m$ matrix $A=(a_{i,j})$ we denote by $A\vec E$ the $n$-tuple of line bundles
\begin{equation}\label{eq:matrixnotn}
A\vec E=(\otimes_j E_i^{\otimes a_{i,j}})_{i=1}^n.
 \end{equation}
A similar notation holds for an
$m$-tuple of isomorphisms of line bundles
$\vec f=(f_1,\dots,f_m)\colon \vec E \to \vec F$;
we write $A\vec f$ for the $n$-tuple of isomorphisms of line bundles
$(\otimes_j f_i^{\otimes a_{i,j}})_{i=1}^n$
from $A\vec E$ to $A\vec F$.

With this notation we may rephrase the definition of $W_{g,n}$.
Consider the $N$ roots $(L_j,\fie_j)_j$ as a pair of vectors as above
$(\vec L, \vec\fie)$ and define $E_W$ as the matrix $(m_{i,j})$ from \eqref{eq:Wnotn}.
Then,
$M\vec \fie$ is an $s$-tuple of isomorphisms
$M\vec \fie\colon E_W\vec L^{\otimes \delta}\to E_W(\omega_{\log}^{\otimes \delta q_1},
\dots,\omega_{\log}^{\otimes dq_N})^t$ identifying
the $\delta$th tensor powers
$W_i(L_1,\dots,L_N)^{\otimes  \delta}$ to $\omega_{\log}^{\otimes \delta}$.
Hence, it is automatic that
$W_i(L_1,\dots,L_N)\otimes \omega_{\log}^\vee$ is $\delta$-torsion and
the stack $W_{g,n}$
is merely the open and closed substack where
such a line bundle is actually \emph{trivial}.
This is an open and closed condition within a fibred product of categories
of $\delta$th roots. Since
the stacks of $\delta$th roots of a given
line bundle have been shown in \cite{chstab} to be
proper and \'etale over $\MMM_{g,n,\delta}$, the following
theorem follows.
\end{rem}

\begin{thm}\label{thm:torsor}
Let $W$ be a nondegenerate quasihomogeneous polynomial of
type $(q_1,\dots,q_N)$.

\begin{enumerate}
\item
The stack $W_{g,n}$ is nonempty if and only if
$n>0$ or $\abs{j_W}$ divides $2g-2$.
It is a proper, smooth, $3g-3+n$-dimensional
Deligne--Mumford stack; more precisely, it is
\'etale over $\MMM_{g,n,\delta}$ which is a proper
and smooth stack of dimension $3g-3+n$.

\item The stack $W^{0}_{g,n}$ (see Remark \ref{rem:group})
carries a structure of a group over
the stack of genus-$g$ $n$-pointed
$\delta$-stable curves $\MMM_{g,n,\delta}$
with composition law
\[
\xymatrix
@R=0.3cm
{
W^0_{g,n} \times_{\delta} W^0_{g,n}\ar[r]& W^0_{g,n},
}
\]
where $\times_{\delta}$ denotes the fibred product over $\MMM_{g,n,\delta}$. The degree of
$W^0_{g,n}$ over $\MMM_{g,n,\delta}$ is equal
to $\abs{\Aut(W)}^{2g-1+n}/\delta^N$ for $n>0$ and
$\abs{\Aut(W)}^{2g}/\delta^N$ for $n=0$.
\item
The stack $W_{g,n}$
is a torsor under the group stack ${W}^0_{g,n}$
over $\MMM_{g,n,\delta}$.
In particular, its degree over $\MMM_{g,n,\delta}$ equals that of
$W^0_{g,n}$ over
$\MMM_{g,n,\delta}$.
We have a surjective \'etale
morphism and an action
\[\xymatrix@R=0.3cm
{W_{g,n} \ar[dd]
&& \\
&&W^0_{g,n} \times_{\delta} W_{g,n}\ar[r]& W^c_{g,n}\ \ .\\
\MMM_{g,n,\delta}\ }
\]\qed
\end{enumerate}
\end{thm}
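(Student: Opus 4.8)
The plan is to obtain all three parts from the theory of stacks of $\delta$th roots on $\delta$-stable curves worked out in \cite{chstab}, viewing $W_{g,n}$ as an open and closed substack of a fibred product of root stacks and $W^0_{g,n}$ as its ``linearisation''. As explained in the remark preceding the statement, for each $j=1,\dots,N$ the pairs $(L_j,\fie_j\colon L_j^{\otimes\delta}\xrightarrow{\sim}\omega_{\log}^{\otimes\delta q_j})$ form a stack $\RRR_j$ of $\delta$th roots of a \emph{fixed} line bundle (here $\delta q_j\in\ZZ$ since $j_W\in\Aut(W)$), which by \cite{chstab} is proper and \'etale over the proper, smooth, $(3g-3+n)$-dimensional stack $\MMM_{g,n,\delta}$; hence so is $\RRR_1\times_\delta\cdots\times_\delta\RRR_N$, and inside it $W_{g,n}$ is carved out by the conditions $W_i(L_1,\dots,L_N)\cong\omega_{\log}$, which is an open and closed condition because the isomorphisms $M\vec\fie$ already show $W_i(L)\otimes\omega_{\log}^\vee$ to be $\delta$-torsion. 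Thus $W_{g,n}$ inherits all the stated structural properties, and only the nonemptiness criterion of part (1) needs a separate argument.

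For that criterion, note first that the image of $W_{g,n}\to\MMM_{g,n,\delta}$ is open (the map is \'etale) and closed (the map is proper and the target separated), so by connectedness of $\MMM_{g,n,\delta}$ the stack $W_{g,n}$ is nonempty if and only if it has a point over some, hence any, $\delta$-stable curve; it then suffices to do the same degree bookkeeping as for classical $r$-spin structures in \cite{chstab}. The crucial point is that on a twisted curve the total degree of a line bundle differs from an integer only by the sum of the ages at the $\mu_\delta$-markings (at each node the two branch contributions cancel). When $n>0$ this allows us to prescribe characters at the markings realising the fractional degrees $q_j(2g-2+n)$ compatibly with the relations $\sum_l m_{i,l}q_l=1$, after which the residual obstruction lies in the divisible group $\Pic^0$ of the coarse curve and vanishes, so $W_{g,n}\neq\emptyset$. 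When $n=0$ every line bundle on the twisted curve has integral degree, so $L_j^{\otimes\delta}\cong\omega^{\otimes\delta q_j}$ forces $q_j(2g-2)\in\ZZ$ for all $j$, i.e.\ $\abs{j_W}\mid 2g-2$; conversely, if $\abs{j_W}\mid 2g-2$ then $\omega$ admits a $\abs{j_W}$th root $T$ (integrality of the degree plus divisibility of $\Pic^0$), and $L_j:=T^{\otimes\abs{j_W}q_j}$, with the induced isomorphisms, is a $W$-structure precisely because $\sum_l m_{i,l}q_l=1$.

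For part (2), the group law on $W^0_{g,n}$ over $\MMM_{g,n,\delta}$ is the tensor product of $\Ocal$-$W$-structures, $(M_j,\psi_j)\cdot(M'_j,\psi'_j)=(M_j\otimes M'_j,\psi_j\otimes\psi'_j)$, with unit the trivial structure and inverse the dual; every defining condition is manifestly preserved, and associativity and the compatibilities over $\MMM_{g,n,\delta}$ are formal, so $W^0_{g,n}$ is a group stack --- again finite \'etale over $\MMM_{g,n,\delta}$, being open and closed in a fibred product of root stacks of $\Ocal$. The degree is computed fibrewise over a smooth $\delta$-stable curve $C$: a trivialisation $\psi_j$ of a given $\delta$-torsion line bundle is unique up to isomorphism (as $z\mapsto z^\delta$ is onto $\CC^*$), so the fibre groupoid is a $\mu_\delta^{N}$-gerbe over the finite group $K=\ker\!\big(E_W\colon\Pic(C)[\delta]^N\to\Pic(C)[\delta]^s\big)$, where $E_W=(m_{i,j})$. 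Now $\Pic(C)[\delta]\cong(\ZZ/\delta)^{2g+n-1}$ for $n>0$ (it is an extension of the sum-zero subgroup of the marking-character group $(\ZZ/\delta)^n$ by $\Pic^0(C)[\delta]\cong(\ZZ/\delta)^{2g}$) and $\cong(\ZZ/\delta)^{2g}$ for $n=0$; writing the Smith normal form of $E_W$ with invariant factors $e_1\mid\cdots\mid e_N$, so that $\Aut(W)\cong\prod_i\ZZ/e_i$ has order $\prod_i e_i$ and, by definition of $\delta$, exponent $\delta=e_N$ (whence $e_i\mid\delta$ for all $i$), one gets $\abs K=\prod_i e_i^{\,2g+n-1}=\abs{\Aut(W)}^{2g+n-1}$ for $n>0$ and $\abs{\Aut(W)}^{2g}$ for $n=0$; dividing by the $\delta^N$ automorphisms gives exactly the degrees in the statement.

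Finally, $W^0_{g,n}$ acts on $W_{g,n}$ over $\MMM_{g,n,\delta}$ by the same tensor product, which again preserves every defining condition, and for any two $W$-structures $(L_j,\fie_j)$ and $(L'_j,\fie'_j)$ over the same curve the pair $(L'_j\otimes L_j^\vee,\ \fie'_j\otimes\fie_j^\vee)$ is an $\Ocal$-$W$-structure carrying one to the other; hence the action is simply transitive on every nonempty fibre. Together with the surjective \'etale projection $W_{g,n}\to\MMM_{g,n,\delta}$ over the locus where $W_{g,n}\neq\emptyset$ --- all of $\MMM_{g,n,\delta}$ when $n>0$ or $\abs{j_W}\mid 2g-2$, by part (1) --- this exhibits $W_{g,n}$ as a torsor under $W^0_{g,n}$, and a torsor under a finite \'etale group stack has the same degree over the base, which is the last assertion. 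The genuinely non-formal points are the degree computation in part (2) --- computing $\abs{\Pic(C)[\delta]}$ on the twisted curve, controlling $K$ through the identity $\delta=\exp(\Aut(W))$ so that $e_i\mid\delta$, and keeping the $\mu_\delta^N$-automorphism count consistent --- together with the $n=0$ half of the nonemptiness criterion; the rest is bookkeeping on top of \cite{chstab}.
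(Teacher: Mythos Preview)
Your proposal is correct and follows exactly the approach the paper itself sketches in the remark immediately preceding the theorem: realise $W_{g,n}$ as an open--closed substack of a fibred product of $\delta$th-root stacks and invoke \cite{chstab}. The paper gives no further argument (the statement carries only a \qed), so your write-up is really a fleshed-out version of the same proof rather than an alternative route; the Smith-normal-form computation of the degree --- identifying $\ker(E_W\bmod\delta)$ with $\Aut(W)$ via $\delta=\exp(\Aut(W))=e_N$ --- and the explicit $n=0$ nonemptiness criterion are the details the paper leaves to the reader.
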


\begin{rem}\label{rem:comp}
The above moduli stack slightly differs from that used in \cite{FJR1}.
In the present paper a point representing a curve with
trivial automorphism group is equipped with $N$ automorphisms acting
by multiplication by $\xi_\delta$ on the fibres of $L_1,\dots,L_N$; therefore, as a
stack-theoretic point
it should be regarded as $B(\pmmu_\delta)^N$. In
\cite{FJR1}, the isomorphisms
$W_i(L_1,\dots, L_N)=\bigotimes_{j=1}^N L_j^{\otimes m_{i,j}}$
are included in the data defining an object; this involves some technicalities
on the compatibility between these isomorphisms \cite[2.1.4]{FJR1}.
Adding these extra data imposes further constraints
to the multiplications by $\delta$th roots of unity
along the fibres; hence, the generic automorphism
group in \cite{FJR1} may be
smaller than $\pmmu_\delta^N$ and is actually equal to $\Aut(W)\subseteq
(\pmmu_\delta)^N$.
It is easy to see that
the moduli functor of \cite{FJR1}
is an \'etale cover of $W_{g,n}$, locally isomorphic
to $$B\Aut(W)\to B(\pmmu_{\delta})^N;$$
therefore, since we regard the relevant classes defined in
\cite{FJR1} as pushforwards to $W_{g,n}$,
this issue does not affect the intersection theory
on the stack.
\end{rem}

\subsubsection{Decomposition of $W_{g,n}$ according to the type of the markings}
Consider a $\delta$th root $L$
of a line bundle pulled back from the universal stable curve of $\ol{\mathcal M}_{g,n}$
(\emph{e.g.}, $\omega_{\log}^{c}$ for some $c$).
An index
\begin{equation}\label{eq:mult}
\mult_{\sigma_i}L=\Theta_i\in [0,1[\end{equation}
is determined by the local index of the
universal $\delta$th root $L$
at the $i$th marking $\sigma_i$.
More explicitly, the local picture of
$L$ over $C$ at the $i$th marking
$\sigma_i$ is
parametrized by the
pairs $(x,\la)\in \CC^2$, where
$x$ varies along the curve and $\la$ varies along the fibres of the
line bundle.
The stabilizer $\pmmu_{\delta}$
at the marking acts as $(x,\la)\mapsto (\exp({2\pi \cxi/\delta})
x, \exp({2\pi \cxi\Theta_i})\la)$.
In this way, the local picture of $L$ provides an explicit definition of
$\Theta_1,\dots,\Theta_n$ for the markings $\sigma_1,\dots, \sigma_n$.
As a consequence of Definition \ref{defn:Wcurve},
the stack $W_{g,n}$ decomposed into several connected components
defined by specifying the multiplicities of  the roots $L_1,\dots,L_N$
at the points $\sigma_1,\dots,\sigma_n$.
We organize these data into $n$ multi-indices ${h}_1,\dots,{h}_n$ each one
with $N$ entries.
\begin{defn}
Let us fix $n$ multi-indices
with $N$ entries ${h}_i=(e^{2\pi\cxi \Theta^i_1},\dots,e^{2\pi\cxi \Theta^i_N})
\in U(1)^N$ for
$i=1,\dots, n$ and $\Theta^i_j\in [0,1[$.
Then
$W({h}_1,\dots,{h}_n)_{g,n}$
is the stack of $n$-pointed genus-$g$ $W$-curves satisfying
the relation
$\Theta^{i}_{j}=\mult_{\sigma_i} L_j,$
where $\Theta^{i}_{j}$ is the $j$th entry of
${h}_i$.
\end{defn}
\begin{pro}\label{pro:decomp}Let $n>0$.
The stack $W_{g,n}$ is the disjoint union
$$W_{g,n}=
\bigsqcup_{{h}_1,\dots,{h}_n \in U(1)^N}
W({h}_1,\dots,{h}_n)_{g,n}.$$
The  stack
$W({h}_1,\dots,{h}_n)_{g,n}$ is nonempty if and only if
\begin{equation}\label{eq:degcond}\begin{cases}
{h}_i=(e^{2\pi\cxi \Theta^i_1},\dots,e^{2\pi\cxi \Theta^i_N})\in \Aut(W)&i=1,\dots,n;\\
q_j(2g-2+n)-\sum_{i=1}^n \Theta^i_j\in \ZZ&j=1,\dots,N.
\end{cases}\end{equation}
\end{pro}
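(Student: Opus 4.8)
The plan is to exhibit the claimed disjoint-union decomposition by interpreting each connected component through the multiplicities of the line bundles $L_1,\dots,L_N$ at the markings, and then to characterize exactly when such a component is nonempty. First I would observe that the decomposition statement itself is essentially formal: given any $W$-curve in $W_{g,n}$, each root $L_j$ is a $\delta$th root of the fixed line bundle $\omega_{\log}^{\otimes\delta q_j}$ (which is pulled back from the coarse stable curve), so by \eqref{eq:mult} it has a well-defined local index $\Theta^i_j=\mult_{\sigma_i}L_j\in[0,1[$ at each marking $\sigma_i$. These indices are locally constant in families (they are discrete invariants of the $\pmmu_\delta$-action on the fibre), so they are constant on connected components; assembling them into the multi-indices ${h}_i=(e^{2\pi\cxi\Theta^i_1},\dots,e^{2\pi\cxi\Theta^i_N})$ partitions $W_{g,n}$ into the closed-and-open substacks $W({h}_1,\dots,{h}_n)_{g,n}$, which is the asserted disjoint union.

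The substantive content is the nonemptiness criterion \eqref{eq:degcond}. For the first condition, ${h}_i\in\Aut(W)$, I would argue as follows: the defining relations \eqref{eq:conditions} say $W_i(L_1,\dots,L_N)\cong\omega_{\log}$ for each monomial, and $\omega_{\log}$ has local index $0$ at every marking (it is pulled back from the coarse curve with trivial orbifold structure there, up to the standard twist). Reading off the local index at $\sigma_i$ of the isomorphism $\bigotimes_j L_j^{\otimes m_{i,j}}\cong\omega_{\log}$ forces $\sum_j m_{i,j}\Theta^i_j\in\ZZ$ for every $i=1,\dots,s$; but by the remark in Section~\ref{subsect:states} (cf.\ Remark~\ref{rem:Gspace} and the description of $\Aut(W)$), the condition that a diagonal element $(e^{2\pi\cxi\Theta^i_1},\dots,e^{2\pi\cxi\Theta^i_N})$ satisfies $W(e^{2\pi\cxi\Theta^i_1}x_1,\dots)=W(x_1,\dots)$ is precisely $\sum_j m_{i,j}\Theta^i_j\in\ZZ$ for all $i$, i.e.\ membership in $\Aut(W)$. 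For the second condition, I would compute the degree of $L_j$ as an orbifold line bundle: since $L_j^{\otimes\delta}\cong\omega_{\log}^{\otimes\delta q_j}$ and $\deg\omega_{\log}=2g-2+n$, we get $\delta\deg L_j=\delta q_j(2g-2+n)$; on the other hand, on a $\delta$-stable curve with the $\Theta^i_j$ prescribed at the markings, the orbifold degree of a root lies in $\frac1\delta\ZZ$ and its fractional part is determined by $-\sum_i\Theta^i_j\bmod 1$ (the markings are the only orbifold points with nontrivial action on $L_j$, the nodes being balanced). Equating these gives $q_j(2g-2+n)-\sum_i\Theta^i_j\in\ZZ$, which is exactly the integrality constraint in \eqref{eq:degcond}.

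For the converse — that these conditions suffice for nonemptiness — I would invoke the structure established in Theorem~\ref{thm:torsor}: the stack of $\delta$th roots of a given line bundle (here each $\omega_{\log}^{\otimes\delta q_j}$ with prescribed multiplicities at the markings) is nonempty over $\MMM_{g,n,\delta}$ precisely when the degree $q_j(2g-2+n)-\sum_i\Theta^i_j$ is an integer, by the standard theory of $r$th roots on twisted curves (\cite{chstab}); one constructs the individual $L_j$ first, and then the condition ${h}_i\in\Aut(W)$ guarantees that each $W_i(L_1,\dots,L_N)$ is a line bundle of degree $2g-2+n$ with trivial orbifold structure and the same multiplicity profile as $\omega_{\log}$, hence — after possibly twisting down by the appropriate torsion line bundle, which is where the torsor structure of Theorem~\ref{thm:torsor}(3) enters — isomorphic to $\omega_{\log}$, so that the $\fie_i$ can be chosen and a genuine $W$-curve produced. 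The main obstacle I anticipate is the bookkeeping in this last step: matching the local indices of $W_i(L_1,\dots,L_N)$ against those of $\omega_{\log}$ at every marking simultaneously for all $s$ monomials, and checking that the residual ambiguity (a torsion line bundle pulled back from the coarse curve) can always be absorbed — this is exactly the point where one uses that $W_{g,n}$ is a torsor under $W^0_{g,n}$ rather than merely a substack, so that the fibre over $\MMM_{g,n,\delta}$ is either empty or a full torsor, and nonemptiness follows once a single solution of the numerical constraints exists.
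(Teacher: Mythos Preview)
The paper states this proposition without proof, proceeding directly to Remark~\ref{rem:narrow}; it is evidently regarded as a routine consequence of the root-stack machinery of \cite{chstab} together with Theorem~\ref{thm:torsor}. Your outline supplies exactly such an argument, and both the disjoint-union decomposition and the necessity of \eqref{eq:degcond} are handled correctly (modulo a harmless sign slip: the fractional part of the orbifold degree of $L_j$ is $+\sum_i\Theta^i_j\bmod 1$, not $-\sum_i\Theta^i_j$, but this does not affect the integrality conclusion).

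The gap you yourself flag in the sufficiency direction is genuine, and your proposed patch via Theorem~\ref{thm:torsor}(3) does not close it as written. That the total space $W_{g,n}$ is a $W^0_{g,n}$-torsor over $\MMM_{g,n,\delta}$ tells you only that each fibre is either empty or a full torsor; to reach a \emph{prescribed} component $W(h_1,\dots,h_n)_{g,n}$ from some known point with indices $(h_1^0,\dots,h_n^0)$ you would need an element of $W^0_{g,n}$ with local indices $(h_i(h_i^0)^{-1})_i$, and producing that element is the same existence problem one level down. Likewise, ``twisting down by the appropriate torsion line bundle'' amounts to solving $E_W\vec T=\vec R$ in $J(\bar C)[\delta]^N$ simultaneously for all $s$ monomials, and full rank of $E_W$ over $\QQ$ is not by itself enough to guarantee a solution in $\delta$-torsion.

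A clean way to finish, still using only Theorem~\ref{thm:torsor}, is to count. The two conditions in \eqref{eq:degcond} together say exactly that $h_i\in\Aut(W)$ for all $i$ and $\prod_i h_i=j_W^{\,2g-2+n}$, so there are $|\Aut(W)|^{n-1}$ admissible tuples. Each nonempty component is a torsor under the identity-index subgroup $W^0(1,\dots,1)_{g,n}$, whose fibre over a smooth curve is $\ker\big(E_W\colon(\ZZ/\delta)^{N}\to(\ZZ/\delta)^{s}\big)^{2g}=\Aut(W)^{2g}$, giving each such component degree $|\Aut(W)|^{2g}/\delta^N$ over $\MMM_{g,n,\delta}$. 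Comparing with the total degree $|\Aut(W)|^{2g-1+n}/\delta^N$ from Theorem~\ref{thm:torsor}(2)--(3) forces the number of nonempty components to be exactly $|\Aut(W)|^{n-1}$, so every admissible tuple occurs.
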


\begin{rem}\label{rem:narrow} A marking of a $W$-curve
is therefore attached with a multi-index
${h}=({h}_1,\dots,{h}_N)\in \Aut(W)$.
The case where all coordinates of ${h}$ are nontrivial
is special: the sections of the line bundles $L_1,\dots,L_N$
necessarily vanish at such a marking. In this sense the bundle at that marking
is ``narrow''
(this may provide a geometric explanation for
the terminology of \S\ref{subsect:states}).
Similarly, a narrow node is a node whose multiplicities ${h}$ and
${h}^{-1}\in \Aut(W)$ on the two branches are narrow in the sense
of \S\ref{subsect:states}. Again sections
necessarily vanish at such a node.
\end{rem}

\subsubsection{The moduli stack associated to $W$ and $G$}
We identify open and closed substacks of
$W_{g,n,G}$ where the local indices ${h}$
only belong to a given subgroup $G$ of $\Aut(W)$. This happens because $G$
can be regarded as
the group of diagonal symmetries
of a polynomial
$$W(x_1,\dots,x_N)+\text{ extra quasihomogeneous terms in the
variables $x_1,\dots, x_N$}.$$
We may allow negative exponents
in the extra terms; we only require that the extra
monomials are distinct from those of $W$ but involve the same variables
$x_1,\dots,x_N$ with
charges $q_1,\dots,q_N$.
The following lemma is due to Krawitz \cite{Kr}.
\begin{lem}[Krawitz \cite{Kr}] For any $A$-admissible subgroup $G$ of $\Aut(W)$, there exists a
Laurent power series $Z$ in the same variables
$x_1, \dots, x_N$ as $W$
such that
$W(x_1,\dots,x_N)+Z(x_1,\dots,x_N)$
is quasihomogeneous in the variables $x_1,\dots,x_N$
with charges $q_1,\dots,q_N$ and we have
$G=G_{W+Z}.$
\qed
\end{lem}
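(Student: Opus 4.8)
The plan is to pass to the character group of the finite abelian group $\Aut(W)\subseteq(\CC^*)^N$. To $a=(a_1,\dots,a_N)\in\ZZ^N$ attach the Laurent monomial $x^a=\prod_jx_j^{a_j}$ and the character $\chi_a\colon(\CC^*)^N\to\CC^*$, $\Diag(\la_1,\dots,\la_N)\mapsto\prod_j\la_j^{a_j}$, so that a diagonal transformation fixes $x^a$ exactly when it lies in $\ker\chi_a$. By linear independence of distinct Laurent monomials, the diagonal symmetry group of any Laurent polynomial $P=\sum_ac_ax^a$ with all $c_a\neq0$ is $\bigcap_a\ker\chi_a$ over the monomials occurring; writing $W=\sum_i\gamma_ix^{m_i}$ this gives $\Aut(W)=\bigcap_i\ker\chi_{m_i}$, so each $g\in\Aut(W)$ fixes every monomial of $W$ separately and $\chi_{m_i}|_{\Aut(W)}=\mathbf 1$. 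Hence, adjoining finitely many Laurent monomials $x^{a^{(1)}},\dots,x^{a^{(k)}}$, each of $q$-degree one (\emph{i.e.}~$\sum_ja^{(l)}_jq_j=1$), pairwise distinct and distinct from the monomials of $W$, the Laurent polynomial $W+Z$ with $Z=\sum_lx^{a^{(l)}}$ is quasihomogeneous with charges $q_1,\dots,q_N$ and
\[
G_{W+Z}=\Aut(W)\cap\bigcap_l\ker\chi_{a^{(l)}}=\Bigl(\bigl\langle\chi_{a^{(1)}}|_{\Aut(W)},\dots,\chi_{a^{(k)}}|_{\Aut(W)}\bigr\rangle\Bigr)^{\perp},
\]
the annihilator being taken inside $\Aut(W)$. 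By Pontryagin duality for finite abelian groups, it therefore suffices to show that the characters $\chi_a|_{\Aut(W)}$ with $x^a$ of $q$-degree one generate the annihilator $G^{\perp}\subseteq\widehat{\Aut(W)}$.

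The core step is identifying these characters. Fixing one monomial $x^m$ of $W$ (so $\sum_jm_jq_j=1$), every $q$-degree-one vector has the form $m+\la$ with $\la$ in the lattice $\Lambda_0=\{\la\in\ZZ^N:\sum_j\la_jq_j=0\}$, and $\chi_{m+\la}|_{\Aut(W)}=\chi_\la|_{\Aut(W)}$ because $\chi_m|_{\Aut(W)}=\mathbf 1$. I claim the image of $\Lambda_0$ in $\widehat{\Aut(W)}$ is exactly $\langle j_W\rangle^{\perp}$, where $j_W=\Diag(e^{2\pi\cxi q_1},\dots,e^{2\pi\cxi q_N})$. The inclusion ``$\subseteq$'' holds since $\chi_\la(j_W)=e^{2\pi\cxi\sum_j\la_jq_j}=1$ for $\la\in\Lambda_0$. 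For ``$\supseteq$'', I would first note that $\ZZ^N=\widehat{(\CC^*)^N}\to\widehat{\Aut(W)}$ is surjective: it factors as $\ZZ^N\twoheadrightarrow(\ZZ/\delta)^N=\widehat{\pmmu_\delta^N}\twoheadrightarrow\widehat{\Aut(W)}$ with $\delta=\exp(\Aut(W))$, using $\Aut(W)\subseteq\pmmu_\delta^N$ and the fact that characters of a subgroup of a finite abelian group extend. Then, given $\chi\in\langle j_W\rangle^{\perp}$, lift it to $a\in\ZZ^N$ with $\chi_a|_{\Aut(W)}=\chi$; the hypothesis $\chi(j_W)=1$ says $e^{2\pi\cxi c}=1$ for $c:=\sum_ja_jq_j$, so $c\in\ZZ$, and $a-c\,m$ then lies in $\Lambda_0$ with $\chi_{a-cm}|_{\Aut(W)}=\chi_a|_{\Aut(W)}=\chi$.

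To finish, I invoke $A$-admissibility: $j_W\in G$ gives $G^{\perp}\subseteq\langle j_W\rangle^{\perp}$, which by the claim is the image of $\Lambda_0$; since $G^{\perp}$ is a finite group I may pick $\la^{(1)},\dots,\la^{(k)}\in\Lambda_0$ with $\langle\chi_{\la^{(l)}}|_{\Aut(W)}\rangle=G^{\perp}$. The kernel of $\Lambda_0\to\widehat{\Aut(W)}$ has finite index in $\Lambda_0$ (or $\Lambda_0=0$, forcing $\Aut(W)=\langle j_W\rangle$ and hence $G=\Aut(W)$, where $Z=0$ works), so each relevant coset is infinite and I can choose the representatives $a^{(l)}=m+\la^{(l)}$ pairwise distinct and avoiding the finitely many monomials of $W$; setting $Z=\sum_lx^{a^{(l)}}$ then yields $G_{W+Z}=(G^{\perp})^{\perp}=G$ by Pontryagin duality (if $G=\Aut(W)$, take $Z=0$). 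The one genuinely delicate point is the ``$\supseteq$'' half of the claim: the condition $\chi(j_W)=1$ is clearly necessary, and the content is that once it holds the $q$-degree-one constraint can always be restored by correcting a character lift with a multiple of a monomial of $W$ --- this is exactly where quasihomogeneity of $W$ and $A$-admissibility of $G$ enter. The remaining ingredients (surjectivity of character restriction, Pontryagin duality, the distinctness bookkeeping, and the degenerate cases) are routine.
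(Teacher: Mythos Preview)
The paper does not give a proof of this lemma: it is stated with attribution to Krawitz \cite{Kr} and immediately closed with a \qed, so there is nothing in the paper to compare your argument against. Your Pontryagin-duality argument is correct. The key identification --- that the characters of $\Aut(W)$ arising from $q$-degree-one Laurent monomials are exactly those in $\langle j_W\rangle^{\perp}$ --- is proved cleanly, and it is precisely here that both quasihomogeneity of $W$ (to correct the degree via a multiple of an exponent vector $m$ of $W$) and $A$-admissibility of $G$ (to ensure $G^{\perp}\subseteq\langle j_W\rangle^{\perp}$) are used, as you note. The bookkeeping on distinctness of the chosen monomials and the degenerate cases $G=\Aut(W)$ and $\Lambda_0=0$ are handled correctly. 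As a minor remark, your construction actually produces a Laurent \emph{polynomial} $Z$, which is a bit sharper than the ``Laurent power series'' promised in the statement.
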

In this way to each $A$-admissible
subgroup $G$ of $\Aut(W)$ we can associate a substack
$W_{g,n,G}$ of $W_{g,n}$ whose object will be
referred to as $(W,G)$-curves.
\begin{defn}
Let
$W_{g,n,G}$ be the full subcategory of $W_{g,n}$ whose
objects $(L_1,\dots,L_N)$ satisfy
$Z_t(L_1,\dots,L_N)\cong \omega_{\log}$,
where $Z=\sum_t Z_t$ is the sum of monomials $Z_t$
satisfying $G=G_{W+Z}$.
\end{defn}
\begin{rem} \label{rem:canemb}
The above definition of
$W_{g,n,G}$ makes sense.
It is immediate that the definition of $W_{g,n}$
extends when $W$ is a
quasihomogeneous power series.
It is also straightforward that the definition of
$W_{g,n,G}$ does not depend on the choice of $Z$.
Assume that there are two polynomials $Z'$ and $Z''$
satisfying $G=G_{W+Z'}=G_{W+Z''}$. We can define a third polynomial $\wt Z$
by summing all distinct monomials of $Z'$ and $Z''$. Then we immediately have
 $G_{W+\wt Z}=G$
and
$$(W+Z'\text{-conditions})_{g,n}\supseteq (W+\wt Z\text{-conditions})_{g,n}\subseteq
(W+Z''\text{-conditions})_{g,n}.$$ Notice that these inclusions
cannot be strict: the fibres  over $\MMM_{g,n,\delta}$ of all the
three moduli stacks involved
are zero-dimensional stacks
all isomorphic to  the disjoint union of $\abs{G}^{2g-1+n}$ copies
of $B(\pmmu_\delta)^N$.
\end{rem}

\begin{rem}
As in Proposition \ref{pro:decomp}, for $n>0$, we have
     $$W_{g,n,G}=
     \bigsqcup_{{h}_1,\dots,{h}_n\in G}
     W({h}_1,\dots,
     {h}_n)_{g,n,G},$$
     where ${h}_i\in G$ is the local index at the
     $i$th marked point.
\end{rem}

\begin{exa}
The case where $G=\Jgr$ is easy to work out.
The substack $W_{g,n,\Jgr}\subseteq W_{g,n}$
is the image of the stack of roots of $\omega$ of
order $\abs{j_W}$ via
the functor
$$(L,\fie)\mapsto \big((L^{\otimes \delta q_1}, \fie^{\otimes dq_1}),
\dots, (L^{\otimes \delta q_N},\fie^{\otimes dq_N})\big)$$
(recall that $\delta$ and $\abs{j_W}$ differ in general).
\end{exa}

\subsubsection{Tautological classes}
The so-called psi classes and kappa classes are defined as
$$\psi_i=\sigma_i^*\omega_{\pi} \quad \text{(for $i=1,\dots, n$)} \qquad \kappa_h=\pi_*(c_1(\omega_{\log})^{h+1})\in H^{2h}(W_{g,n})
\qquad \text{(for $h\ge 0$)},$$
where $\pi$ is the universal curve $\Ccal_{g,n}\to W_{g,n}$ and $\sigma_i$ 
denotes the universal section specifying the $i$th marking.
We can identify each stack $W_{g,n+1}({h}_1,\dots,{h}_n,1)$
to the universal curve $\pi \colon \Ccal \to W_{g,n}({h}_1,\dots,{h}_n)$  and 
express $\kappa_h$ as $\pi_*(\psi_{n+1}^{h+1})$.

Let us consider 
the higher direct images of the universal
$W$-structure $(\mathcal L_1,\dots, \mathcal L_N)$ on the
universal $d$-stable curve $\pi\colon \CCC_{g,n}\to W_{g,n}$.
We express its Chern character in terms of psi classes. 
The normalization 
of the boundary locus parametrizing singular curves in $W_{g,n}$  
can be identified to 
the stack parametrixing pairs ($W$-curves, nodes)
in the universal curve.
We consider the \'etale double cover
$\mathcal D$ given by  
the moduli space of triples ($W$-curves, nodes, a branch of the node).
The stack $\mathcal D$ is naturally equipped with two line bundles
whose fibres are the cotangent lines to the branches; we label the corresponding 
first Chern classes by 
$\psi, \psi'\in H^2(\mathcal D)$ starting from the 
branch attached to the geometric point in $\mathcal D$. 
Recall that  
a $\delta$th root at a node of a $\delta$-stable
curve determines local indices $a, b\in [0,1[$
such that $a+b\in \ZZ$ corresponding to 
the branches of the node (apply for each branch the
definition \eqref{eq:mult} or see \cite[\S2.2]{CZ}). In this way on $\mathcal D$,
the local index attached to the chosen branch determines
a natural
decomposition into open and closed substacks
and natural restriction morphisms of the map to $W_{g,n}$ 
$$\mathcal D=\bigsqcup_{\Theta\in [0,1[}
\mathcal D_\Theta,\qquad
\qquad j_\Theta\colon \mathcal D_\Theta \to W_{g,n}.$$
\begin{pro}\label{pro:GRR}
Let $W$ be a nondegenerate quasihomogeneous polynomial in $N$ variables
whose charges equal $q_1,\dots, q_N$.
For any $j=1,\dots, N$, consider
the higher direct image $R\pi_*\mathcal L_j$
of the $j$th component of the universal $W$-structure.
Let $\ch_h$ be the degree-$2h$ term of
the restriction of the Chern character  to
the stack
$W({h}_1,\dots, {h}_n)_{g,n}$,
where ${h}_i=(e^{2\pi\cxi \Theta^i_{1}},\dots,
e^{2\pi\cxi \Theta^i_{N}})$ for $\Theta^i_j\in [0,1[^N$.
We have
$$\ch_h(R\pi_* \mathcal L_j)=\frac{B_{h+1}
(q_j)}{(h+1)!}\kappa_h-
\sum_{i=1}^n
\frac{B_{h+1}({\Theta^i_{j}})}{(h+1)!}\psi_i^h+
\frac{d}{2} \sum _{0\le \Theta<1}
\frac{B_{h+1}(\Theta)}
{(h+1)!} (j_\Theta)_* \left(\sum_{a+a'=h-1} \psi^a (-\psi')^{a'}\right).$$
\end{pro}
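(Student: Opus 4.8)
The plan is to establish this formula by a Grothendieck--Riemann--Roch (GRR) computation applied to the universal curve $\pi\colon \CCC_{g,n}\to W_{g,n}$, carried out on each component $W(\iii_1,\dots,\iii_n)_{g,n}$ where the local monodromies are fixed. The key point is that $\mathcal L_j$ is a $\delta$th root of $\omega_{\log}^{\otimes \delta q_j}$, so that $\delta\, c_1(\mathcal L_j)$ agrees with $\delta q_j\, c_1(\omega_{\log})$ away from the markings and the nodes of the fibres, and one must correct the naive GRR output by the contributions supported on the special (orbifold) locus. First I would write $\td^\vee$ of the relative dualizing sheaf and expand $\ch(\mathcal L_j)=\exp(c_1(\mathcal L_j))$, then integrate along the fibre; the toric/combinatorial input is the classical identity expressing $\pi_*\bigl(\tfrac{1}{k!}c_1(\mathcal L_j)^k\,\text{(Todd factor)}\bigr)$ in terms of Bernoulli polynomials $B_{h+1}$ evaluated at the ``fractional degree'' of $\mathcal L_j$, which here is $q_j$ in the interior, $\Theta_j^i$ at the $i$th marking, and the node-index $\Theta$ at a node. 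This is the orbifold-GRR mechanism of Toen, already used in this exact form by Chiodo (and in the reference \cite{CZ} cited in the excerpt) for roots of $\omega_{\log}$.

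The three terms of the stated formula correspond precisely to the three strata carrying orbifold structure. The first term $\tfrac{B_{h+1}(q_j)}{(h+1)!}\kappa_h$ comes from the ``bulk'' of the curve, using $\kappa_h=\pi_*(c_1(\omega_{\log})^{h+1})$ and $\psi_{n+1}=c_1(\omega_\pi)$ restricted to the universal curve, together with the identification $W_{g,n+1}(\iii_1,\dots,\iii_n,\mathbf 1)\cong \CCC_{g,n}$ recalled just before the proposition. The second term, the sum over markings with $-B_{h+1}(\Theta^i_j)/(h+1)!\;\psi_i^h$, is the contribution of the $n$ sections $\sigma_i$, where the relevant fractional part is $\Theta^i_j=\mult_{\sigma_i}\mathcal L_j$ by \eqref{eq:mult}; the sign and the drop in $\psi$-degree reflect that a section is a divisor. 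The third term is the node contribution: the normalization of the boundary is the \'etale double cover $\mathcal D=\bigsqcup_\Theta \mathcal D_\Theta$ introduced above, with cotangent-line classes $\psi,\psi'$, and the factor $\sum_{a+a'=h-1}\psi^a(-\psi')^{a'}$ is exactly what GRR produces from a nodal singularity in the fibre (the ``$\psi+\psi'$ in the denominator'' expanded), weighted by $\tfrac{d}{2}\,\tfrac{B_{h+1}(\Theta)}{(h+1)!}$ with the $1/2$ accounting for the double cover and $d=\delta$ the order of the stabilizer. I would assemble these by localizing the GRR integral to a neighbourhood of each stratum.

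The main obstacle is bookkeeping the orbifold/stacky corrections correctly: one must apply GRR in the stack-theoretic setting (Toen's theorem), which replaces $\ch$ and $\td$ by their orbifold counterparts involving the inertia stack, and then match the inertia contributions at markings and nodes with the Bernoulli-polynomial evaluations at $\Theta^i_j$ and at the node-index $\Theta$. Concretely, the delicate step is verifying that the local contribution of a $\pmmu_\delta$-gerbe-type marking with $\mathcal L_j$ acting by $e^{2\pi\cxi\Theta^i_j}$ is $-\tfrac{B_{h+1}(\Theta^i_j)}{(h+1)!}\psi_i^h$ and, at a node, that summing over the two branches (indices $\Theta$ and $1-\Theta$, or $0$ and $0$ in the narrow case) yields the symmetric expression $\sum_{a+a'=h-1}\psi^a(-\psi')^{a'}$ with the stated coefficient; here one uses $B_{h+1}(1-x)=(-1)^{h+1}B_{h+1}(x)$ to reconcile the two branch labels on the double cover $\mathcal D$. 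Once the local analysis at each of the three strata is pinned down, the global formula follows by additivity of GRR over the stratification, exactly as in \cite[\S2.2]{CZ}; I would cite that reference for the detailed local computations rather than reproduce them.
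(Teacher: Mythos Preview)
Your outline is correct and is precisely the content of the orbifold Grothendieck--Riemann--Roch computation carried out in \cite{ch}; the paper's own proof is the single sentence ``This is an immediate consequence of the main result of \cite{ch}.'' The more accurate citation for the detailed local computations is therefore \cite{ch} rather than \cite{CZ}.
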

\begin{proof}
This is an immediate consequence of
the main result of \cite{ch}.
\end{proof}

\subsection{The virtual cycle}\label{subsect:virt}
The $\RW$ invariants of $(W,G)$ fit in the
the formalism of Gromov--Witten theory.
     Fix the genus $g$
     and the number of markings $n$ (with $2g-2+n>0$, stability condition);
     then, for any choice of nonnegative
     integers $a_1,\dots, a_n$ (associated to powers of psi classes
     $\psi_1^{a_1}, \dots, \psi_1^{a_n}$)
     and any choice of elements $\al_1,\dots,\al_n\in \Hcal_{W,G}$ we can
     define an invariant (a rational number)
     \begin{equation}\label{eq:FJRWinvariants}
\langle \tau_{a_1}({\al_1}),\dots,\tau_{a_n}({\al_n})
     \rangle^{W,G}_{g,n}.                         \end{equation}
Once the ``target'' $(W,G)$ is fixed, the procedure is similar to Gromov--Witten theory
and shares many features with orbifold theory.
 An intrinsic mathematical
object is attached to each genus $g$ and each number of markings $n$: the so called
``virtual cycle''. Then the psi classes $\psi_1^{a_1}, \dots, \psi_1^{a_n}$
and the state space entries $\al_1,\dots,\al_n\in \Hcal_{W,G}$
naturally yield FJRW invariants via 
intersection theory carried out on 
a moduli space classifying 
the solutions to the Witten equation. 
This is a moduli space overlying 
the moduli space of $W$ curves introduced above. We will not provide a 
treatment at this level of generality, but we identify a number of cases 
where one can reduce to the moduli space of $W$ curves. 

First, let us recall the formalism of 
\cite{FJR1}. There, the definition of \eqref{eq:FJRWinvariants} is
     given by extending linearly the treatment of the special case
     where the entries $\al_i\in \Hcal_{W,G}$ lie
     within a single summand $H^{N_g}(\CC^N_g,W^{+\infty};\CC)$ of
     \eqref{eq:decompstates}. We denote by ${h}_i$ the group element
     satisfying $\al_i\in H^{N_{{h}i}}(\CC^N_{{h}_i},W^{+\infty};\CC)$.
     When all the
markings are \emph{narrow} (in the sense of Remark \ref{rem:narrow}),
a lemma of Witten shows that his
equation has only zero solutions; this provides an heuristic explanation for the
existence of a definition of the FJRW invariants 
in terms of intersection of psi classes against an 
\emph{algebraic} virtual cycle.
In other terms, when $\al_i$ is narrow for all $i$, we have 
     \begin{equation}\label{eq:specifictonarrow}
H^{N_{{h}_i}}(\CC^N_g,W^{+\infty};\CC)\cong {\pmb 1}_{{h}_i}\cdot\CC; 
\end{equation}
\emph{i.e.}~there is a canonical generator ${\pmb 1}_{{h}_i}$ and, by
     abuse of notation, we can write
     $
\langle \tau_{a_1}({{h}_1}),\dots,\tau_{a_n}({{h}_n})
     \rangle^{W,G}_{g,n}
$
and carry the computation directly in the rational cohomology ring 
of $W(h_1,\dots,h_n)_{g,n}$.

In general, for instance for D-type singularities (see \cite{JRbroadD4}), 
nonvanishing invariants attached to \emph{broad} entries 
should  be included in order to define a consistent Gromov--Witten-type theory.
The presence of nonvanishing invariants attached to 
these broad entries can be actually probed indirectly 
via universal relations such as WDVV equation.
This observation is the starting point of the 
FJRW setup. 
Even if \cite{FJR1} provides a coherent setup, a 
direct computation is still an open  problem in general. 
FJRW analytic setup via Witten's PDE indicates that this is due to the lack of 
an effective method to solve Witten's PDE equation. 
(The reader may refer to 
\cite{JRbroadD4} for an example of complete treatment of a D-singularity
involving the broad sector.) 

In this paper, we illustrate the approach
which has allowed a large part of the 
computations available in the literature. Namely, 
we restrict to 
well-behaved cases where we can 
assume that the markings are all within 
the narrow sector. There, 
equation \eqref{eq:specifictonarrow} allows us to focus essentially only on the 
enumerative geometry of the moduli space. 
We will return to the general  case
in the last
part of the section: ``Cohomological field theory in the general case''.

\subsubsection{The case of $A$ singularities. A well-behaved ``concave'' locus.}
In \cite{Wi93b}, Witten considers the case of
the $A_{r-1}$ singularity $W=x^r$. Here, the only $A$-admissible group is
$\Aut(W)=\langle j_W\rangle\cong \pmmu_r$ and the
moduli stack is
$$W_{g,n,G}({h}_1,\dots,{h}_N)=W_{g,n}({h}_1,\dots,{h}_n),$$
for ${h}_i=\exp(e\pi\cxi \Theta_i)$ with $\Theta_i\in [0,1[$.
We have a universal curve
$\pi \colon \Ccal\to W_{g,n}({h}_1,\dots,{h}_n)$ carrying an
$r$th root $\Lcal$ of the relative sheaf
of logarithmic differential
$\omega_{\log,\pi}$.
Let $C$ be a fibre of the universal
curve and let $L$ be the $r$th root
on $C$; consider the space $$V=H^1(C,L).$$
Generically, and away from a few low genus cases, this
vector space has dimension
\begin{equation}\label{eq:DimCycle}
D=(1-g)\left(1-\frac 2 r\right) + \sum_{i=1}^N\left(\Theta_i-\frac1r\right).
\end{equation}
This fails precisely when $H^0(C, L)$ is non-zero. When $H^0(C,L)$ is trivial
we say that $L$ is concave,
the vector space $V$ vary as a complex
vector bundle over $W_{g,n}({h}_1,\dots,{h}_N)$
of rank $D$, the locally free sheaf $R^1\pi_* \Lcal$.
In this case, the virtual cycle is
Poincar\'e dual to the top Chern class of $(R^1\pi_*\Lcal)^{\vee}$
\begin{equation}\label{eq:ctopvirt}
[W_{g,n}({h}_1,\dots,{h}_N)]^{\vir}=\ctop(R^1\pi_*\Lcal)^{\vee}=(-1)^D\ctop (R^1\pi_*\Lcal).
\end{equation}

\subsubsection{Witten's analytic construction for A-singularities.}
Since $H^0(C, L)$ is not trivial in general, Witten suggested the following approach.
At least over the open substack of smooth curves
one has bundles of Hilbert spaces $\Ecal=\Omega^{0,0}(\Lcal)$
and $\Fcal=\Omega^{0,1}(\Lcal)$ (consisting respectively of
$\Lcal$-valued $(0,0)$-forms and $(0,1)$-forms along the fibres of $\pi$),
with a family of operators $\ol\partial\colon \Ecal\to \Fcal$.
Choosing a Hermitian metric on $L$ defines an
    isomorphism $\ol{L}\cong L^\vee$.
In this way the Serre duality (SD) map
\begin{equation}\label{eq:SD}
s\in
H^0(C, L^{\otimes r-1})\hookrightarrow H^0(C, \omega \otimes L^{\vee})
\stackrel{\rm SD}{=} H^1(C,L)^\vee\ni s^{r-1}                                                  \end{equation}
is regarded as a family of maps
on the total space of $\Ecal$
\begin{align}\label{eq:Wittenmap}
\ol {\partial W}\colon \Ecal &\to p^*\Fcal,\\
s&\mapsto \ol s^{r-1},\nonumber
\end{align}
where $p$ is the projection $\Ecal$ to the moduli stack $W_{g,n}$.
Witten considers the section of $p^*\Fcal\to \Ecal$
\begin{equation}\label{eq:WW}
\WW(s)=\ol \partial (s) +\ol {\partial W}(s),  \end{equation}
for which
\begin{equation}\label{eq:nondegWclass}
 \WW(s)=0 \ \ \Leftrightarrow\ \  s=0;
\end{equation}
\emph{i.e.}~$\WW$  vanishes only on the zero section
of the total space of $\Ecal$.
Then, the above data  defines a topological Euler
    class $(-1)^De(\WW: \Ecal \to \pi^*\Fcal)$ which generalizes
\eqref{eq:ctopvirt}.
It was not clear, however, how to extend this
approach to singular curves and to the whole stack $W_{g,n}({h}_1,\dots,{h}_n)$.

\subsubsection{The algebraic counterpart for A-singularities.}
The algebraic counterpart of the above analytic construction has been
provided in \cite{PV} via bivariant intersection theory using
MacPherson's graph construction.
In \cite{chK}, the first author
provided a compatible
construction directly in the $K$ theory ring of
$W_{g,n}$. This can be presented in very simple and explicit terms and
may clarify the above discussion.
Instead of $\Ecal\to \Fcal$, consider a complex of coherent locally free sheaves
\begin{equation}\label{eq:complexdiff}0\to  E\xrightarrow{\ \delta\ } F \to 0
\end{equation}
representing the pushforward $R\pi_*\Lcal$ in the derived category; this
exists because $\pi$ is of relative dimension one. We have $\rk(F)-\rk(E)=-\chi(\Lcal)=D$
by Riemann--Roch.
The case where $H^0(C, L)$ constantly vanishes is the case where
we can choose $E=0$ and define the virtual cycle as $\ctop(F^\vee)$; or,
equivalently, in terms of Chern and Todd characters
via the well known Grothendieck formula
$\ctop(F^\vee)=\ch\left(\textsum_{k}(-1)^k\Lambda^k F\right)\td(F^\vee).
$
Since the Todd class is invertible
it makes sense to define
$\td(F^\vee- E^\vee)=\td(F^\vee)/\td(E^\vee).$
In this way, the difficulty lies
in generalizing the term $\ch\left(\textsum_{k}(-1)^k\Lambda^k F\right)$.
To this effect we need to modify
$(\Lambda^kF)_{k=0}^{\rk F}$,
which should be regarded as a complex with zero differential and
$K$ class $\sum_{k} (-1)^k \Lambda^k F$.
Then, this is generalized by a double graded complex of coherent sheaves
with two differentials
$$\left(C^{h,k}=\Sym^hE\otimes \Lambda^kF,\ \ \ \
\delta\colon C^{h,k}\to C^{h-1,k+1}, \ \ \ \
\partial\colon C^{h,k}\to
C^{h-r+1,k-1}\right),$$
where the Koszul differentials $\delta$ and $\partial$
are induced by \eqref{eq:complexdiff}
and by (\ref{eq:SD}-\ref{eq:Wittenmap}).
The reparametrization $(p,q)=(h+k-rk, h+k)$ transforms them
into horizontal and vertical differentials of bidegree
$(-r,0)$ and $(0,-r)$,
More important, due to
\eqref{eq:nondegWclass} the differentials commute and
the cohomology with
respect to the total differential
$H^i_{\delta+\partial}(C^{\bullet,\bullet})$
vanishes except for a finite number of ranks $i$.
In this way, the Chern character
of
$\sum_i (-1)^iH^i_{\delta+\partial}(C^{\bullet,\bullet})$
is well defined
and we can set
\begin{equation}\label{eq:Ktheorydefn}
[W_{g,n}({h}_1,\dots,{h}_n)]^{\vir}=
\ch\left(\textsum_i (-1)^i H^i_{\delta+\partial}(C^{\bullet,\bullet})\right)\td(F^\vee- E^\vee),
\end{equation}
which satisfies cohomological field theory axioms (a key property is
due to Polishchuk \cite{Po}).

\subsubsection{Fan, Jarvis and Ruan's construction for the narrow sector}
Fan, Jarvis and Ruan extended Witten approach to the
case of a general singularity in full generality. The $W$-structure $L_1,\dots,L_N$ can be assembled into
a single vector bundle  $$E=\oplus_{j=1}^N L_j.$$
When the markings are all narrow,
the codimension of the cycle
    $[W({h}_1,\dots,{h}_n)]_{g,n}^{\vir}$ in
    $W({h}_1,\dots,{h}_n)_{g,n}$ equals
    $-\chi(R\pi_* E)$ for $E=\oplus_{j=1}^N L_j$.
By Riemann--Roch for orbifold curves \cite[Thm.~7.2.1]{AGrV},
for ${h}_i=(e^{2\pi\cxi \Theta_1^i},
    \dots,e^{2\pi\cxi \Theta_N^i})$,
    we can explicitly compute
    \begin{align}\label{eq:RRcomput}
    -\chi(R\pi_*E)&=-\rk(E)(1-g)-\deg(E)+\sum_{i,j}\Theta_j^i \nonumber\\
    &=(g-1)N-\sum_{j=1}^N(2g-2+n)q_i+\sum_{i,j}\Theta_j^i \nonumber\\
    &=(g-1)\sum_{j=1}^N(1-2q_j)+\sum_{i=1}^n\sum_{j=1}^N(\Theta_j^i-q_j) \nonumber\\
    &=(g-1)\wh c_W+ \sum_{i=1}^n(\age({h}_i) -q),\nonumber\\
&=(g-1)\wh c_W+ \frac12\sum_{i=1}^n\deg\left({\pmb 1}_{{h}_i}\right),
    \end{align}
    where
    in the last equality we see the role played in
    FJRW theory by the central charge $\widehat c_W=\sum_j  (1-2q_j)$, the age shift
    the constant $q=\sum_j q_j$, and the grading introduced in \S\ref{subsect:states}.
Note how the above formula specializes to \eqref{eq:DimCycle} for $W=x^r$.

Again, since the universal curve $\pi \colon \Ccal \to W_{g,n}({h}_1,\dots,{h}_n)$
is a flat morphism of relative dimension one the pushforward
$R\pi_*E$ in the derived categories can be represented by a two-terms complex of the
form \eqref{eq:complexdiff}; we get $\delta=\oplus \delta_j \colon E\to p^*F$.
Then, in \cite{FJR2}, Witten's morphism
$\ol {\partial W}\colon s\mapsto \ol s^{r-1}$
is replaced
by the direct sum of all partial derivatives
$\ol{\partial_jW}$ for all $j=1,\dots, N$; we get
$\ol{\partial W}=\oplus_j \ol {\partial W_j}\colon E\to p^*F.$
Set $\WW=\delta+\ol {\partial W} $ as in \eqref{eq:WW}.
The nondegeneracy condition for $\WW$
\eqref{eq:nondegWclass} extends immediately
by the nondegeneracy of the polynomial $W$.
Then in \cite{FJR2} the virtual cycle is defined via a topological Euler class construction
    $$[W_{g,n,G}({h}_1, \cdots, {h}_n)]^{vir}=
(-1)^{-\chi(E)}e(\WW: E \to p^*F)\cap [W_{g,n,G}({h}_1, \cdots, {h}_n)]$$
We highlight two subcases.
\begin{description}
\item[Concavity.]
Suppose that all markings are narrow and
suppose that for every fibre $C$ of the universal curve
$H^0(C, L_j)=0$ for all $j$. Then the virtual
cycle is given by
$$\left[W_{g,n,G}({h}_1, \cdots, {h}_n)\right]^{\vir}=
\ctop\left((R^1\pi_*E)^\vee\right) \cap \left[W_{g,n,G}({h}_1, \cdots,
{h}_n)\right].$$
\item[Index zero.]
\label{ax:wittenmap}
Suppose that the dimension of
$W_{g,n,G}({h}_1, \cdots, {h}_n)$ is zero
and that all  markings are narrow. Furthermore, let us assume that
the $\pi_* E$ and $R^1\pi_* E$ are both vector spaces and share the same rank.
Then the
virtual cycle is just the degree of $\WW\colon E\to p^*F$.
\end{description}

\begin{rem}
When $W$ is of Fermat type  paired
with the group $\langle j\rangle$, the genus-zero theory falls into
the concave case.
Here, the expression of the virtual cycle via the top Chern class
allows explicit computations via the  Grothendiek--Riemann--Roch formula of Proposition \ref{pro:GRR};
this happens because $\ctop=[\exp(\sum_{k>0} s_k \ch_k)]_{\rm top}$ for $s_k=(-1)^{k}(k-1)!$ 
(see for example \cite{ChRu}).
Furthermore, since in this case $W$ is a sum of monomials of the form
$x^r$,
even in higher genus
we can describe the virtual cycle by intersecting cycles
defined as in \eqref{eq:Ktheorydefn}. We refer again to \cite{ChRu} for
more details.
\end{rem}

\subsubsection{Cohomological field theory in the general case}\label{subsect:CohFT}
For sake of completeness we finish the section by presenting the formalism
in the general case, beyond the narrow sector.
In general the virtual cycle is defined for
  $$[W_{g,n,G}({h}_{1}, \cdots, {h}_{n})]^{vir}\in
       H_*(W_{g,n}({h}_1, \cdots, {h}_n), \CC)\otimes \prod_{i=1}^n
       H_{N_{{h}_i}}(\CC^{N_{{h}_i}},
       W^{+\infty}_{{h}_i}, \CC)^G$$
       of  degree
       $$2\left((\wh{c}_W-3)(1-g)+n -\sum_i \left(\age({{h}_i})-q\right)\right)$$
(this is the real dimension $6g-6+2n$ of $W_{g,n}$
minus the same degree already discussed in \eqref{eq:RRcomput}).
One of the main achievements of \cite{FJR1} is the proof
of the fact that this cycle,
satisfies the
axioms of a Gromov-Witten type theory.
These axioms can be summarized in terms of the
abstract notion of cohomological field theory \cite{KM}.
This amounts to show that the virtual cycle
has good factorization properties with respect to
the decomposition of the boundary of the moduli space of stable curves $\MMM_{g,n}$.
To this effect, let us introduce the forgetful morphism
$$f\colon W_{g,n}\to \MMM_{g,n}$$
to the standard Deligne--Mumford space. This map factors through the forgetful
map to $\MMM_{g,n,\delta}$ but fails to be \'etale; nevertheless,
Theorem \ref{thm:torsor} determines the degree of $f$ because
$\MMM_{g,n,\delta}$ has degree $1$ over $\MMM_{g,n}$ (see \cite{chstab}).

\begin{defn}
The operators
$\Lambda^W_{g,n,G} \in \Hom(\Hcal_{W,G}^{\otimes n},
H^*({\MMM}_{g,n}))$ are defined as follows. For each entry
$\al_1,\dots,\al_n\in \Hcal_{W,G}$ assume there is a group element
${h}_i\in G$ satisfying
$\alpha_i\in
H^{N_{{h}_i}}(\CC^N_{{h}_i},W^{+\infty}_{{h}_i};\CC)^G$. Then, we set
 $$
\Lambda^W_{g,n,G}(\alpha_1, \cdots, \alpha_k) :=
\frac{|G|^g}{\deg(f)}
f_*\left(\left[W_{g,n,G}({h}_1, \cdots, {h}_n)\right]^{\vir} \cap
\prod_{i=1}^n \alpha_i \right).$$
We extend the definition linearly to the entire space $\Hcal_{W,G}^{\otimes
n}$.

We provide the abstract framework of cohomological field theory.
Suppose that $H$ is a graded vector space with a nondegenerate
    pairing $\langle\cdot,\cdot  \rangle$ and a degree zero unit $1$. To simplify the
    signs, we assume that $H$ has only even degree elements and the pairing
    is symmetric. (When this is not the case,
there are systematic solutions in terms of cohomological field theories over super-state spaces)
Once and for all, we choose a homogeneous basis
    $\phi_{\alpha}$ ($\alpha=1, \dots, \dim H$) of $H$ with $\phi_1=1$. Let
    $\eta_{\mu\nu}=\langle \phi_{\mu}, \phi_{\nu}\rangle$ and
    $(\eta^{\mu\nu})=(\eta_{\mu\nu})^{-1}$.
    \begin{defn}\label{defn:CohFT}
    A cohomological field theory is a collection of homomorphisms
    $$\Lambda_{g,n}: H^{\otimes n}\rightarrow
    H^*(\overline{\Mcal}_{g,n}, \CC)$$
    satisfying the following properties:

\begin{description}
\item[C1.] The element $\Lambda_{g,n}$ is invariant under the
  action of the symmetric group $S_n$.

\item[C2.] Let $g=g_1+g_2$ and $k=n_1+n_2$ and cosider $\rho_{\rm tree}\colon \MMM_{g_1,n_1+1}\times\MMM_{g_2,n_2+1}\to \MMM_{g,n}$.
Then
$\Lambda_{g,n}$ satisfy the composition property
\begin{multline*} \label{eq:cfttree}
\rho_{\mathrm{tree}}^*
\Lambda_{g_1+g_2,n}(\alpha_1,\dots,\alpha_n)=
\Lambda_{g_1,n_1+1}(\alpha_{i_1},\dots,\alpha_{i_{n_1}},\mu)\,
\eta^{\mu\nu} \otimes \Lambda_{g_2,k_2+1}(\nu
,\alpha_{i_{{n_1}+1}},\dots,\alpha_{i_{n_1+n_2}})
\end{multline*}
for all $\alpha_i \in H$.

\item[C3.] Let
$\rho_{loop}: \overline{\Mcal}_{g-1, n+2}\rightarrow
\overline{\Mcal}_{g, n}$
be the loop-type gluing morphism. Then
\begin{equation}
\label{eq:cftloop}
\rho_{\mathrm{loop}}^*\,\Lambda_{g,n}(\alpha_1,\dots,\alpha_n)\,=\,
\Lambda_{g-1,n+2}\,(\alpha_1,\dots,\alpha_n, \mu,
\nu)\,\eta^{\mu\nu},
\end{equation}
where $\alpha_i$, $\mu$, $\nu$, and $\eta$ are as in C2.

\item[C4a.] For all $\alpha_i$ in $H$ we have
\begin{equation}
  \label{eq:identity}
\Lambda_{g,n+1}(\alpha_1,\dots, \alpha_n, 1) =
\pi^*\Lambda_{g,n}(\alpha_1,\dots, \alpha_n),
\end{equation}
where $\pi:\overline{\Mcal}_{g,n+1} \rightarrow
\overline{\Mcal}_{g,n}$ is the forgetful morphism.
\item[C4b.] We have
\begin{equation}
\label{eq:identity2}
\int_{\overline{\Mcal}_{0,3}}\,\Lambda_{0,3}(\alpha_1,\alpha_2,1)
= \langle\alpha_1,\alpha_2\rangle.
\end{equation}
\end{description}
\end{defn}
For each cohomological field theory, we can generalize the notion
of intersection number, the generating function and total
descendant potential function. Let
$$\langle \tau_{a_1, \alpha_1}, \cdots, \tau_{a_n,
\alpha_n}\rangle^{\Lambda}_g=
\int_{\overline{\Mcal}_{g,n}}\prod_i \psi^{a_i}_i\Lambda_{g,n}(\phi_{\alpha_1},
\dots, \phi_{\alpha_n}).$$
By associating a formal variable $t^{\alpha}_i$ to $\tau_{i,
\alpha}$, we define generating functions
$$\Fcal_{\Lambda}^g=\sum_{n\geq 0}\frac{t^{\alpha_1}_{a_1}\dots
t^{\alpha_k}_{a_n}}{n!}\langle\tau_{a_1, \alpha_1}, \dots,
\tau_{a_n, \alpha_n}\rangle_g$$
    and their total  potential function
    $$\Dcal_{\Lambda}=\exp\left(\textsum_{g\geq
    0}h^{g-1}\Fcal^g_{\Lambda}\right).$$
\end{defn}
\begin{thm}[Fan--Jarvis--Ruan \cite{FJR1}]
\label{thm:CohFT}Let $1$ be the distinguished generator ${\pmb 1}_{j_W}$ attached
$j_W$ lying in the $A$-admissible group $G$. Let
$\langle\cdot ,\cdot\rangle^{W,G}$
denote the pairing on the state space
$\Hcal_{W,G}$. Then, the collection $(\Hcal_{W, G}, \langle\cdot ,\cdot \rangle^{W,G},
\{\Lambda^W_{g,n,G}\}, 1)$ is a cohomological field theory.\end{thm}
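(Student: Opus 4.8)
The plan is to verify the five axioms C1--C4b of Definition~\ref{defn:CohFT} directly for the operators $\Lambda^W_{g,n,G}$, with the unit taken to be the distinguished degree-zero generator $\pmb 1_{j_W}$ and the pairing the inner pairing of \S\ref{subsect:states}. Axiom C1 is immediate: the stack $W_{g,n,G}$, the universal $W$-structure, the Witten map $\WW$, hence the virtual cycle, are all constructed symmetrically in the $n$ markings, and $f\colon W_{g,n}\to\MMM_{g,n}$ is $S_n$-equivariant, so $\Lambda^W_{g,n,G}$ is $S_n$-invariant. The substance of the theorem lies in the factorization axioms C2, C3 and in the two unit axioms C4a, C4b.

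For C2 and C3 I would first describe the boundary of $W_{g,n,G}$. The normalization of the nodal locus is, as in Theorem~\ref{thm:torsor} and the discussion of the double cover $\Dcal$, a moduli stack of $(W,G)$-curves of lower complexity carrying two extra markings at the preimages of the node; a $\delta$th-root computation shows the local indices there are $h$ and $h^{-1}$ for a common $h\in G$ (since $a+b\in\ZZ$ on the two branches). Thus the gluing morphisms $\rho_{\mathrm{tree}}$, $\rho_{\mathrm{loop}}$ lift to $W_{g,n,G}$ as a disjoint union over $h\in G$ of finite \'etale maps. The key geometric input is then the comparison of virtual cycles: the pull-back of $[W_{g,n,G}(\vec h)]^{\vir}$ along such a lift equals, componentwise in $h$, the external product (resp.\ the diagonal restriction) of the virtual cycles of the normalized pieces. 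In the narrow/concave cases this follows from the standard exact sequence relating $R\pi_*(\oplus_j L_j)$ on a nodal curve to its restrictions to the two branches, together with the Whitney formula for $\ctop$ (resp.\ the multiplicativity of the topological Euler class of $\WW\colon E\to p^*F$); in general it is the content of the analytic gluing of solutions of the Witten equation carried out in \cite{FJR1}, whose algebraic shadow in the Fermat case rests on Polishchuk's property \cite{Po}. Summing over $h$ and using the identification $\varepsilon$ between the $h$- and $h^{-1}$-sectors of $\Hcal_{W,G}$ turns the sum over the node data into the metric tensor $\eta^{\mu\nu}$, and the numerical factor $|G|^g/\deg(f)$ is exactly calibrated --- via the degree formula of Theorem~\ref{thm:torsor} for $f$ and the multiplicativity of \'etale degrees (together with the $\mu_\delta$-isotropy created at the normalized node) --- so that these identities hold with the precise constants in C2 and C3.

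For C4a I would use that adding a marking carrying the distinguished index $j_W$ leaves the $W$-structure unchanged: $W_{g,n+1,G}(\vec h, j_W)$ is canonically the universal curve over $W_{g,n,G}(\vec h)$, the bundle $E=\oplus_j L_j$ and the Witten map pull back, so $[W_{g,n+1,G}(\vec h,j_W)]^{\vir}$ is the pull-back of $[W_{g,n,G}(\vec h)]^{\vir}$; the projection formula and the compatibility of $f$ with the forgetful maps of Deligne--Mumford spaces then give \eqref{eq:identity}, with $\deg(f)$ transforming correctly under $n\mapsto n+1$. For C4b the space $\MMM_{0,3}$ is a point; the degree condition \eqref{eq:degcond} with one index equal to $j_W$ forces the other two to be mutually inverse, so the relevant component of $W_{0,3,G}$ is zero-dimensional and, at least in the narrow case, we are in the ``index zero'' situation, where the virtual cycle is the degree of $\WW\colon E\to p^*F$; by the construction of the inner pairing through the automorphism $I$ and the Lefschetz-thimble intersection form $P$ this degree equals $\langle\alpha_1,\alpha_2\rangle$. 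In the presence of broad entries this is precisely the identification of the FJRW pairing with the residue pairing, which is part of \cite{FJR1}.

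I expect the main obstacle to be exactly this virtual-cycle comparison under gluing when broad sectors sit at the nodes: there one genuinely needs the full analytic construction of \cite{FJR1} rather than the reduction to the moduli of $W$-curves used elsewhere in this paper, and one must carry out the delicate matching of boundary contributions together with the verification that the whole construction is independent of the auxiliary choices (Hermitian metrics, the representing complex $E\to F$, the resolution $Z$ of $G$) and descends through the $[\CC^N/G]$-quotient. The remaining axioms are, by comparison, bookkeeping once this comparison and the degree formulas of Theorem~\ref{thm:torsor} are in hand.
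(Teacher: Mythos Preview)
The paper does not supply a proof of this theorem: it is stated as a result of Fan--Jarvis--Ruan and simply cited to \cite{FJR1} (the sentence just before the statement reads ``One of the main achievements of \cite{FJR1} is the proof of the fact that this cycle satisfies the axioms of a Gromov--Witten type theory''). So there is no ``paper's own proof'' to compare against; your proposal is effectively an attempt to outline what \cite{FJR1} does.

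As such an outline, your sketch is reasonable and you correctly locate the real difficulty: the comparison of virtual cycles under gluing when broad sectors appear at the node genuinely requires the full analytic construction of \cite{FJR1,FJR2,FJR3}, and cannot be reduced to the concave/index-zero shortcuts emphasized in this survey. Two small caveats. First, your C4a argument is slightly off: adding a marking with local index $j_W$ does change the $W$-structure (the line bundles $L_j$ acquire an extra twist at the new marking), so the identification of $W_{g,n+1,G}(\vec h,j_W)$ with the universal curve is not literally ``the bundle $E$ pulls back'' but involves the standard comparison $L_j\leftrightarrow L_j(-\sigma_{n+1})$; the concavity/virtual-cycle statement survives this, but the bookkeeping is a bit more than you indicate. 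Second, for C4b in the narrow case the virtual cycle is in fact the fundamental class of a zero-dimensional stack (not an ``index zero'' degree computation), and the identification with the pairing is immediate from the definition of $\langle\cdot,\cdot\rangle_g$ via $\varepsilon$; the index-zero axiom is used elsewhere. None of this is a fatal gap --- the paper itself defers everything to \cite{FJR1} --- but if you intend to present this as a self-contained argument you would need to import substantial portions of that reference.
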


The following properties hold. \begin{enumerate}
                                \item {Decomposition.}
If $W_1$ and $W_2$ are two singularities in distinct
variables, then the cohomological field theory arising from $(W_1+W_2,G_1\times G_2)$ is
the tensor product of the cohomological field theories arising
from $(W_1,G_1)$ and $(W_2,G_2)$.

\item {Deformation invariance.}
Suppose that $W_t, t\in [0,1]$ is a one-parameter family of nondegenerate polynomials such that $W_t$ is
    $G$-invariant. Then, we have a
    canonical isomorphism  $\Hcal_{W_0, G}\cong \Hcal_{W_1, G}$.
    Under the above isomorphism,
    $$\Lambda^{W_0}_{g,n,G}=\Lambda^{W_1}_{g,n,G}.$$
    Namely, $\Lambda^W_{g,n}$ depends only on $(q_1, \dots, q_N)$ and on $G$.
    Note also that, when applied to a deformation of a polynomial $W$ along a loop,
    this property implies monodromy invariance for  $\Lambda^W_{g,n,G}$.
    \item{$\Aut(W)$-invariance.}
    The group $\Aut(W)$ acts on $\Hcal_{W,G}$
    in an obvious way.
    Then, $\Lambda^W_{g,n,G}$ is invariant with respect to the action of
    $\Aut(W)$ on each state space entry $\al_1,\dots,\al_n\in \Hcal_{W,G}$.
    \end{enumerate}

\section{State spaces: a complete picture}\label{sect:classMS}
State spaces are the
cornerstones of Gromov--Witten theory and of Fan--Jarvis--Ruan--Witten theory.
At their level,
we can provide an exhaustive picture featuring LG-CY correspondence as well as
mirror symmetry. To this effect, since we already went through $A$ model state spaces,
let us introduce the $B$ model state space.

\subsection{${\it B}$ model state space}
The present discussion parallels the above introduction of the $A$ model state space.

\subsubsection{Local algebra from the classical point of view}
Consider the  {local algebra} (also
known as the  {chiral ring} or the  {Milnor ring})
$
\Qcal_W:=\CC[x_1, \dots, x_N]/\Jac(W)$, with $\Jac(W)$ equal to the
Jacobian ideal generated by partial derivatives
$\Jac(W)=\left({\partial_1 W}, \dots, {\partial_N W}\right).$
We regard each polynomial $\alpha(x_1,\dots, x_N)$ in $\Qcal_W$ as an 
$N$-form 
$\al(x_1,\dots, x_N)dx_1\wedge\dots\wedge dx_N$. 
In this way a diagonal symmetry $\Diag(\la_1,\dots,\la_N)$ operates on $\prod_j x_j^{m_j} dx_1\wedge\dots\wedge dx_N$ 
by multiplication by $\prod_{  j} \la_j^{m_j+1}$. 
 
The
local algebra is graded by assigning 
$x_j\mapsto q_j$; in this way $\prod_j x_j^{m_j} dx_1\wedge\dots\wedge dx_N$
has degree $\sum_j (m_j+1)q_j$.  
There is a unique element
$$\hess(W)=\det(\partial_i\partial_j W)$$
whose degree is maximal.
The dimension of the local algebra is given by the formula
$$\mu(W)=\prod_i\left(\frac{1}{q_i}-1\right).$$
For $f, g \in \Qcal_W$,
the residue pairing $\langle f, g\rangle$ is determined by writing
$fg$ in the form
$$fg ={\langle f, g\rangle}\frac{\hess(W )}{\mu(W)} + \text{terms of lower degree}.$$
This pairing is well defined and nondegenerate. It endows the local algebra
with the structure of a Frobenius algebra (\emph{i.e.}~$\langle f g, h\rangle =\langle f, gh\rangle$). For more details, see \cite{AGV}.

\subsubsection{The state space of $(W,G)$}
From the modern point of view, the local algebra is regarded as 
a part of the $B$ model theory of singularities. For its application,
it is important to orbifold the construction by $G$. The
orbifold $B$ model graded vector space with pairing $\Ocal_{W,G}$
was essentially worked out by the physicists Intriligator and Vafa
\cite{IV} (see \cite{KA1} for a mathematical account). The
ring structure was constructed later
by Kaufmann \cite{KA2} and Krawitz \cite{Kr}
in the case of the so called ``invertible'' $W$ and $B$-admissible
group $G\subseteq \Aut(W)$.

For each $g\in G$, we write as usual $\CC^N_{g}$ for the points
of $\CC^N$ fixed by  $g$. We write $W_{g}$ for the restriction
of $W$ to ${\CC^N_{g}}$. In this way $W_g$ is
a quasihomogeneous singularity in a subspace of $\CC^N$ and admits a
local algebra $\Qcal_{W_g}$ with a natural $G$-action.
\begin{defn}[$B$ model state space] \label{defn:Bstates}
 For any $B$-admissible group $G$, we set
$$\Qcal_{W,G}=\bigoplus_{g\in G} (\Qcal_{W_g})^G,$$
where $(\ \ )^G$ denots the $G$-invariant subspace.
\end{defn}
\begin{rem}
The state space $\Qcal_{W,G}$  is clearly
    a module over $(\Qcal_W)^G$.
\end{rem}
\begin{rem}\label{rem:Bgrading}
Remark \ref{rem:Gspace} may be regarded as saying that
the $B$ model state space is
--- by construction ---
isomorphic to the
$A$ state space of Definition \ref{defn:Astates}. On the other hand,
the space is equipped with a different Hodge bigrading as follows.
For a $G$-invariant form $\al$ of degree $p$ in $\Qcal_{W_g}$,
the bidegree $(\deg^+_B(\al),\deg_B^-(\al))$ is defined as follows
$$(\deg^+_B(\al),\deg_B^-(\al))=(p,p)+(\age(g),\age(g^{-1}))-(q,q)
\qquad \qquad (\text{with }q=\textsum q_j).$$
We will usually write $\Qcal_{W,G}^{a,b}$ for the terms of bidegree $(a=\deg^+_B(\al),b=\deg^-_B(\al)) $. 
We write $\Qcal_{W,G}^d$ for the terms whose total degree $\deg_B=a+b$ equals $d$.
\end{rem}
\subsubsection{The inner pairing}
Notice that $\Qcal_{g}$ is canonically isomorphic to
    $\Qcal_{g^{-1}}$. The pairing of $\Qcal_{W,
    G}$ is the direct sum of residue pairings
    $$\langle \cdot ,\cdot \rangle\colon
    \Qcal_{g}\otimes \Qcal_{g^{-1}}\rightarrow
    \CC$$
    via the pairing of the local algebra.
\begin{defn}[pairing for $\Qcal_{W,G}$]\label{defn:Bpairing}
We have a nondegenerate inner product
$$\langle \cdot,\cdot\rangle \colon \Qcal_{W,G}\times \Qcal_{W,G}\to \CC.$$
\end{defn}
The pairing descends to a nondegenerate pairing $\Qcal_{W,G}^a\times 
\Qcal^{2\widehat c_W-a}_{W,G}\to \CC$ where 
 $\widehat{c}_W$ is the so-called central charge $\sum_j(1-2q_j)$. 
This quantity,  already appearing in
Definition \ref{defn:Apairing}, plays a fundamental role in singularity theory: the singularities whose central charge $c_W$ 
is less than $1$ are called simple singularities and admit an ADE classification.


\subsection{Mirror symmetry between LG models}
Berglund and H\"ubsch \cite{BH}
consider
polynomials in $N$ variables having $N$ monomials
\begin{equation}\label{eq:invertible}
W(x_1,\dots,x_N)=\sum_{i=1}^N\prod_{j=1}^N x_j^{m_{i,j}}.
\end{equation}
Note that each of the $N$ monomials
has coefficient one; in fact,
since the number of variables equals the number of monomials and $W$ is nondegenerate,
any polynomial of the form
$\sum_{i=1}^N \gamma_i \prod_{j=1}^N x_j^{m_{i,j}}$ can be
reduced to
the above expression by conveniently
rescaling the $N$ variables.
In this way assigning a polynomial $W$ as above amounts
to specifying its exponent square matrix
$$E_W=(m_{i,j})_{1\le i,j\le N}.$$

The polynomials studied in \cite{BH} are called ``invertible'' because the matrix
$E_W$ is an invertible $N\times N$ matrix as a consequence of the
uniqueness of the charges $q_1,\dots, q_N$ (nondegeneracy of $W$).
There is a strikingly simple classification of invertible
nondegenerate singularities by Kreuzer and Skarke \cite{KS}.

An invertible potential $W$ is nondegenerate if and only
if it can be written, for a suitable permutation of the variables, as a
sum of invertible potentials (with disjoint sets of variables)
of one of the following
three types:
\begin{eqnarray}
&W_\text{Fermat} = x^a.\label{eq:Ftype}\\
& W_\text{loop}= x_1^{a_1}x_2+x_2^{a_2}x_3+\dots +x_{N-1}^{a_{N-1}}x_N+x_N^{a_N}x_1.\label{eq:Ltype}\\
&W_\text{chain}= x_1^{a_1}x_2+x_2^{a_2}x_3+\dots +x_{N-1}^{a_{N-1}}x_N+x_N^{a_N}.\label{eq:Ctype}
\end{eqnarray}

One can compute the charges $q_1,\dots,q_N$ by simply setting
\begin{equation}\label{eq:chargesfromM}q_i=\textsum_j m^{i,j},
\end{equation}
the sum of the entries on the $i$th line of $E_W^{-1}=(m^{i,j})_{1\le i,j\le N}$.

Each column $(m^{1,j},\dots,m^{N,j})$
of the matrix $E_W^{-1}$ can be used to
define the diagonal matrix
\begin{equation}\label{eq:colsym}
\rho_j=\Diag(\exp(2\pi \cxi m^{1,j}), \dots,\exp(2\pi \cxi m^{N,j})).
\end{equation}
In fact these matrices satisfy the following properties
$\rho_j^*W=W$; \emph{i.e.}~$W$ is invariant with respect to
$\rho_j$.
Furthermore the group
$\Aut(W)$ of diagonal matrices $\al$ such that $\al^*W=W$ is generated
by the elements $\rho_1,\dots,\rho_N$:
$$\Aut(W):=\{\al={\rm Diag}(\al_1,\dots,\al_N)\mid \al^*W=W\}=
\langle \rho_1,\dots,\rho_N\rangle.$$
For instance, the above mentioned matrix $j_W$ whose diagonal entries are $\exp(2\pi\cxi q_1),\dots,$ and
$\exp(2\pi\cxi q_N)$
lies in $\Aut(W)$ and is indeed the product $\rho_1\cdots\rho_N$.
Recall that $$SL_W=\Aut(W)\cap SL(\CC^N),$$
the matrices with determinant $1$;
in Berglund and H\"ubsch's construction we consider groups $G$
containing $j_W$ ($A$-admissible) and included in $SL_W$ ($B$-admissible). We write $\wt G$ for the
quotient $G/\langle j_W\rangle$.

The geometric side of the LG-CY correspondence
is an orbifold or smooth Deligne--Mumford stack.
More precisely, let $d$ be the least common denominator
of $q_1=w_1/d, \dots, q_N=w_N/d$ (\emph{i.e.}~$d=\abs{j_W}$). Then $X_W=\{W=0\}$
is a degree $d$ hypersurface of the weighted
projective space $\PP(w_1, \dots, w_N)$.
Then, $W$ is nondegenerate
(\emph{i.e.}~$W$ has
a single critical point at the origin) if and only if
$X_W$ is a smooth Deligne-Mumford stack.
Let $W$ be a nondegenerate invertible potential
of charges $q_1,\dots, q_N$ satisfying the \emph{Calabi--Yau condition}
\begin{equation}\label{eq:CYcond}\textsum_j q_j=1.
\end{equation}
The geometrical meaning of this condition is that
$X_W=\{W=0\}$ is of Calabi--Yau type in the sense that the canonical line bundle $\omega$ is trivial
(adjuction formula: $d=\sum_j w_j$).
Under the CY condition, let us point out a special case where several simplifications occur; namely, the case where $w_j$ divides $d=\sum_j w_j$. 
Then the CY hypersurface defined $X_W$ is embedded within a weighted projective stack whose coarse space is Gorenstein. This is a very 
special condition which allows direct computations of  GW invariants in genus zero. 

\subsubsection{The polynomial $W^{\vee}$}
Following Berglund--H\"ubsch, we consider the transposed polynomial $W^{\vee}$ defined by the property
$$E_{W^{\vee}}=(E_W)^{\vee}.$$
Namely, we set 
\begin{equation}\label{eq:daulpoly}
W^{\vee}(x_1,\dots,x_N)=\sum_{i=1}^N\prod_{j=1}^N x_j^{m_{j,i}}
\end{equation}
by
transposing the matrix $(m_{i,j})$ encoding the exponents.
This construction respects the above classifications  \eqref{eq:Ftype}, \eqref{eq:Ltype} and \eqref{eq:Ctype}. 
As a consequence, $W^{\vee}$ is nondegenerate if and only if $W$ is nondegenerate.
Recall that $q_j$ is the sum of the $j$th column of the inverse matrix $E^{-1}_W$.
Hence,
the charges $\ol{q}_1,\dots,\ol q_N $ of $W^{\vee}$ are the sums of the
rows of $E^{-1}_W$. Therefore,
$$\textsum_j q_j=\textsum_j \ol{q}_j.$$
In this way, $W^{\vee}$ is of Calabi--Yau type if and only if  $W$ is of Calabi--Yau type.

The striking idea of
Berglund and Hübsch is that $W$ and $W^{\vee}$ should be related by mirror symmetry.
Clearly this is not true in the naive way:
the mirror of a Fermat quintic three-fold $X_W$
is not the quintic itself as one would get by transposing the corresponding exponent
matrix $E_W$. Instead, as already discussed in the introduction,
the mirror $X_W^{{\vee}}$ is the quotient of $X_W$ by the automorphism group
$(\ZZ_5)^3$.
It was already understood by Berglund--H\"ubsch that the correct statement should
read
$$(W,G) \mbox{ mirror to } (W^{\vee}, G^{\vee})$$
for a conveniently defined dual group $G^{\vee}$.
Many examples of dual groups have been constructed in the literature.
The general construction was given only recently by Krawitz \cite{Kr}.

\subsubsection{The group $G^{\vee}$}
The group $G^{\vee}$ is contained in $\Aut(W^{\vee})$. Recall that $\Aut(W^{\vee})$ is spanned
by the diagonal symmetries $\rho_1^{\vee},\dots,\rho_N^{\vee}$ determined by the columns
of $(E_W^{\vee})^{-1}$ as in \eqref{eq:colsym}:
$$\Aut(W^{\vee})=\langle\rho_1^{\vee},\dots, \rho_N^{\vee}\rangle.$$
Then $G^{\vee}$ is the subgroup defined by
\begin{equation}\label{eq:dualgroup}
G^{\vee}=\textstyle{\left \{\prod_{j=1}^N
(\rho^{\vee}_i)^{a_i} \mid \text{ if }\prod_{j=1}^N
x_i^{a_i} \text{ is } G\text{-invariant}\right\}}.
\end{equation}

More explicitly, we express any $g\in G$ as $g=\rho^{k_1}_1\dots \rho^{k_N}_N$
and $h\in G^{\vee}$ as $h=(\rho_1^{\vee})^{l_1}\dots (\rho^{\vee}_N)^{l_N}.$ Then, $G^{\vee}$ is
determined by imposing within $\Aut(W^{\vee})$ the following
conditions  for all  $g=\rho^{k_1}_1\dots \rho^{k_N}_N\in G$
$$\begin{bmatrix}k_1 & \dots & k_N
  \end{bmatrix}
E^{-1}_W \begin{bmatrix}
          l_1\\
          \vdots\\
          l_N
         \end{bmatrix}
\in \ZZ.$$

We
have the following properties: transposition is an involution $(G^{\vee})^{\vee}=G$,
it is inclusion-reversing ($H\subseteq K\Rightarrow H^{\vee}\supseteq K^{\vee}$),
it sends the trivial subgroup of $\Aut(W^{\vee})$
to the total group $\Aut(W)$, and
it exchanges $\langle j_W\rangle$ and $SL_{W^{\vee}}$.

\subsubsection{Mirror symmetry conjectures between LG models} 
Now, we can state two mirror symmetry conjectures.
Here, ``mirror'' means that the $A$ model and  the $B$ model are exchanged.
The first one is the Berglund--H\"ubsch--Krawitz mirror symmetry of the form \emph{${\rm LG}{{\mid}}\reflectbox{\rm LG}$}.

\begin{conj}[mirror symmetry ${\rm LG}{{\mid}}\reflectbox{\rm LG}$]\label{conj:LGtoLG}
Suppose that $W$ is a nondegenerate
invertible polynomial. Then the Landau--Ginzburg models $(W,G)$ and $(W^{\vee}, G^{\vee})$
mirror each other.
\end{conj}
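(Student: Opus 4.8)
The statement is a conjecture, so the realistic target is a layered attack: establish the mirror isomorphism first on bigraded state spaces with their pairings, then on the Frobenius-algebra structures, and flag the full cohomological-field-theory statement as the genuinely open part. Throughout I use that $G$ is taken both $A$-admissible ($j_W\in G$) and $B$-admissible ($G\subseteq SL_W$); since transposition exchanges $\langle j_W\rangle$ with $SL_{W^{\vee}}$, the dual $G^{\vee}$ is again both, so $\Hcal_{W,G}$ and $\Qcal_{W^{\vee},G^{\vee}}$ are both defined and ``mirror'' means that $\Hcal_{W,G}$, with the FJRW theory of Theorem~\ref{thm:CohFT}, is identified with $\Qcal_{W^{\vee},G^{\vee}}$, with its Saito-type $B$-model structure.

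\textbf{State spaces.} First I would compare the two decompositions $\Hcal_{W,G}=\bigoplus_{g\in G}H^{N_g}(\CC^N_g,W^{+\infty}_g;\CC)^G$ from \eqref{eq:decompstates} and $\Qcal_{W^{\vee},G^{\vee}}=\bigoplus_{h\in G^{\vee}}(\Qcal_{W^{\vee}_h})^{G^{\vee}}$. By Remark~\ref{rem:Gspace} the $g$-summand on the $A$ side is, as a $G$-representation, the $G$-invariant part of the top-form twist $dx_{\bullet}\cdot\Qcal_{W_g}$. The crux is Krawitz's bijection $G\to G^{\vee}$, $g\mapsto h(g)$: one writes $g=\rho_1^{k_1}\cdots\rho_N^{k_N}$ with the generators of \eqref{eq:colsym} and reads off the dual exponents, arranging that the fixed subspace $\CC^N_g$ and the fixed subspace for $h(g)$ carry \emph{transposed} sub-potentials --- so $W_g$ and $W^{\vee}_{h(g)}$ form a transposed nondegenerate pair (up to renaming variables and discarding ones on which both vanish). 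This keeps the Kreuzer--Skarke classification into Fermat, loop, and chain blocks stable, making an induction on the number of variables well-founded. For a single block one checks by direct computation that $\dim(dx_{\bullet}\cdot\Qcal_{W_g})^G=\dim(\Qcal_{W^{\vee}_{h(g)}})^{G^{\vee}}$; the general case then follows from the tensor-product compatibility of both constructions --- the Decomposition property stated after Theorem~\ref{thm:CohFT}, and its evident $B$-side analogue.

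\textbf{Gradings and pairings.} Granting the bijection, the bidegree match is bookkeeping: an $A$-class in the $g$-sector has bidegree $(p,N_g-p)+(\age(g),\age(g))-(q,q)$ and a $B$-class in the $h$-sector has bidegree $(p',p')+(\age(h),\age(h^{-1}))-(q,q)$; using $\age(g)+\age(g^{-1})=N-N_g$, the Calabi--Yau normalization \eqref{eq:CYcond}, and the explicit ages of the $\rho_j$ read from the rows and columns of $E_W^{-1}$ (via \eqref{eq:colsym} and \eqref{eq:chargesfromM}), one verifies that the $A$-bidegree of the $g$-sector equals the \emph{transpose} of the $B$-bidegree of the $h(g)$-sector --- which is exactly the $A\leftrightarrow B$ exchange. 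Since Krawitz's map intertwines inversion, $h(g^{-1})=h(g)^{-1}$, and both pairings are assembled sector by sector from the residue pairings on $\Qcal_{W_g}$ and $\Qcal_{W^{\vee}_h}$, the pairings correspond as well.

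\textbf{Ring structure and cohomological field theory.} Next one upgrades the vector-space isomorphism to a ring isomorphism $\Hcal_{W,G}\cong\Qcal_{W^{\vee},G^{\vee}}$, matching the FJRW quantum product with the orbifold Milnor-ring product of Kaufmann--Krawitz; for invertible $W$ with $B$-admissible $G$ this again reduces to a block-by-block comparison of three-point correlators. The final --- and principal --- obstacle is to promote this to an isomorphism of cohomological field theories, identifying $\{\Lambda^W_{g,n,G}\}$ of Theorem~\ref{thm:CohFT} with a Saito--Givental-type theory of $(W^{\vee},G^{\vee})$. That is precisely where the argument cannot be completed in the stated generality: no construction of an orbifolded $B$-model cohomological field theory is available beyond special cases (e.g.\ $G=\langle j_W\rangle$ or $G=\Aut(W)$, small central charge, or the quintic and elliptic-orbifold situations of the introduction), so the conjecture must remain a conjecture at the cohomological-field-theory level, and is provable unconditionally only at the level of state spaces with pairing --- and, for invertible $W$, as Frobenius algebras.
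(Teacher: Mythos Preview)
Your overall framing is right and matches the paper's own treatment: the conjecture is explicitly presented as a guideline rather than a theorem, Krawitz's result (Theorem~\ref{thm:Kra}) is quoted for the bigraded state-space isomorphism, Kaufmann--Krawitz for the Frobenius-algebra level, and the full cohomological-field-theory statement is left open for lack of an orbifolded $B$-model (Problem~\ref{prob:orbifolding}). Your use of the Kreuzer--Skarke decomposition and tensor-product compatibility as the structural backbone is also what the paper points to.

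There is, however, a genuine error in your sketch of the state-space step. You posit ``Krawitz's bijection $G\to G^{\vee}$, $g\mapsto h(g)$'' and then match $W_g$ with $W^{\vee}_{h(g)}$ sector by sector. No such bijection exists: one has $|G|\cdot|G^{\vee}|=|\Aut(W)|$, so for example when $G=\langle j_W\rangle$ the dual is $G^{\vee}=SL_{W^{\vee}}$ and the two groups have different orders. The actual Krawitz map (Remark~\ref{rem:Kraproof}) does something structurally different: it \emph{swaps monomials with group elements},
\[
\bigwedge_i x_i^{l_i}dx_i \mid \prod_i \rho_i^{k_i+1}\Big\rangle \ \longmapsto\ \bigwedge_i x_i^{k_i}dx_i \mid \prod_i (\rho_i^{\vee})^{l_i+1}\Big\rangle,
\]
so the target sector is determined by the source \emph{monomial}, not by the source sector. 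A single $g$-summand on the $A$ side generally spreads over many $h$-summands on the $B$ side, and it is precisely this exchange that makes the bidegrees match --- without any appeal to the Calabi--Yau condition~\eqref{eq:CYcond}, which neither the conjecture nor Theorem~\ref{thm:Kra} assumes. Once you replace your sector bijection by this monomial/group-element swap, the remainder of your outline (block-by-block verification, pairing compatibility, open CohFT level) is in line with the paper.
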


Let $W$ be invertible and of Calabi--Yau type.
We say $G\subseteq \Aut(W)$  is of {\em Calabi--Yau
type} if  $\langle j_W\rangle \subseteq G\subseteq SL_W$
(the fact that $j_W$ is contained
in $SL_W$ follows from the Calabi--Yau condition \eqref{eq:CYcond}).
In this case $\wt{G}=G/\langle j_W\rangle  $ acts on $X_W$
faithfully and the quotient $[X_W/\wt{G}]$
is still an orbifold with trivial canonical bundle (Calabi--Yau type).
The properties listed above for the construction
associating $G^{\vee}$ to $G$ show that
$G$ is of Calabi--Yau type if and only if
$G^{\vee}$ is of Calabi--Yau type.
Then, within the Calabi--Yau category,
we obtain a mirror symmetry conjecture of type \emph{${\rm CY}{{\mid}}\reflectbox{\rm CY}$}.

   \begin{conj}[mirror symmetry ${\rm CY}{{\mid}}\reflectbox{\rm CY}$]\label{conj:CYtoCY}
   Suppose that $W$ and $G$ satisfy the Calabi--Yau condition (automatically the same
   holds for $W^{\vee}$ and $G^{\vee})$. Then
   the stack $[X_W/\wt{G}]$ is the mirror of $[X_{W^{\vee}}/\wt{G^\vee}]$.
   \end{conj}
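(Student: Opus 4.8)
The plan is to reduce Conjecture \ref{conj:CYtoCY} to the Landau--Ginzburg statement of Conjecture \ref{conj:LGtoLG} by sandwiching it between the two instances of the LG--CY correspondence attached to $(W,G)$ and to $(W^\vee,G^\vee)$. One wants a commuting square whose horizontal sides are LG--CY correspondences --- $[X_W/\wt G]\leftrightarrow (W,G)$ on top and $[X_{W^\vee}/\wt{G^\vee}]\leftrightarrow (W^\vee,G^\vee)$ on the bottom, each being the generalization of Figure \ref{fig:picture} in which the Gepner point now governs the state space $\Hcal_{W,G}$ rather than $\Hcal_{W,\langle j_W\rangle}$ --- whose right side is Conjecture \ref{conj:LGtoLG}, and whose left side is the CY$\mid$CY mirror statement we want, obtained by transport of structure. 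Since ``mirror'' exchanges the $A$ and $B$ models, the composition must send the $A$-model of $[X_W/\wt G]$ to the $A$-model (FJRW theory) of $(W,G)$, then to the $B$-model (Saito--Givental theory) of $(W^\vee,G^\vee)$, then to the $B$-model (variation of Hodge structure) of $[X_{W^\vee}/\wt{G^\vee}]$, and symmetrically in the other direction.

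At the cohomological level --- the material of Section \ref{sect:classMS} --- the plan is to chain three isomorphisms of bigraded vector spaces with pairing,
$$H^{p,q}_{\CR}\big([X_W/\wt G]\big)\ \cong\ \Hcal_{W,G}^{\,\bullet,\bullet}\ \cong\ \Qcal_{W^\vee,G^\vee}^{\,\bullet,\bullet}\ \cong\ H^{\dim-p,q}_{\CR}\big([X_{W^\vee}/\wt{G^\vee}]\big).$$
The outer two are the LG--CY correspondence for state spaces (the analogue of \eqref{eq:classMS}; see Theorem \ref{thm:LGCYstates} for $G=\langle j_W\rangle$), which one proves by comparing the Chen--Ruan cohomology of the Calabi--Yau orbifold with the FJRW state space of Definition \ref{defn:Astates}, keeping track of the overall age shift by $q=\sum_j q_j$ and of the passage to the quotient $[X_W/\wt G]$ (the latter needing the stabilizer-in-codimension-two hypothesis already flagged around \eqref{eq:mirror}). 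The middle isomorphism is the cohomological part of Conjecture \ref{conj:LGtoLG}, due to Kaufmann \cite{KA2} and Krawitz \cite{Kr} for invertible $W$; the delicate point is to check that the $A$-model bigrading $(\deg^+_A,\deg^-_A)$ on the left matches the $B$-model bigrading $(\deg^+_B,\deg^-_B)$ on the right so that, after the two LG--CY age shifts, one recovers exactly the Hodge flip $(p,q)\mapsto(\dim-p,q)$. Alternatively --- and this is likely the cleanest route for the purely cohomological assertion --- one bypasses the LG models altogether, computes the two Chen--Ruan Hodge diamonds directly and combinatorially from $q_1,\dots,q_N$, $G$, and the transposition rule \eqref{eq:dualgroup}, and verifies the flip entry by entry; this is the invertible-polynomial counterpart of Borisov's proof of the Batyrev--Borisov Hodge-number identities, and uses only the classification \eqref{eq:Ftype}--\eqref{eq:Ctype} together with the elementary properties of $G\mapsto G^\vee$.

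The quantum statement is far harder, and here I expect the main obstacle. Upgrading the cohomological isomorphism to an identification of Frobenius structures --- genus-zero Gromov--Witten theory of $[X_W/\wt G]$ against Saito--Givental theory of $W^\vee$, hence (after analytic continuation to the Gepner point) genus-zero FJRW theory of $(W^\vee,G^\vee)$ --- requires (i) the quantum LG--CY correspondence for both $(W,G)$ and $(W^\vee,G^\vee)$, that is the analytic continuation along $(\PP^1)^\times$ of the relevant Dubrovin connections together with a symplectic change of basis generalizing the matrix $\mathbb U_{\text{\rm LG-CY}}$ of \eqref{eq:LGCYsympl}, and (ii) the quantum version of Conjecture \ref{conj:LGtoLG}, of which only fragments are presently known. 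Both ingredients are open beyond well-behaved ambient spaces, so a complete proof is out of reach. The realistic plan is to restrict to the Gorenstein case, with each $w_j$ dividing $d=\sum_j w_j$ as singled out after \eqref{eq:CYcond}: there one arranges all markings to lie in the narrow sector and uses concavity to write the virtual class as $\ctop\big((R^1\pi_*E)^\vee\big)$, computing it by the Grothendieck--Riemann--Roch formula of Proposition \ref{pro:GRR}, while on the Gromov--Witten side one invokes Givental-type mirror theorems for hypersurfaces in Gorenstein weighted projective stacks; matching the resulting $I$-functions and mirror maps and then transporting through transposition would prove Conjecture \ref{conj:CYtoCY} in these cases and, combined with the cohomological results, give strong evidence for it in general. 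A recurring subsidiary difficulty is the broad sector: invisible to the narrow-sector simplifications but essential for the full cohomological field theory, it must be controlled through WDVV-type universal relations (cf.\ \cite{JRbroadD4}).
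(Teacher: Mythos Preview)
The statement is a \emph{conjecture}, and the paper does not prove it: the remark immediately following it says explicitly that, since no precise meaning has been given to ``mirror'', Conjectures \ref{conj:LGtoLG} and \ref{conj:CYtoCY} ``should be viewed as a guideline instead of a mathematical statement''. What the paper \emph{does} prove is the cohomological incarnation, Corollary \ref{cor:MS}, and for that your first plan --- the chain
\[
H^{p,q}_{\CR}\big([X_W/\wt G]\big)\ \cong\ \Hcal_{W,G}\ \cong\ \Qcal_{W^\vee,G^\vee}\ \cong\ H^{N-2-p,q}_{\CR}\big([X_{W^\vee}/\wt{G^\vee}]\big)
\]
--- is exactly the paper's argument: the two outer isomorphisms are Theorem \ref{thm:LGCYstates} (the cohomological LG--CY correspondence), and the middle one is Krawitz's Theorem \ref{thm:Kra}. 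So at the level at which the paper actually establishes anything, your proposal matches it.

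Two small corrections. First, Theorem \ref{thm:LGCYstates} is stated and proved for \emph{any} $A$-admissible $G$, not only $G=\langle j_W\rangle$; you do not need to extend it. Second, the stabilizer-in-codimension-two hypothesis you flag is used only to pass from the orbifold $[X_W/\wt G]$ to a crepant resolution (as in Corollary \ref{cor:coarsecohom}); it plays no role in the orbifold statement itself. Also, the middle bigraded-vector-space isomorphism is Krawitz's theorem alone; the Kaufmann--Krawitz contribution is the Frobenius \emph{algebra} upgrade, which is a separate (and stronger) assertion.

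Your alternative route --- bypassing the LG models and computing both Chen--Ruan Hodge diamonds directly from the combinatorics of $E_W$ and $G\mapsto G^\vee$ --- is \emph{not} what the paper does, and the paper gives a reason: as noted in Remark \ref{rem:Kraproof}, Kreuzer--Skarke's decomposition of invertible polynomials, on which Krawitz's proof rests, has no analogue on the CY side, so the LG detour is the point, not an accident. Your alternative might work and would be interesting, but it is a different project.

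Your discussion of the quantum level is accurate and in line with the paper: the full statement is left open, reformulated as Conjectures \ref{conj:LGCY}, \ref{conj:MSCYtoCY}, \ref{conj:MSLGtoCY}, with partial results only in the Gorenstein case and for $G=\langle j_W\rangle$ via \cite{ChRu} and \cite{CIR}. In short, there is no ``paper's own proof'' of Conjecture \ref{conj:CYtoCY} to compare against beyond the cohomological corollary, and for that your main approach coincides with the paper's.
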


   \begin{rem} Since we have not given a precise meaning to
   to the notion of mirror,
   the above conjectures should be viewed as a
   guideline instead of a mathematical statement.
   In the next section the above conjectures are turned into
   precise mathematical statements.
   One can regard them as relations in
   terms of state spaces. Then,
   they may be read as follows:
   the $A$ model state space of $(W,G)$ is isomorphic to the $B$ model
   state space to $(W^{\vee},G^{\vee})$. Although  elementary, the claim
   is nontrivial. For example
   it does not fit in Borisov--Batyrev duality of Gorenstein cones \cite{BB}.
   This happens systematically when $W$ is not Fermat as was first
   noted in \cite{CL}.
   It was proven by Krawitz.
      \end{rem}
     \begin{thm}[Krawitz \cite{Kr}]\label{thm:Kra}
     Suppose that $W$ is invertible. Then, there is an isomorphism between bigraded vector spaces
     $$\Hcal_{W, G}\cong \Qcal_{W^{\vee}, G^{\vee}}.$$
     \end{thm}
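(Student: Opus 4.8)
The plan is to exploit the classification of invertible nondegenerate polynomials recalled above: by the decomposition property of cohomological field theories and the compatibility of both state-space constructions with the splitting of $W$ into summands in disjoint variables, it suffices to establish the isomorphism when $W$ is a single Fermat, loop, or chain polynomial. Concretely, first I would check that the left-hand side $\Hcal_{W,G}$ and the right-hand side $\Qcal_{W^{\vee},G^{\vee}}$ are each compatible with such a decomposition: Remark \ref{rem:Gspace} identifies $\Hcal_{W,G}$ with $dx_1\wedge\dots\wedge dx_N\cdot\Qcal_W$ as a $G$-space, while the $B$ model state space of Definition \ref{defn:Bstates} is manifestly built from local algebras $\Qcal_{W_g}$, and both the direct-sum-over-group-elements structure and the $\age$-shifted bigradings (Remarks \ref{rem:narrowfirst}, \ref{rem:Bgrading}) behave multiplicatively under $W=W_1+W_2$, $G=G_1\times G_2$. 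Since transposition also respects the three types, this reduces the claim to the atomic cases.

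Next, for each atomic type I would write down explicit monomial bases. On the $A$ side, $\Hcal_{W,G}=\bigoplus_{g\in G}\big(H^{N_g}(\CC^N_g,W^{+\infty}_g)\big)^G$, and by Remark \ref{rem:Gspace} the summand for $g$ is spanned by those monomials $\prod_j x_j^{m_j}\,dx_{j_1}\wedge\dots$ (over the variables fixed by $g$) in $\Qcal_{W_g}$ that are $G$-invariant under the twisted action $\Diag(\la)\cdot(\prod x_j^{m_j}\,dx)= \prod\la_j^{m_j+1}\cdot(\prod x_j^{m_j}\,dx)$. On the $B$ side, $\Qcal_{W^{\vee},G^{\vee}}=\bigoplus_{h\in G^{\vee}}(\Qcal_{W^{\vee}_h})^{G^{\vee}}$, spanned by $G^{\vee}$-invariant monomials in $\Qcal_{W^{\vee}_h}$ with the untwisted grading $x_j\mapsto \ol q_j$ but the twisted symmetry action $\prod x_j^{m_j}\mapsto \prod\la_j^{m_j+1}$ of Section \ref{sect:classMS}. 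The key combinatorial step is to produce the bijection on index sets: I would send the pair $(g,\text{monomial }\prod_{j\in\mathrm{Fix}(g)} x_j^{m_j})$ to a pair $(h,\prod_{j\in\mathrm{Fix}(h)}x_j^{m_j'})$ where $h\in G^{\vee}$ is read off from the exponent vector $(m_j)$ via the column symmetries $\rho_j^{\vee}$ of $\Aut(W^{\vee})$ (as in \eqref{eq:dualgroup}), using that "$\prod x_j^{m_j}$ is $G$-invariant" is exactly the defining condition for the corresponding element to lie in $G^{\vee}$; the exponents $m_j'$ on the $B$ side are then determined dually by which variables $h$ fixes. One must verify this is well defined and invertible on each atomic type by a direct check against \eqref{eq:Ftype}, \eqref{eq:Ltype}, \eqref{eq:Ctype}, using \eqref{eq:chargesfromM} to read off charges from $E_W^{-1}$.

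Finally I would check that this bijection is bidegree-preserving. Using $(\deg_A^+,\deg_A^-)=(p,N_g-p)+(\age(g),\age(g))-(q,q)$ and $(\deg_B^+,\deg_B^-)=(p',p')+(\age(h),\age(h^{-1}))-(q,q)$, together with the Chen--Ruan relation $\age(g)+\age(g^{-1})=N-N_g$, the matching of bigradings amounts to an identity relating the degree of a monomial in $\Qcal_{W_g}$ to the $\age$ of the transposed group element $h$, and symmetrically; this is the classical "$A_g \leftrightarrow B_{g^{\vee}}$" exchange already observed by Intriligator--Vafa and proved in this generality by Krawitz. The main obstacle I anticipate is precisely this last point --- making the index-set bijection genuinely canonical (independent of the monomial basis chosen inside a $\Qcal_{W_g}$, which is not in general one-dimensional when broad sectors occur) and simultaneously checking bidegree compatibility for the chain type, where the inverse matrix $E_W^{-1}$ has alternating-sign entries and $\mathrm{Fix}(g)$ interacts with the "tail" variable $x_N$ in a way that must be tracked carefully. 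Fermat is immediate, the loop type is symmetric and essentially self-dual, but the chain type is where the bookkeeping is genuinely delicate and is the heart of Krawitz's argument.
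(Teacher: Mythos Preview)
Your approach coincides with the one the paper sketches in Remark~\ref{rem:Kraproof} (the paper itself does not give a proof, but refers to Krawitz and records the explicit map
$\bigwedge_i x_i^{l_i}dx_i\mid \prod_i \rho_i^{k_i+1}\rangle\mapsto
\bigwedge_i x_i^{k_i}dx_i\mid \prod_i (\rho_i^{\vee})^{l_i+1}\rangle$
and the use of the Kreuzer--Skarke classification). Your description of the bijection and of where the difficulty lies (the chain type) is accurate.

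One point needs repair: you write ``$G=G_1\times G_2$'' and invoke the tensor decomposition of state spaces, but an arbitrary $A$-admissible subgroup $G\subseteq\Aut(W_1)\times\Aut(W_2)$ need not split as a product, so the state spaces do not literally tensor-factor and the reduction to atomic $W$ as you phrase it does not go through. The correct way (which is what Krawitz does and what the paper's formula suggests) is to define the monomial/group-element swap globally on all of $W$ at once, and then use the Kreuzer--Skarke decomposition only to verify that the map lands in the right place and preserves the bigrading, checking the combinatorics variable-block by variable-block. This is a reorganization rather than a new idea, but as written your first paragraph would fail for, say, a diagonal $\ZZ_d$ sitting inside $\Aut(W_1)\times\Aut(W_2)$.
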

\begin{rem}\label{rem:Kraproof}
       The isomorphism in the theorem is interesting in its own right.
The basic idea is to exchange monomials with group elements.
As already mentioned in Remark \ref{rem:Gspace},
$\Hcal_{W, G}$ and $\Qcal_{W,G}$ are isomorphic
as a consequence of \cite{OS} and \cite{Wa1}.
This isomorphism, however, does not respect the gradings.
Let us express an element of $\Qcal_{W,G}$
as $\bigwedge x_i^{l_i}dx_i \mid \prod_i \rho^{k_i+1}_i \rangle$ where
      $\bigwedge x_i^{l_i}dx_i $ is fixed by $\prod_i \rho^{k_i+1}_i$.
Here, we use the presentation of an element of $\Aut(W)$
      in terms of the generators $\rho_i$. Then the mirror map
in Krawitz's theorem \cite{Kr} is of the form
      $$\bigwedge_i x_i^{l_i}dx_i \mid \prod_i \rho^{k_i+1}_i \Big\rangle
\longmapsto
\bigwedge_i x_i^{k_i}dx_i \mid \prod_i \rho^{l_i+1}_i \Big\rangle.$$
      The proof uses Kreuzer and Skarke's decomposition of invertible polynomials.
Note that there is no analogue decomposition on the
  CY side. This is the main reason why
the LG side is easier to work with in this case.
On the other hand, as we will discuss in the last part of this section
the LG-CY correspondence sets a connection between
the two conjectures given above.
\end{rem}

   \begin{rem}
   Recently, Borisov has found \cite{Bo}
   a new proof of the theorem above via vertex algebras. This
   approach may actually lead to a unified setup including both
   Berglund--H\"ubsch and Borisov--Batyrev duality.
   \end{rem}

   Beyond state spaces the situation is as follows. On the $A$ model side, we have  rigorous theories,
   $\RW$ theory for the LG model and $\GW$ theory for the CY model. The
   counterpart of these theories for the $B$
   model side is incomplete. The genus-zero theory
   should correspond to a Frobenius manifold structure; however, unless $G$ is a trivial group,
   it appears delicate to define the suitable $G$-orbifold version extending
   the state space $\Qcal_{W,G}=\bigoplus_{g\in G} (\Qcal_{W_g})^G$.
   Due to Kaufmann--Krawitz \cite{KA2, Kr},
   we can provide at least an orbifold Frobenius algebra
   construction; \emph{i.e.}, a ring structure on the state space.

   Suppose that $W$ is invertible.
   We define a product on $\bigoplus_{{h}} \Qcal_{W_{{h}}}$
   and then take $G$-invariants.
    The product has the
    properties
    $$\Qcal_{W_{{h}_1}}\otimes \Qcal_{W_{{h}_2}}\rightarrow
    \Qcal_{W_{{h}_1{h}_2}}.$$
    as well as respecting the $\Qcal_W$-module structure
    in the sense that
    $$\alpha  {\pmb 1}_{g_1}* \beta {\pmb 1}_{g_2}=\alpha\beta {\pmb 1}_{g_1}*
    {\pmb 1}_{g_2},$$
    where $\alpha, \beta\in \Qcal_{W_e}$ and ${\pmb 1}_g$ is the unit in
    the algebra $\Qcal_{W_g}$.

    Let
    $$
    {\pmb 1}_{g_1}* {\pmb 1}_{g_2}=\gamma_{g_1, g_2} {\pmb 1}_{g_1g_2},$$
where
\begin{equation}\label{eq:quant_prod}
\gamma_{g,h}
\frac{\hess (W \mid{\CC^N_g \cap \CC^N_h})}{\mu({W\mid{\CC^N_g \cap \CC^N_h}})}
=
\begin{cases}\frac{\hess (W\mid{\CC^{N}_{gh}})}{\mu({W\mid{\CC^N_{gh}}})} &
\text{if $\CC^N_g\cup \CC^N_h\cup \CC^N_{gh}=\CC^N$}\\
0& \text{otherwise.}
\end{cases}
\end{equation}
(We use the convention that
$\hess (W|{\{0\}})=1$.)

    \begin{thm}[Kaufmann \cite{KA2}, Krawitz \cite{Kr}]
     When $W$ is invertible and $G$ is $B$-admissible, the operation
     $*$ is associative, ${\pmb 1}_e*$ operates as the identity,
     and $*$ respects the $G$-action and the double
     grading.
     Therefore, the space of $G$-invariants
    $\Qcal_{W,G}$ is equipped with a Frobenius algebra structure.
\end{thm}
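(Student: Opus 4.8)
The plan is to realize the graded vector space $\bigoplus_{h\in\Aut(W)}\Qcal_{W_h}$, equipped with the family of residue pairings and with the operation $\ast$, as a \emph{$G$-Frobenius algebra} (in Kaufmann's sense \cite{KA2}) for the group $\Aut(W)$, and then to descend to $G$-invariants. Kaufmann's reconstruction machinery reduces the verification to a short list of conditions on the structure constants: (i) each $\Qcal_{W_h}$ is a module over $\Qcal_{W_e}=\Qcal_W$ via the natural restriction to the fixed locus, $\ast$ is $\Qcal_W$-bilinear, and the class $\gamma_{g,h}$ is well defined by \eqref{eq:quant_prod} as an element of $\Qcal_{W_{gh}}$; (ii) the cocycle-type identity $\gamma_{g_1,g_2}\,\gamma_{g_1g_2,g_3}=\gamma_{g_2,g_3}\,\gamma_{g_1,g_2g_3}$ holds in $\Qcal_{W_{g_1g_2g_3}}$ once the two module structures are matched, so that $\ast$ is associative; (iii) $\ast$ is compatible with the bigrading of Remark \ref{rem:Bgrading}; (iv) the trace axiom $\langle\alpha\ast\beta,\delta\rangle=\langle\alpha,\beta\ast\delta\rangle$. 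Granting (i)--(iv) for $\Aut(W)$, one descends: $\ast$ is manifestly $\Aut(W)$-equivariant, hence $G$-equivariant, so it preserves $G$-invariants and $\Qcal_{W,G}=\bigoplus_{h\in G}(\Qcal_{W_h})^G$ is a $\ast$-subalgebra; the restriction of the pairing remains nondegenerate because $G$ is finite (average over $G$); the unit is ${\pmb 1}_e$ since $\gamma_{e,h}=1$; and all asserted properties are inherited.

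For items (i), (iii) and (iv) the key input is the Kreuzer--Skarke normal form \eqref{eq:Ftype}--\eqref{eq:Ctype}: setting a subset of the variables to zero turns a Fermat, loop or chain atom into a sum of atoms of the same three types, so $W_h=W|_{\CC^N_h}$ is again nondegenerate invertible; and for $g,h$ obeying the covering condition $\CC^N_g\cup\CC^N_h\cup\CC^N_{gh}=\CC^N$ a case analysis on atoms exhibits the ratio of Hessians in \eqref{eq:quant_prod} as a genuine element of $\Qcal_{W_{gh}}$ (when the covering condition fails, one puts $\ast=0$). For (iii) one combines the identity $\deg\hess(W|_V)=\sum_{x_j\in V}(1-2q_j)$ (the central charge of the restriction) with the Chen--Ruan relation $\age(g)+\age(g^{-1})=N-N_g$, already invoked in Definition \ref{defn:Apairing}: plugging these into the formula for $(\deg_B^+,\deg_B^-)$ turns $\deg_B^\pm(\gamma_{g,h}\,{\pmb 1}_{gh})=\deg_B^\pm({\pmb 1}_g)+\deg_B^\pm({\pmb 1}_h)$ into a bookkeeping identity, and $\Qcal_W$-bilinearity (multiplication by $x_j$ shifts both $\deg_B^\pm$ by $q_j$) propagates it to all classes. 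For (iv) the trace axiom reduces, sector by sector, to the Frobenius property of the residue pairing on each local algebra $\Qcal_{W_h}$ together with the symmetry of $\gamma_{g,h}$ built into \eqref{eq:quant_prod}; $B$-admissibility of $G$ enters here to ensure the relevant sectors pair up.

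The main obstacle is the associativity identity (ii). The product formula \eqref{eq:quant_prod} is deceptively simple: for a triple $g_1,g_2,g_3$ the fixed loci, their pairwise intersections and the partial products $\CC^N_{g_ig_j}$ may be configured in many ways, the covering conditions along the two bracketings must be shown simultaneously satisfiable, and one must track how the complementary Hessian factors recombine. I would handle this by reducing to atoms: because $\Aut(W_1\oplus W_2)=\Aut(W_1)\times\Aut(W_2)$ on disjoint variables, and Hessians, Milnor numbers and the covering condition are all multiplicative under Thom--Sebastiani sums, the full algebra $\bigoplus_{h\in\Aut(W)}\Qcal_{W_h}$ attached to $W_1\oplus W_2$ is the (graded) tensor product of the algebras attached to $W_1$ and to $W_2$. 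Hence it suffices to verify (ii) when $W$ is a single Fermat, loop or chain, and there $\Aut(W)$, all fixed loci (obtained by deleting variables) and all the Hessians in play (monomials) are explicit, so the cocycle identity collapses to a finite mechanical check in each of the three families; assembling the atomic verifications through the tensor decomposition yields associativity and completes the proof. (Once Theorem \ref{thm:Kra} is available one might instead try to import a ring structure across the mirror isomorphism, but the $A$-side FJRW product is far from elementary, so the direct route above is more economical.)
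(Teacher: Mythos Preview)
The paper does not supply its own proof of this theorem: it is a survey, and the statement is simply attributed to Kaufmann \cite{KA2} and Krawitz \cite{Kr} with no argument given. There is therefore nothing in the paper to compare your proposal against.

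That said, your outline is a faithful reconstruction of the strategy actually carried out in those references. Kaufmann's paper \cite{KA2} supplies the $G$-Frobenius algebra formalism and the reconstruction/cocycle framework you invoke in (i)--(iv); Krawitz \cite{Kr} does precisely the reduction to Kreuzer--Skarke atoms and the case-by-case verification of associativity that you describe as the ``main obstacle''. Your Thom--Sebastiani tensor decomposition is the correct mechanism for reducing to atoms, and your observation that $B$-admissibility ($G\subseteq SL_W$) is what makes the trace axiom close up is on target. One small caution: in step (i) you assert that $W_h=W|_{\CC^N_h}$ is again nondegenerate invertible after setting a subset of variables to zero; for chain atoms this is not quite automatic (deleting variables from a chain can leave a polynomial that fails nondegeneracy unless one is careful about which end of the chain is truncated), and Krawitz handles this by tracking which coordinate subsets actually arise as fixed loci of elements of $\Aut(W)$ rather than arbitrary deletions. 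With that refinement, your sketch matches the published arguments.
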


Since the cohomological field theory attached to FJRW theory in the previous section
automatically yields a Frobenius algebra structure for $\Hcal_{W,G}$,
it is natural to
further interpret Conjecture \ref{conj:LGtoLG} as a statement relating
the Frobenius algebra structure $\Hcal_{W,G}$ on the $A$ side to
the Frobenius algebra structure $\Qcal_{W^{\vee},G^{\vee}}$ on the $B$ side.
Krawitz's checked that his vector space isomorphism
for the case $G=\Aut(W)$
$$\Hcal_{W,\Aut(W)}\cong \Qcal_{W^{\vee},(e)}$$
respects the Frobenius algebra structure.
He also provided evidence for the same statement for
$G\subseteq SL_W$ and $W$ of loop type and for other special cases related to
Arnold's strange duality.
We refer to \cite{Kr} for precise statements.

\begin{rem}We can regard these isomorphisms of Frobenius algebra structures as evidence
for an isomorphism between Frobenius manifolds attached to
$(W,G)$ on the $A$ side and to $(W^{\vee},G^{\vee})$ on the $B$-side.
On the other hand we point out again,
that --- unless the group is trivial --- the notion of Frobenius manifold
for pairs of the form $(W,G)$ still lacks a rigorous definition. The problem consists in
orbifolding the Frobenius manifold structure that can be already
defined over the deformation spaces ${\rm Def}(W)$.
As far as we know, the same issue
arises for the $B$ model of Calabi--Yau varieties as soon as
they are equipped with a nontrivial orbifold structure.

\begin{prob}\label{prob:orbifolding}
Orbifold the Frobenius manifold ${\rm Def}(W)$ as well
as the Calabi--Yau $B$ model for $X_W$ and prove a higher genus version of 
the LG-CY correspondence between them.
\end{prob}

This may well lead to a $B$ model version of the
LG-CY correspondence.
\end{rem}

\subsection{LG-CY correspondence}
Let us consider both $A$ model state spaces of FJRW theory and of GW theory.
    The simplest conjecture from the LG-CY correspondence is
    the following {\em cohomological LG-CY correspondence conjecture}.

    \begin{conj}\label{conj:cohomLGCY} Suppose that the pair $(W, G)$ is of Calabi--Yau
    type; \emph{i.e.}~$W$ is nondegenerate (not necessarily invertible) with
    $\sum_j q_j=1$ and $G$ contains $\langle j_W\rangle $ and lies in $SL_W$.
    Then, there is a bigraded
    vector space isomorphism
    \begin{equation}\label{eq:cohomLGCY}
\Hcal_{W, G}^{*,*}\cong H^{*,*}_{\CR}\left([X_W/\wt{G}]; \CC\right),
    \end{equation}
    where the right-hand side is
    Chen--Ruan orbifold cohomology of the stack $[X_W/\wt{G}]$ with $\wt G=G/\langle j_W\rangle$.
    \end{conj}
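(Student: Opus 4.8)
The plan is to establish the cohomological LG--CY correspondence \eqref{eq:cohomLGCY} by computing both sides explicitly as bigraded vector spaces and then exhibiting a degree-preserving bijection between their natural additive bases. First I would unpack the left-hand side using the decomposition \eqref{eq:decompstates}: for each $g\in G$ the summand $H^{N_g}(\CC^N_g,W^{+\infty}_g;\CC)^G$ is, by Remark \ref{rem:Gspace} together with \cite{OS},\cite{Wa1}, identified with the $G$-invariant part of $dx_{j_1}\wedge\cdots\wedge dx_{j_{N_g}}\cdot\Qcal_{W_g}$, where the $x_{j_\bullet}$ are the coordinates fixed by $g$; its bigrading inside $\Hcal_{W,G}$ is $(p+\age(g)-q,\,N_g-p+\age(g)-q)$ for a class of polynomial degree $p$. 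Thus $\Hcal_{W,G}$ is a completely combinatorial object: a sum over $g\in G$ of $G$-invariant monomial bases of $\Qcal_{W_g}$, each placed in a prescribed bidegree.

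Next I would compute the right-hand side. The orbifold $[X_W/\wt G]$ with $\wt G=G/\langle j_W\rangle$ is a quasismooth Calabi--Yau hypersurface-quotient, and its Chen--Ruan cohomology decomposes over conjugacy classes (here all elements, $\wt G$ being abelian) of $\wt G$, i.e.\ over cosets $g\langle j_W\rangle$ in $G$: $H^{*,*}_{\CR}([X_W/\wt G])=\bigoplus_{g\in \wt G} H^{*-\age(g),*-\age(g)}(X_{W_g}^{\,\wt G\text{-fixed}})^{\wt G}$. Each fixed locus $[X_W/\wt G]_g$ is again (the quotient of) a quasismooth weighted-projective hypersurface cut out by $W_g$ in the subspace of fixed coordinates, so its cohomology splits into an ``ambient'' part — powers of the hyperplane class $H$, in bidegrees $(k,k)$ — and a ``primitive'' (middle-dimensional) part. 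The primitive part of $H^*(X_{W_g})$ is governed by the Griffiths residue description: it is the $G$-invariant part of the Jacobian ring $\Qcal_{W_g}$ shifted by the residue formula, which on the level of Hodge pieces coincides exactly with the monomial description obtained for $\Hcal_{W,G}$ above. The unstable/low-degree and top-degree pieces (the $H^k$ towers and their duals) must be matched separately; on the LG side these correspond precisely to the narrow contributions from $g\in\langle j_W\rangle$ and closely related elements, where $\Qcal_{W_g}=\CC$, and a careful age count shows $\age(j_W^k)-q = (\text{shift placing }{\pmb 1}_{j_W^k}\text{ in bidegree }(k{-}1,k{-}1)\text{ or its dual})$, reproducing the $1,H,\dots,H^{N-2}$ tower with the correct multiplicities (and the codimension-two stabilizer hypothesis alluded to after \eqref{eq:mirror} is what guarantees no extra classes appear).

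The key mechanism making the two computations agree is an age-matching identity: for $g\in G$, if $\age(g)$ is the age on $\CC^N$ used in Definition \ref{defn:Astates} and $\wt\age(g)$ is the age computed for the $\wt G$-action on $X_W$ (equivalently on the ambient weighted projective space), then the quasihomogeneity/Calabi--Yau relation $\sum_j q_j=1$ forces $\wt\age(g)=\age(g)-(\text{contribution of the hypersurface normal direction})$, and the overall shift by $q=\sum q_j=1$ in $\Hcal_{W,G}$ precisely absorbs the discrepancy coming from the hyperplane-section normal bundle. Concretely one checks this monomial-by-monomial: a monomial $\prod x_j^{m_j}$ of $\Qcal_{W_g}$ contributes to $\Hcal_{W,G}$ in bidegree $(\sum(m_j{+}1)q_j+\age(g)-1,\ \dots)$, and the Griffiths residue of the corresponding ambient polynomial form of the same monomial-degree lands in the matching Hodge piece of the primitive cohomology of $[X_{W_g}/\wt G]$ — this is the classical Griffiths--Steenbrink computation for weighted-projective hypersurfaces, made $\wt G$-equivariant. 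I would organize the whole argument as: (i) reduce via \cite{OS},\cite{Wa1} to a statement about $G$-invariant Jacobian rings; (ii) on the geometric side, invoke the equivariant Griffiths residue theorem and the orbifold (Poincaré) residue for quotients of quasismooth hypersurfaces; (iii) prove the age identity; (iv) reconcile the ``ambient/narrow'' towers. The main obstacle I anticipate is step (iv) combined with the codimension-two stabilizer subtlety: making the identification between $Y=(X_W/G)^{\rm res}$, the stack $[X_W/\wt G]$, and the FJRW narrow sector genuinely canonical — rather than merely a dimension count — requires controlling the contributions of fixed loci where $\wt G$-stabilizers jump, and checking that broad LG states (kernel of $i_*$ in Remark \ref{rem:narrowfirst}) match exactly the non-ambient primitive cohomology of the twisted sectors without over- or under-counting; this is where the hypothesis ``not necessarily invertible'' prevents us from simply quoting Theorem \ref{thm:Kra}, so the proof must run through the residue/age bookkeeping directly.
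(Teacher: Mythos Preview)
Your overall strategy matches the paper's: decompose both sides over $g\in G$, identify the broad LG summands with the variable (primitive) cohomology of the twisted sectors via the Jacobian ring / Griffiths--Steenbrink residue description, and then match the narrow LG states to the fixed (ambient) classes coming from the hyperplane towers. The paper confirms that the broad--variable matching is the straightforward half. Where your sketch diverges is in the narrow--fixed half: you propose to carry this out by a direct age-matching identity and bidegree bookkeeping, whereas the paper singles this out as ``far trickier'' and does \emph{not} do it by pure age arithmetic---it invokes instead the combinatorial model of Boissi\`ere, Mann, and Perroni \cite{BMP} for the Chen--Ruan cohomology of weighted projective spaces in order to organize the fixed classes of all twisted sectors simultaneously. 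Your anticipated obstacle in step~(iv) is exactly the right one, and the BMP model is the tool the paper uses to resolve it; without that model (or an equivalent combinatorial bijection), the age identity alone does not obviously produce a bijection between narrow group elements and ambient hyperplane classes across all sectors, especially in the non-Gorenstein case where the hypersurface fails to be transverse to coordinate subspaces. One small confusion: the codimension-two stabilizer remark after \eqref{eq:mirror} concerns the comparison between the quotient stack and a crepant resolution, and plays no role in the LG--CY state space isomorphism itself.
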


    This conjecture is certainly not true without assuming that $W$ is of Calabi--Yau type.
    For instance a quartic polynomial in five variables provides an immediate
    counterexample. The Calabi--Yau condition plays a crucial role in the proof of the
    correspondence. In physics, it reflects a to a supersymmetry condition which
    is the source of the physical LG-CY correspondence.
    Even if
    the formula in the statement above makes sense even for
    $G\not \subseteq SL_W$, this
    indicates that the isomorphism may fail without imposing a
    Calabi--Yau condition to $G$.
    Surprisingly the authors found that the above statement still holds
    when $G$ is not contained in $SL_W$.
    We will return to this observation  in the end of the paper where
    we present a higher genus correspondence
    holding precisely for $G\not\subseteq SL_W$.
    \begin{thm} \label{thm:LGCYstates}
    Suppose that $W$ is of Calabi--Yau type and
    that $G$ contains $j_W$ (no upper bound for $G$).
    Then the above cohomological LG-CY correspondence holds.
    \end{thm}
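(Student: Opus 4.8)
The plan is to compare the two bigraded vector spaces \emph{sector by sector}, feeding in the classical Griffiths--Steenbrink--Dolgachev description of the cohomology of a quasismooth hypersurface in a weighted projective stack. As a preliminary reduction, by the deformation invariance established with Theorem~\ref{thm:CohFT} the left-hand side depends only on the charges $(q_1,\dots,q_N)$ and on $G$, and by deformation invariance of orbifold Hodge numbers the right-hand side likewise depends only on $(w_1,\dots,w_N)$, $d=\abs{j_W}$ and $\wt G$ (all quasismooth degree-$d$ hypersurfaces in $\PP(w)$ being $\wt G$-equivariantly deformation equivalent); one is therefore free to work with a convenient $G$-invariant nondegenerate representative of $W$, and when $W$ is invertible the Kreuzer--Skarke classification \cite{KS} further reduces the combinatorics to Thom--Sebastiani sums of Fermat, loop and chain polynomials.

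First I would unwind the left-hand side. By \eqref{eq:decompstates}, $\Hcal_{W,G}=\bigoplus_{g\in G}H^{N_g}(\CC^N_g,W^{+\infty}_g;\CC)^G$, and by Remark~\ref{rem:Gspace} (the theorem of Orlik--Solomon and Wall) the $g$-summand is canonically the space of $G$-invariants of $\bigwedge^{N_g}(\CC^{I_g})^{\vee}\otimes\Qcal_{W_g}$, where $I_g$ is the set of coordinates fixed by $g$ and $W_g=W\rest{\CC^{I_g}}$. In particular it carries the weight-$N_g$ Hodge structure --- pure, because $G$-invariance forces $j_W$-invariance --- whose Hodge filtration is read off from the grading of the Jacobian ring $\Qcal_{W_g}$, and its bigrading inside $\Hcal_{W,G}$ is that one shifted by $(\age(g),\age(g))-(q,q)$ as in Definition~\ref{defn:Astates}. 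When $g$ fixes only the origin the summand is the one-dimensional narrow class ${\pmb 1}_g$; otherwise it is the $G$-invariant part of the Milnor ring of the subsingularity $W_g$.

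Next I would unwind the right-hand side. Since $\wt G$ is abelian, $H^{a,b}_{\CR}([X_W/\wt G])=\bigoplus_{\tilde h\in\wt G}H^{a-\age(\tilde h),\,b-\age(\tilde h)}(X_W^{\tilde h};\CC)^{\wt G}$, so the point is to describe the fixed substack $X_W^{\tilde h}$. A point $[x]\in X_W$ is fixed by $\tilde h$ precisely when some lift $g\in G$ of $\tilde h$ fixes $x$ pointwise in $\CC^N$ --- the scaling ambiguity in ``$[gx]=[x]$'' being exactly $\langle j_W\rangle$ --- so $X_W^{\tilde h}$ is a disjoint union of quasismooth weighted-projective hypersurfaces, each of the form $X_{W_g}=\{W_g=0\}\subset\PP\big((w_j)_{j\in I_g}\big)$ of dimension $N_g-2$ for a suitable lift $g$; lifts differing by a power of $j_W$ either give the same component or, when they fix different coordinate sets, distinct components indexed by the admissible scaling eigenvalues, and some of the resulting components are empty. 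For each nonempty component one invokes the weighted/orbifold Griffiths--Steenbrink--Dolgachev formula $H^*(X_{W_g})=H^*_{\mathrm{amb}}(X_{W_g})\oplus H^{N_g-2}_{\mathrm{prim}}(X_{W_g})$, where the ambient part consists of the $N_g-1$ powers of the hyperplane class (on which $\wt G$ acts trivially) and $H^{p,N_g-2-p}_{\mathrm{prim}}(X_{W_g})$ is the piece of $\Qcal_{W_g}$ of the appropriate degree, $\wt G$-equivariantly. (One records here that $W_g$ is again a nondegenerate quasihomogeneous polynomial in $(x_j)_{j\in I_g}$, which holds for $g\in\Aut(W)$, so that $X_{W_g}$ is quasismooth.)

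The matching then runs as follows. For each $\tilde h$ and each nonempty component $X_{W_g}\subseteq X_W^{\tilde h}$: its $\wt G$-invariant primitive cohomology is, by Griffiths--Steenbrink, the very space of $G$-invariant graded pieces of $\Qcal_{W_g}$ that forms the sub-top-form part of the $g$-sector on the LG side; and its ambient classes $1,H,\dots,H^{N_g-2}$ (automatically $\wt G$-invariant) get matched with a corresponding collection of narrow summands of $\Hcal_{W,G}$ --- group elements that, after multiplication by a suitable power of $j_W$, fix only the origin. The Calabi--Yau condition $\textsum_j q_j=1$, together with $\age(g)+\age(g^{-1})=N-N_g$ and $j_W\in G$, is exactly what makes the bidegrees line up: the bidegree in $\Hcal_{W,G}$ of the narrow class matched to $H^k$ becomes $(k,k)$ re-shifted into the $\tilde h$-sector by $\age$, i.e.\ the bidegree of $H^k$ on $X_{W_g}$ inside $H^{*,*}_{\CR}$, and likewise for the primitive pieces. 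Since the argument never uses $G\subseteq SL_W$, this is where the stated strengthening of Conjecture~\ref{conj:cohomLGCY} comes from. The hard part will be the global combinatorial bookkeeping of this last step: one must show that, summed over all $\tilde h\in\wt G$ and all fixed-locus components, the ambient classes are in \emph{bijection} --- with matching bidegrees and matching invariants --- with exactly the narrow-type sectors; in the Fermat case this is the lattice-point generating-function identity of \cite{ChRu}, and in general one leans on the Kreuzer--Skarke normal form to control $\Aut(W)$ and its action on coordinate subspaces. A secondary technical point is that $X_W$ and the $X_{W_g}$ are genuinely stacky weighted-projective hypersurfaces, so one needs the orbifold refinement of Griffiths--Steenbrink and care with the non-Gorenstein cases.
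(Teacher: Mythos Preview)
Your plan is essentially the paper's own approach: split each side into narrow/broad versus fixed/variable (primitive), match broad with variable via the Jacobian-ring description of primitive cohomology of each sub-hypersurface, and isolate the narrow-versus-ambient matching as the hard combinatorial step. The paper handles that last step not via a Kreuzer--Skarke reduction but via the Boissi\`ere--Mann--Perroni model \cite{BMP} for Chen--Ruan cohomology of weighted projective spaces, with the full argument carried out in the companion paper \cite{CR_AIM}.
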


    The main application is the folllowing \emph{classical mirror symmetry}, which is a
    direct consequence of
    the cohomological LG-CY correspondence and
    Krawitz's mirror symmetry theorem of type ${\rm LG}{{\mid}}\reflectbox{\rm LG}$.

\begin{cor}\label{cor:MS}
Suppose that $W$ is invertible and that
the pair $(W, G)$ is of Calabi--Yau type as in Conjecture \ref{conj:cohomLGCY}.
Automatically, also the pair $(W^{\vee}, G^{\vee})$ is of Calabi--Yau type.
Furthermore, the Calabi--Yau orbifolds $[X_W/\wt G]$ and
$[X_{W^{\vee}}/\wt G^{\vee}]$ form a mirror pair in the classical sense;
\emph{i.e.}~we have the following isomorphism between Chen--Ruan cohomology groups
$$H_{\CR}^{p,q}\left([X_W/\wt G];\CC\right)\cong H_{\CR}^{N-2-p,q}\left([X_{W^{\vee}}/\wt G^{\vee}];\CC\right).$$
\end{cor}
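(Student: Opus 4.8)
The plan is to deduce the corollary directly by chaining the two results already available in the excerpt. First I would apply Theorem~\ref{thm:LGCYstates} to the pair $(W,G)$, which is of Calabi--Yau type by hypothesis: this gives a bigraded isomorphism
$$\Hcal_{W,G}^{p,q}\cong H^{p,q}_{\CR}\left([X_W/\wt G];\CC\right).$$
Since $W$ is invertible, the properties listed after \eqref{eq:dualgroup} guarantee that $(W^{\vee},G^{\vee})$ is again of Calabi--Yau type (transposition preserves the Calabi--Yau condition on $W$ and exchanges $\langle j_W\rangle$ and $SL_W$, hence the inclusions $\langle j_{W^{\vee}}\rangle\subseteq G^{\vee}\subseteq SL_{W^{\vee}}$ hold). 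Applying Theorem~\ref{thm:LGCYstates} to $(W^{\vee},G^{\vee})$ therefore yields
$$\Hcal_{W^{\vee},G^{\vee}}^{p,q}\cong H^{p,q}_{\CR}\left([X_{W^{\vee}}/\wt G^{\vee}];\CC\right).$$

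Next I would invoke Krawitz's theorem (Theorem~\ref{thm:Kra}), which provides an isomorphism of bigraded vector spaces $\Hcal_{W,G}\cong \Qcal_{W^{\vee},G^{\vee}}$, together with the dual statement $\Hcal_{W^{\vee},G^{\vee}}\cong \Qcal_{W,G}$ obtained by transposing (recall $(G^{\vee})^{\vee}=G$). The only genuine point to verify is how these isomorphisms interact with the two Hodge bigradings. By Remark~\ref{rem:Bgrading}, the $B$-model bidegree of a class sitting in the $g$-sector of $\Qcal$ involves the asymmetric shift $(\age(g),\age(g^{-1}))$, whereas the $A$-model bidegree in Definition~\ref{defn:Astates} (made explicit in the remark following it) uses the symmetric shift $(\age(g),\age(g))$ together with the Hodge bidegree $(p,N_g-p)$ of the underlying cohomology class. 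Combining $\age(g)+\age(g^{-1})=N-N_g$ with the $q$-shift common to both definitions, one checks that under Krawitz's monomial$\leftrightarrow$group-element swap the $A$-total degree on $\Hcal_{W,G}$ matches the $B$-total degree on $\Qcal_{W^{\vee},G^{\vee}}$, while the \emph{first} bidegree components get exchanged according to $\deg_A^+\leftrightarrow \widehat c_W-\deg_B^-$, i.e.\ $p\mapsto N-2-p$ on the $(p,q)$-grading once everything is re-centered by the central charge $\widehat c_W=N-2$. This is precisely the Hodge-diamond flip recorded in \eqref{eq:classMS}.

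Assembling the chain
$$H^{p,q}_{\CR}\!\left([X_W/\wt G]\right)\cong \Hcal_{W,G}^{p,q}\cong \Qcal_{W^{\vee},G^{\vee}}^{p,q}\cong \Hcal_{W^{\vee},G^{\vee}}^{N-2-p,q}\cong H^{N-2-p,q}_{\CR}\!\left([X_{W^{\vee}}/\wt G^{\vee}]\right)$$
gives the asserted isomorphism, where the middle step uses the grading bookkeeping above and the fact that $\Hcal$ and $\Qcal$ of a given pair agree as ungraded spaces (Remark~\ref{rem:Gspace}, via \cite{OS} and \cite{Wa1}), so that $\Qcal_{W^{\vee},G^{\vee}}^{p,q}$ is carried to $\Hcal_{W^{\vee},G^{\vee}}^{N-2-p,q}$ by Krawitz's map applied in the reverse direction.

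The main obstacle I anticipate is not any single deep input---Theorems~\ref{thm:LGCYstates} and~\ref{thm:Kra} do the heavy lifting---but rather the careful reconciliation of the \emph{three} distinct bigradings in play: the $A$-model bidegree on $\Hcal_{W,G}$, the $B$-model bidegree on $\Qcal_{W^{\vee},G^{\vee}}$, and the Chen--Ruan bidegree on the orbifold $[X_W/\wt G]$ (which itself differs from the naive one by the $\langle j_W\rangle$-quotient, as discussed around \eqref{eq:mirror}). One must check that the degree shift by $q=\sum_j q_j$ in Definition~\ref{defn:Astates}, the age shifts, and the passage from the affine state space to the projective hypersurface cohomology all conspire so that the composite isomorphism sends $(p,q)$ to $(N-2-p,q)$ rather than, say, $(p,N-2-q)$ or some mixture---in other words, that it is the \emph{complex-structure}$\leftrightarrow$\emph{Kähler} mirror flip and not merely a formal symmetry of Hodge diamonds. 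Once the dictionary $\age(g)+\age(g^{-1})=N-N_g$ is used consistently on both sides, this is bookkeeping, but it is the step where an error would hide.
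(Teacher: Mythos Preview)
Your proposal is correct and matches the paper's approach: the corollary is presented there simply as ``a direct consequence of the cohomological LG-CY correspondence and Krawitz's mirror symmetry theorem,'' i.e.\ Theorem~\ref{thm:LGCYstates} applied to both $(W,G)$ and $(W^\vee,G^\vee)$ chained through Theorem~\ref{thm:Kra}, which is exactly your argument. One minor terminological slip: the middle step $\Qcal_{W^\vee,G^\vee}\to\Hcal_{W^\vee,G^\vee}$ is the same-pair comparison of the $A$- and $B$-bigradings (via Remark~\ref{rem:Gspace} and the age identity you already invoke), not ``Krawitz's map applied in the reverse direction''---Krawitz always relates a pair to its \emph{transpose}---so you have the right ingredient, just mislabeled it.
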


\begin{cor}\label{cor:coarsecohom}
Assume that the quotient schemes $X_W/\wt G$ and
$X_{W^{\vee}}/\wt{G^\vee}$ admit crepant resolutions $Z$
and $Z^{\vee}$. Then the above statement yields a statement in
ordinary cohomology:
$$h^{p,q}(Z;\CC)=h^{N-2-p,q}(Z^{\vee};\CC).$$
\end{cor}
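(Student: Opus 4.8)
The plan is to deduce the statement from Corollary~\ref{cor:MS} by feeding both Chen--Ruan cohomology groups through the cohomological McKay correspondence for crepant resolutions of Gorenstein quotient singularities. First I would record the geometric input: $X_W$ is smooth of dimension $N-2$, and $\wt G=G/\langle j_W\rangle$ acts on it through a subgroup of $SL$, so the quotient scheme $X_W/\wt G$ has only canonical Gorenstein quotient singularities (\'etale-locally of the form $\CC^{N-2}/\Gamma$ with $\Gamma$ a finite subgroup of $SL(\CC^{N-2})$); the same holds for $X_{W^\vee}/\wt{G^\vee}$, and by the reduction around \eqref{eq:mirror} the non-effective action of $G$ has been replaced by the faithful action of $\wt G$, so that $X_W/\wt G=X_W/G$. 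In this situation the smooth Deligne--Mumford stack $[X_W/\wt G]$ is a crepant (stacky) resolution of its coarse space, and its Chen--Ruan cohomology $H^{p,q}_{\CR}([X_W/\wt G];\CC)$ --- with the age-shifted bigrading used in the excerpt --- computes the stringy Hodge numbers of $X_W/\wt G$. This is the cohomological McKay correspondence (Batyrev; Denef--Loeser via motivic integration; Yasuda; for the abelian quotients occurring here one may also argue directly from the toric description); it is precisely the ``cohomological equivalence'' alluded to after \eqref{eq:mirror}.

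Next I would combine this with the crepant-invariance of stringy invariants. A crepant resolution $f\colon Z\to X_W/\wt G$ --- which, being a resolution of a projective variety, we may take smooth and projective --- satisfies $E_{\mathrm{st}}(X_W/\wt G)=E(Z)$, and the cohomological McKay correspondence upgrades this to a bigraded isomorphism $H^{\bullet,\bullet}(Z;\CC)\cong H^{\bullet,\bullet}_{\CR}([X_W/\wt G];\CC)$, hence
\[
h^{p,q}(Z)=h^{p,q}_{\CR}\big([X_W/\wt G];\CC\big)\qquad\text{for all }p,q,
\]
and likewise $h^{p,q}(Z^\vee)=h^{p,q}_{\CR}([X_{W^\vee}/\wt{G^\vee}];\CC)$. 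Now Corollary~\ref{cor:MS}, itself a consequence of Theorem~\ref{thm:LGCYstates} and Krawitz's isomorphism (Theorem~\ref{thm:Kra}), gives
\[
h^{p,q}_{\CR}\big([X_W/\wt G];\CC\big)=h^{\,N-2-p,\,q}_{\CR}\big([X_{W^\vee}/\wt{G^\vee}];\CC\big),
\]
and substituting the two displays above into this identity yields $h^{p,q}(Z)=h^{N-2-p,q}(Z^\vee)$, as claimed.

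The main obstacle is not the formal combination above but the input quoted as a black box: that the Chen--Ruan cohomology of $[X_W/\wt G]$, with its \emph{bigraded} (not merely total-degree) structure, agrees with the Hodge bigrading of a crepant resolution. At the level of stringy $E$-functions this is the clean consequence of motivic integration, but propagating the equality to each pair $(p,q)$ uses that $Z$ is smooth projective (so its Hodge--Deligne polynomial is $\textstyle\sum(-1)^{p+q}h^{p,q}u^pv^q$, with a pure Hodge structure in each degree) together with the fact that $h^{p,q}_{\CR}$ in the excerpt is genuinely defined pair-by-pair through the age shift; one must check that this age-shift convention matches the stringy/discrepancy shift, which it does precisely because $\wt G\subseteq SL$ forces all ages to be integers equal to the relevant discrepancies. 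If one wished to avoid this subtlety entirely, an alternative is to cite a form of the cohomological McKay correspondence already stated directly as a bigraded vector space isomorphism for abelian quotients, so that Corollary~\ref{cor:coarsecohom} reduces to a one-line substitution.
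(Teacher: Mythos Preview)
Your proposal is correct and matches the paper's intended approach: the corollary is stated without proof as an immediate consequence of Corollary~\ref{cor:MS}, and the passage from Chen--Ruan Hodge numbers of $[X_W/\wt G]$ to ordinary Hodge numbers of a crepant resolution $Z$ is exactly the cohomological McKay correspondence you invoke (cf.\ the parenthetical remark after~\eqref{eq:mirror}). Your careful discussion of why the bigraded, not merely total-degree, equality holds is a welcome clarification that the paper leaves implicit.
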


In the case where $w_j$ divides $d$,
Corollary \ref{cor:MS} can be deduced from Borisov and Batyrev's
construction of mirror pairs in toric geometry \cite{BB}. As already mentioned,
the general
case does not fit into polar duality because
the associated toric variety is not reflexive.
The following example illustrates this well.

\begin{exa}
We consider the quintic
hypersurface in $\PP^4$ defined as the vanishing locus of
$$W=x_1^4x_2+x_2^4x_3+x_3^4x_4+x_4^4x_5+x_5^5.$$
This is a chain-type Calabi--Yau variety $X$ whose Hodge diamond is clearly equal to
that of the Fermat quintic and is well known: $h^{1,1}=1$, $h^{0,3}=1$,
$h^{1,2}=101$.
The mirror Calabi--Yau
is given by
the vanishing of the polynomial
$$W^{\vee}(x_1,x_2,x_3,x_4,x_5)=x_1^4+x_1x_2^4+x_2x_3^4+x_3x_4^4+x_4x_5^5,$$
which may be regarded as defining a degree-$256$
hypersurface $X^{\vee}$ inside
$\PP(64,48,52,51,41)$. This is a degree-$256$ hypersurface of
Calabi--Yau type
($256$ is indeed the sum of the weights).
In this case, the ambient weighted projective stack is no longer Gorenstein
(all weights but $64$ do not divide the total weight $256$).
Note that the group $SL$ coincides with $\langle
j\rangle$ on both sides; therefore, Corollary \ref{cor:MS} reads
$$h^{p,q}_{\CR}(X;\CC)=h^{3-p,q}_{\CR}(X^{\vee};\CC).$$
Indeed, the Hodge diamond of $X_{W^{\vee}}$ satisfies
$h^{1,1}=101$, $h^{0,3}=1$,
$h^{1,2}=1$ matching \eqref{eq:classMS}.
\end{exa}

   Let us explain the role of the Gorenstein condition.
Let us call the hypersurface $X_W\subset \PP(w_1, \dots, w_N)$
{\em transverse} if the intersection of $X_W$ with every
coordinate subspace of the form $\PP(w_{i_1},
\dots, w_{i_k})$
   is either
   empty or  a hypersurface. The transversality of $X_W$ amounts
   essentially to the ambient space being Gorenstein.
   In another words, if $\PP(w_1, \dots, w_N)$ is not
Gorenstein, $X_W$ will contain some coordinate subspace.
The presence of these coordinate subspaces makes it more difficult
to study $X_W$ and its quotients.
For instance,
it is well known that the enumerative geometry of rational stable maps
for  these coordinate subspaces is an open problem in Gromov--Witten theory
(this is due to the behavior of the virtual fundamental cycle).
   Initially, we thought that nonGorenstein cases such as
loop and chain polynomials may provide counterexamples
   for the classical mirror symmetry conjecture.
   We actually found out that the cohomological LG-CY correspondence
   as well as the classical mirror symmetry conjecture hold
   in full generality.
   Similar issues arise in the enumerative geometry of curves; we will
   discuss them in \S\ref{subsect:CIR}.

\subsubsection{The proof of the cohomological LG-CY correspondence}
To illustrate the idea of the proof, it is instructive
to work out the case of the quintic three-fold.

\begin{exa}\label{exa:isosspace}
    Consider $W=x^5_1+x^5_2+x^5_3+x^5_4+x^5_5$ and the cyclic group
    $G=\langle j{}\rangle$ of order 5.
    For each element $j^{m}=(e^{2\pi\cxi m/5},\dots,e^{2\pi\cxi m/5})\in G$
    with $m=0,\dots,4$
    we compute
    $\Hcal_{W,G}=\bigoplus_{g\in G} H^{N_g}(\CC^{N}_g,W^{+\infty}_g;\CC)^G$
    and the total degree of its elements.

    Let $m\ne 0$ and consider the elements of the summands corresponding to $j^m$.
    These are the narrow states where $H^{N_g}(\CC^{N}_g,W^{+\infty}_g;\CC)^G$ is isomorphic to
    ${\pmb 1}_g \CC$.
    The total degree of $\pmb 1$ is
    $2m-2$.
    We obtain four elements
    of degree $0,2,4$ and $6$; they
    correspond to
    the generators of $H^0(X_W,\CC)$,
    $H^2(X_W,\CC)$, $H^4(X_W,\CC)$ and $H^6(X_W,\CC)$.

    Finally consider the remaining states which are not narrow and lie in
    $H^N(\CC^N_1, W^{+\infty}_1,\CC)^G$. This space is isomorphic
    to the degree-$3$ cohomology group of $X_W$.
    This holds
    in full generality as a consequence of the isomorphism between the
    $G$-invariant part of the local algebra
    and the primitive cohomology.
    The total degree of these elements is $3$.
Therefore, we recover the desired degree-preserving vector space isomorphism.
\end{exa}

    We learn from this example that the dichotomy determined by narrow and broad states
    within the Landau--Ginzburg state space corresponds to the
    well known dichotomy on
    the Calabi--Yau side between fixed classes and variable (or primitive) classes.
    In the orbifold setting, each sector of $X_W$ lies in some
    subweighted projective coordinate space of the form $\PP(w_{i_1},\dots,w_{i_k})$.
    Therefore, this dichotomy
    applies to each sector. We say that an orbifold cohomological
    class is variable (or primitive)
    if it comes from a  variable (or primitive) cohomology class 
    of some sector.
    It is straightforward to match
    the broad  sector with variable classes.
    But it is far trickier to do so for narrow group elements versus
    fixed classes.
    We match these classes via a combinatorial
    construction based on an earlier model for
    Chen--Ruan orbifold cohomology of weighted
    projective spaces due to Boissi\`ere,
    Mann, and Perroni \cite{BMP}.

\section{LG-CY correspondence: towards global mirror symmetry}
\label{sect:genuszeroLGCY}

We state the LG-CY correspondence conjecture at the quantum level; then, we cast it within a mirror symmetry framework.
In the last part of this section we review recent results.

\subsection{LG-CY correspondence between GW theory and FJRW theory}
\label{subsect:LGCY}
The state space isomorphism stated above for
any nondegenerate polynomial $W$ and
any $G\ni j_W$ allows us to extend
the conjecture which we have stated in \cite{ChRu} only
for the quintic polynomial $W$ and
$G=\langle j_W\rangle$. Let us set up Givental's formalism.

\subsubsection{Givental's formalism for GW and FJRW theory}
The setup presented here extends the analogue setup for the quintic presented in \cite{ChRu}.
The genus-zero invariants of both theories are encoded
by two Lagrangian cones, $\Lcal_{\GW}$ and $\Lcal_{\RW}$,
inside two symplectic vector spaces, $(\Vcal_{\GW}, \Omega_{\GW})$ and
$(\Vcal_{\RW},\Omega_{\RW})$.
The two symplectic vector spaces also allow us to state the conjectural
correspondence  in higher genera.
We recall the two settings simultaneously by using the
subscript $\W$, which can be read as ${\GW}$ or ${\RW}$.

We define the vector space $\Vcal_{\W}$ and
its symplectic
form $\Omega_{\W}$.
The elements of $\Vcal_{\W}$
are Laurent series with values
in a state space $H_{\W}$;
$$\Vcal_{\W}=H_{\W}\otimes \CC((z^{-1})).$$
In $\RW$ theory the state space is normally the entire
space $\Hcal_{W,G}$. In $\GW$ theory the state space is
$H_{\CR}([X_W/\wt G];\CC)$.
We choose a basis
$\phi_0,\dots,\phi_k$
for the state space of $\RW$ theory
and a basis $\fie_0,\dots,\fie_k$ for
the state space of $\GW$ theory. We label by zero
${\pmb 1}_{j_W}$ and ${\pmb 1}_{X_W}$, respectively (these two classes play a 
special role
at \eqref{eq:dilatonshift}).
We express the  basis of $H_{\W}$ as
$\Phi_0,\dots, \Phi_k$ and the dual basis $\Phi^0,\dots, \Phi^k$.

The vector space $\Vcal_{\W}$ is equipped with the symplectic form
$$\Omega_{\W}(f_1,f_2)=\Res _{z=0} \langle f_1(-z),f_2(z)\rangle_{\W},$$
where $\langle \ , \ \rangle_{\W}$ is the inner pairing discussed above.
In this way $\Vcal_{\W}$ is polarized as
$\Vcal_{\W}=\Vcal^+_{\W}\oplus \Vcal^-_{\W}$, with
$\Vcal^+_{\W}=H_{\W}\otimes \CC[z]$ and
$\Vcal^-_{\W}=z^{-1}H_{\W}\otimes \CC[[z^{-1}]]$,
and can be regarded as the total cotangent space of
$\Vcal^+_{\W}$. The points of $\Vcal_{\W}$ are parametrized by
Darboux coordinates
$\{q^\iii_{a}, p_{l,j}\}$ and can be written as
$$\sum_{a\ge 0} \sum _{\iii=0}^{k} q^\iii_{a} \Phi_\iii z^a+
\sum _{l\ge 0} \sum_{j=0}^k p_{l,j} \Phi^j (-z)^{-1-l}.$$

We review the definitions of the
potentials
encoding the invariants of the two theories.
In FJRW theory, the invariants are the intersection numbers
\begin{equation}\label{eq:FJRWinv}
\langle \tau_{a_1}(\phi_{i_1}),\dots,\tau_{a_n}(\phi_{i_n})
\rangle^{\RW}_{g,n}= \int_{\MMM_{g,n}} \prod_{i=1}^n \psi_i^{a_i}
\cap \Lambda_{g,n,G}^W(\phi_{i_1},\dots,\phi_{i_n}),
\end{equation}
with $\Lambda_{g,n,G}^W$ as in \S\ref{subsect:CohFT}.
In GW theory, the invariants are the intersection numbers
\begin{equation}\label{eq:GWinv}
\langle \tau_{a_1}(\fie_{\iii_1}),\dots,\tau_{a_n}(\fie_{\iii_n})
\rangle^{\GW}_{g,n,\delta}= \prod_{i=1}^n \ev^*_i
(\fie_{\iii_i})\psi_i^{a_j} \cap [X_W]_{g,n,\delta}^{\vir}. \end{equation} The
generating functions are respectively
\begin{equation}\label{eq:FJRWpot}
\Fcal_{\RW}^g=
\sum_{\substack{a_1,\dots, a_n\\ \iii_1, \dots, \iii_n}}
\langle \tau_{a_1}(\phi_{\iii_1}),\dots,\tau_{a_n}(\phi_{\iii_n})
\rangle^{\RW}_{g,n}\frac{t_{a_1}^{\iii_1}\dots t_{a_n}^{\iii_n}}{n!}
\end{equation}and
\begin{equation}\label{eq:GWpot}
\Fcal_{\GW}^g=\sum_{\substack{a_1,\dots, a_n\\ \iii_1, \dots, \iii_n}}\sum _{\delta\ge 0}
\langle \tau_{a_1}(\fie_{\iii_1}),\dots,\tau_{a_n}(\fie_{\iii_n})
\rangle^{\GW}_{g,n,\delta}\frac{t_{a_1}^{\iii_1}\dots
t_{a_n}^{\iii_n}}{n!}.\end{equation}

In this way, both theories yield a power series
$$\Fcal_{\W}^g=\sum_{\substack{a_1,\dots, a_n\\ \iii_1, \dots, \iii_n}}\sum _{\delta\ge 0}
\langle \tau_{a_1}(\Phi_{\iii_1}),\dots,\tau_{a_n}(\Phi_{\iii_n})
\rangle^{\W}_{g,n,\delta}\frac{t_{a_1}^{\iii_1}\dots t_{a_n}^{\iii_n}}{n!}$$
in the variables $t_{a}^i$ (for FJRW theory the contribution of
the terms $\delta >0$ is set to zero,
whereas $\langle\ \ \rangle^{\W}_{g,n,0}$
should be read as $\langle\ \ \rangle_{g,n}^{\RW}$).

We can also define the partition function
\begin{equation}\label{eq:totpot}
\Dcal_{\W}=\exp\left(\textsum_{g\ge 0} \hbar^{g-1} \Fcal^g_{\W}\right).\end{equation}

Let us focus on the genus-zero potential $\Fcal_{\W}^0$.
The dilaton shift
\begin{equation}\label{eq:dilatonshift}
 q^{\iii}_a=\begin{cases}
t^0_1-1&\text{if $(a,{\iii})=(1,0)$}
\\t^{\iii}_a &\text{otherwise.}\end{cases}
\end{equation}
makes $\Fcal^0_\W$ into a power series in the Darboux coordinates
$q^{\iii}_a$. Now we can define $\Lcal_\W$ as the cone
$$\Lcal_\W:=\{\pmb p=d_{\pmb q}\Fcal^0_\W\}\subset\Vcal_\W.$$
With respect to the symplectic form $\Omega_\W$,
the subvariety $\Lcal_\W$ is
a Lagrangian cone
whose tangent spaces
satisfy the geometric condition $zT=\Lcal_\W\cap T$
at any point
(this happens because both potentials
satisfy the equations SE, DE and TRR of \cite{Givental};
in FJRW theory, this is guaranteed by \cite[Thm.~4.2.8]{FJR1}).

Every point of $\Lcal_{\W}$
can be written as follows
$$-z\Phi_0+ \sum _{\substack{0\le \iii\le k\\a\ge 0}}t^{\iii}_a \Phi_\iii z^a+
\sum_{\substack{n\ge 0\\ \delta\ge 0 }}
\ \sum_{\substack{0\le \iii_1,\dots, \iii_n\le k \\ a_1,\dots,a_n\ge 0}}
\ \sum_{\substack{0\le \epsilon\le k\\ l\ge 0 }}
\frac{t_{a_1}^{\iii_1}\dots t_{a_n}^{\iii_n}}{n!(-z)^{l+1}}
\langle \tau_{a_1}(\Phi_{\iii_1}),
\dots,\tau_{a_n}(\Phi_{\iii_n}),\tau_l(\Phi_\epsilon)
\rangle^{\W}_{0,n+1,\delta}\Phi^\epsilon,$$
where the term $-z\Phi_0$ performs the dilaton shift.
\begin{rem}[$J$-function]\label{rem:Jfunction}
Setting $a$ and $a_i$ to zero, we obtain the points of the form
\begin{equation}\label{eq:Jlocus}
-z\Phi_0+ \sum _{0\le \iii\le k}t_0^\iii \Phi_\iii +
\sum_{\substack{n\ge 0\\\delta\ge 0 }} \ \sum_{0\le \iii_1,\dots,
\iii_n\le k} \ \sum_{\substack{0\le \epsilon\le k\\ l\ge 0
}}\frac{t_{0}^{\iii_1}\dots t_{0}^{\iii_n}}{n!(-z)^{k+1}} \langle
\tau_{0}(\Phi_{\iii_1}),\dots,\tau_{0}(\Phi_{\iii_n}),
\tau_l(\Phi_\epsilon)
\rangle^{\W}_{0,n+1,\delta}\Phi^\epsilon,
\end{equation}
which uniquely determine
the rest of $\Lcal_{\W}$ (via
multiplication by $\exp(\al /z)$ for any
$\al \in \CC$---\emph{i.e.} via
the string equation---and via the divisor equation in GW theory).
We define the $J$-function
$$t=\sum_{\iii=0}^k t_0^\iii \Phi_\iii
\mapsto J_{\W}(t,z)$$ from the state space
$H_{\W}$
to the symplectic vector space
$\Vcal_{\W}$ so that
$J_{\W}(t,-z)$ equals the expression \eqref{eq:Jlocus}.
\end{rem}

\subsubsection{The conjecture}
The following conjecture can be regarded as a geometric version of
the physical LG-CY correspondence
\cite{VW89} \cite{Wi93b}. A
    mathematical conjecture was proposed by the second author in
    \cite{R}.
    The formalism is
    analogous to the
    conjecture of \cite{CIT, CR} on crepant resolutions of
    orbifolds and uses Givental's quantization from \cite{Givental},
    which is naturally defined
    in the above symplectic spaces $\Vcal_{\RW}$ and $\Vcal_{\GW}$.
    In \cite{ChRu} we provided a precise mathematical statement
    for the special case of the quintic three-fold; here, we build upon recent work,
    and provide a general statement applying to all CY orbifolds
    that can be written as hypersurfaces $X_W$ in weighted projective spaces and
    to the finite group quotients $[X_W/\wt G]$.

\begin{conj}[LG-CY correspondence]\label{conj:LGCY}
    Consider the Lagrangian cones $\Lcal_{\RW}$ and $\Lcal_{\GW}$.
    \begin{enumerate}
    \item[(1)] There is a degree-preserving $\CC[z, z^{-1}]$-valued linear symplectic isomorphism
       $${\mathbb U}_{\text{\rm LG-CY}}: \Vcal_{\RW}\rightarrow \Vcal_{\GW}$$ and a choice of analytic
       continuation of $\Lcal_{\RW}$ and $\Lcal_{\GW}$
    such that ${\mathbb U}_{\text{\rm LG-CY}} (\Lcal_{\RW})=\Lcal_{\GW}.$
    \item[(2)] Up to an overall constant and up to a choice of
    analytic continuation, the total potential functions  are related by
    quantization of ${\mathbb U}_{\text{\rm LG-CY}} $; \emph{i.e.}
    $$\Dcal_{\GW}=\widehat{{\mathbb U}}_{\text{\rm LG-CY}}(\Dcal_{\RW}).$$
    \end{enumerate}
\end{conj}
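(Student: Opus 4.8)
The plan is to prove part (1) by reducing the equality of Lagrangian cones to an identity of explicit hypergeometric series related by analytic continuation, and then to bootstrap part (2) through Givental's quantization formalism together with a reconstruction theorem on the semisimple locus of the two Frobenius manifolds. For (1) I would first reduce to a statement about small $J$-functions: both cones $\Lcal_{\W}$ are ruled by the condition $zT=\Lcal_{\W}\cap T$ and are generated, via the string and (on the GW side) divisor equations, by the one-variable restriction of the $J$-function of Remark \ref{rem:Jfunction}, so it suffices to exhibit a point on each cone and an analytic continuation identifying them under a degree-preserving symplectic map. On the GW side, the quantum Lefschetz principle (Givental \cite{Givental}, Lian--Liu--Yau \cite{LLY}, and its orbifold refinements) applied to $X_W\subset\PP(w_1,\dots,w_N)$ and to the quotients $[X_W/\wt G]$ produces an explicit hypergeometric $I_{\GW}$-function on $\Lcal_{\GW}$; the non-Gorenstein ambients require extra care because of the coordinate strata discussed after Corollary \ref{cor:coarsecohom}, but the CY condition keeps the relevant Birkhoff factorization under control. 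On the FJRW side, when $W$ is of Fermat type paired with $\langle j_W\rangle$ the genus-zero virtual cycle is concave, equal to $\ctop((R^1\pi_*E)^\vee)$, and Proposition \ref{pro:GRR} computes the genus-zero descendants explicitly, giving an $I_{\RW}$-function on $\Lcal_{\RW}$ exactly as in \cite{ChRu}; for general nondegenerate $W$ and $G\ni j_W$ this must be upgraded by reducing broad and non-concave contributions using the cohomological field theory axioms of Theorem \ref{thm:CohFT}, deformation invariance, and---where $W$ is invertible---Krawitz's identification of the Frobenius algebra \cite{Kr}.

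Next I would recognize $I_{\GW}$ and $I_{\RW}$ as two bases of solutions of one Picard--Fuchs system and perform the continuation. Under the mirror isomorphisms \eqref{eq:LCLP_MS} and \eqref{eq:GepnerMS}, the components of these functions near $t=\infty$ and near $t=0$ are flat sections of the Gauss--Manin connection on $H^3(X^{\vee}_{W,t},\CC)$ over $(\PP^1)^\times$, hence solve the same order-four GKZ/Picard--Fuchs equation in $t$. Analytic continuation along a path in $(\PP^1)^\times$ from $0$ to $\infty$ yields a linear isomorphism of the two four-dimensional solution spaces; tensoring with $\CC[z,z^{-1}]$ and using the homogeneity of the equation makes it degree preserving, and this is $\mathbb U_{\text{\rm LG-CY}}$. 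Symplecticity is then deduced either from the fact that the Gauss--Manin connection preserves the topological intersection form on $H^3$, or---the cleaner route of \S\ref{subsect:CIR}---from Orlov's equivalence between $D^b(X_W)$ and the matrix factorizations of $(W,\langle j_W\rangle)$, whose induced map on periodic cyclic homology realizes $\mathbb U_{\text{\rm LG-CY}}$ and is automatically symplectic for a Calabi--Yau category \cite{CIR}. Matching the distinguished generators ${\pmb 1}_{j_W}$ and ${\pmb 1}_{X_W}$ then gives $\mathbb U_{\text{\rm LG-CY}}(\Lcal_{\RW})=\Lcal_{\GW}$.

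For part (2) the plan is to pass to the big phase space. By Givental's theory $\Dcal_{\W}$ is the quantization of data on $\Lcal_{\W}$ once one knows the cone together with the $R$-matrix at a semisimple point of the associated Frobenius manifold; the CY point is not semisimple, so I would (i) extend both theories over their full Frobenius manifolds---again requiring control of the FJRW broad sector---(ii) extend $\mathbb U_{\text{\rm LG-CY}}$ to a symplectic transformation intertwining the two Frobenius structures along the family over $(\PP^1)^\times$, (iii) at a generic, hence semisimple, point invoke the Givental--Teleman reconstruction to write each $\Dcal_{\W}$ as $\widehat R_{\W}\,\widehat{\Delta_{\W}}$ applied to a product of Witten--Kontsevich tau-functions, and (iv) check that $\mathbb U_{\text{\rm LG-CY}}$ carries one canonical frame, $R$-matrix and dilaton shift to the other, so that $\Dcal_{\GW}=\widehat{\mathbb U}_{\text{\rm LG-CY}}(\Dcal_{\RW})$ up to the global constant; one then specializes back to the small loci along the continuation of part (1).

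The main obstacle is twofold. First, on the FJRW side every step beyond the Fermat/concave case forces genuine computation with the broad sector, for which---as emphasized in Section \ref{sect:FJRW}---no effective method to solve Witten's equation or its algebraic surrogate is currently known, so that neither the general $I_{\RW}$-function nor the big Frobenius structure is accessible in closed form. Second, part (2) depends on leaving the non-semisimple CY locus: one must either propagate the higher-genus potentials by analytic continuation over all of $(\PP^1)^\times$---essentially the content of the holomorphic anomaly equation \cite{BCOV}, only conjectural at this level of generality---or prove generic semisimplicity of the big quantum products on both sides and that $\mathbb U_{\text{\rm LG-CY}}$ respects the canonical coordinates. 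Either way this requires input well beyond the cohomological results of Section \ref{sect:classMS}.
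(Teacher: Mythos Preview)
The statement you are addressing is labeled \emph{Conjecture} in the paper, not a theorem: the paper does not contain a proof of it, and indeed the full statement remains open. So there is no ``paper's own proof'' to compare against. What the paper does provide is (i) a proof of part (1) in genus zero for the quintic three-fold (Theorem~\ref{thm:quinticLGCY}, from \cite{ChRu}), (ii) an extension to Fermat hypersurfaces in Gorenstein weighted projective spaces with $G=\langle j_W\rangle$ (via \cite{CIR}), and (iii) a complete verification of both parts for the elliptic orbifold $\PP^1$ cases via Saito--Givental theory and quasimodularity (Corollaries~\ref{cor:CYorbP1} and~\ref{cor:modularity}). Your proposal is therefore not a proof but a strategy outline, and you acknowledge this in your final two paragraphs.

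That said, your strategy is well aligned with the paper's own framework. The reduction of part (1) to matching $I$-functions via analytic continuation of Picard--Fuchs solutions is exactly Remark~\ref{rem:strategy}; the two routes to symplecticity (Gauss--Manin intersection form versus Orlov's equivalence) are precisely the two the paper offers, with \S\ref{subsect:CIR} favoring the latter. Your approach to part (2) via Givental--Teleman reconstruction at a semisimple point is the mechanism behind the elliptic $\PP^1$ results in \S\ref{subsect:CYP1}, though the paper reaches semisimplicity there through the $B$-model (deformations of simple elliptic singularities to Morse points) rather than through big quantum cohomology on the $A$-side. The obstacles you name---the broad sector beyond Fermat/concave cases, and the non-semisimplicity of the CY point forcing either analytic continuation of higher-genus potentials or a proof of generic semisimplicity---are exactly the ones the paper flags as open (see the discussion after Conjecture~\ref{conj:restLGCY} and Problem~\ref{prob:orbifolding}). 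In short: your plan is a faithful summary of the program, correctly scoped, and correctly identifies why it does not yet constitute a proof.
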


    \begin{rem}
    For the readers familiar with the crepant resolution
    conjecture \cite{CIT, CR}, an important difference here is the lack
    of monodromy condition.\end{rem}

    By \cite{CR}, a direct consequence of the first part of the
    above conjecture is the following
    isomorphism between quantum rings.

\begin{cor}\label{cor:quantumiso}
For an explicit specialization of the variable $q$ determined by
${\mathbb U}_{\text{\rm LG-CY}}$, the quantum ring of $X_W$ is isomorphic to the quantum ring
of the singularity $\{W=0\}$.
\end{cor}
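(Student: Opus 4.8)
The plan is to extract the ring isomorphism from the geometry of the Lagrangian cones alone, following Coates and Ruan \cite{CR}. The starting point is Givental's observation \cite{Givental} that the genus-zero Frobenius structure of $\W$ theory --- in particular the quantum product at every point of the (formal) phase space --- is encoded by the cone $\Lcal_\W\subset \Vcal_\W$ together with its $\CC[z]$-module structure. Concretely, for a point $f=J_\W(\tau,-z)\in\Lcal_\W$ the tangent space $T_f\Lcal_\W$ is a free $\CC[z]$-module of rank $\dim H_\W$, it satisfies the characterizing relation $zT_f\Lcal_\W=\Lcal_\W\cap T_f\Lcal_\W$, the quotient $T_f\Lcal_\W/zT_f\Lcal_\W$ is canonically identified with $H_\W$, and the structure constants $c_{ij}^k(\tau)$ of the quantum product $\bullet_\tau$ are recovered from the way $T_f\Lcal_\W$ sits inside $\Vcal_\W$ as a $\CC[z]$-submodule (equivalently, from the quantum differential equation satisfied by $J_\W$). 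Thus two theories whose cones are identified by a $\CC[z,z^{-1}]$-linear symplectic isomorphism have, at matching points, isomorphic quantum rings.

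For the argument itself, I would proceed as follows. By Conjecture \ref{conj:LGCY}(1) there is a degree-preserving $\CC[z,z^{-1}]$-linear symplectic isomorphism $\mathbb U_{\text{\rm LG-CY}}\colon\Vcal_{\RW}\to\Vcal_{\GW}$, together with a choice of analytic continuation of $\Lcal_{\RW}$ and $\Lcal_{\GW}$ over $(\PP^1)^\times$, such that $\mathbb U_{\text{\rm LG-CY}}(\Lcal_{\RW})=\Lcal_{\GW}$. Since $\mathbb U_{\text{\rm LG-CY}}$ is $\CC[z,z^{-1}]$-linear it commutes with multiplication by $z$; being a linear isomorphism carrying $\Lcal_{\RW}$ onto $\Lcal_{\GW}$, it carries tangent spaces to tangent spaces, so $\mathbb U_{\text{\rm LG-CY}}(T_f\Lcal_{\RW})=T_{\mathbb U_{\text{\rm LG-CY}}(f)}\Lcal_{\GW}$, and it preserves the relation $zT=\Lcal\cap T$ on both sides. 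Passing to the quotient by $z$, $\mathbb U_{\text{\rm LG-CY}}$ induces at each $f$ a linear isomorphism $H_{\RW}\to H_{\GW}$ which --- being induced by a map of $\CC[z]$-modules respecting the cones --- intertwines the quantum products, i.e.\ is an isomorphism from the quantum ring of $(W,G)$ at the parameter $f$ onto the quantum ring of $[X_W/\wt G]$ at the parameter $\mathbb U_{\text{\rm LG-CY}}(f)$. Note that $\mathbb U_{\text{\rm LG-CY}}$ need not respect the polarizations $\Vcal_\W^\pm$; this is harmless, since the quantum product is recovered from the cone and the $z$-action alone.

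It then remains to specialize. Take $f$ to be the point $J_{\RW}(0,-z)$ corresponding to the origin of the FJRW phase space $\Acal(W,\ZZ_5)$ --- the Gepner point on the $B$ side of Figure \ref{fig:picture}. Its image $\mathbb U_{\text{\rm LG-CY}}(f)$, transported by the chosen analytic continuation into the large-complex-structure chart, is a definite point of the GW phase space of $X_W$; reading off its coordinate gives the explicit value of the quantum parameter $q$ asserted in the statement (this is exactly the mirror-map value computed in \cite{ChRu} in the quintic case). At this point the induced map of the previous paragraph is the desired isomorphism, and it is graded because $\mathbb U_{\text{\rm LG-CY}}$ is degree-preserving.

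I expect the main obstacle to be the content of the first paragraph: making precise, and invoking correctly from \cite{CR} and \cite{Givental}, the assertion that the \emph{quantum} product (not merely the vector-space identification of state spaces, nor the undeformed cup product) is a functor of the pair (cone, $\CC[z,z^{-1}]$-module structure), so that a $\CC[z,z^{-1}]$-linear cone isomorphism automatically yields a ring isomorphism rather than only a linear one. A secondary point requiring care is pinning down what ``the quantum ring of $\{W=0\}$'' means --- namely, the point of the FJRW Frobenius manifold at which one works (here the origin, where the three-point genus-zero function recovers the FJRW Frobenius algebra of \S\ref{subsect:CohFT}) --- and tracking the analytic continuation so that the resulting specialization of $q$ is genuinely well defined and explicit.
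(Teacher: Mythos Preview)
Your proposal is correct and follows the same approach as the paper: the paper gives no detailed argument at all, stating only that the corollary is ``by \cite{CR}, a direct consequence of the first part of the above conjecture,'' and you have simply unpacked the content of that citation. Your elaboration of how the cone together with its $\CC[z,z^{-1}]$-module structure determines the quantum product, and how a $\CC[z,z^{-1}]$-linear symplectic isomorphism of cones therefore yields a ring isomorphism at matching parameters, is precisely the Coates--Ruan mechanism being invoked.
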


\subsection{Towards global mirror symmetry}
\label{subsect:globalMS}

Here, we cast the above conjecture into a global mirror symmetry
framework. The idea is to extend the results presented in the introduction
(in particular Figure
\ref{fig:picture}).

There are two difficulties. First, Gromov--Witten theory
is largely unknown when entries are taken in the variable (\emph{i.e.}~primitive) cohomology
part of the state space. Only in some cases, such as the quintic threefold,
the whole theory is computable because the invariants associated
to the variable cohomology entries are easy to deal with (see \cite{ChRu}).
Second, much of the theory on the $B$ side has not yet been figured out.
For example, in Problem
\ref{prob:orbifolding} we pointed
out that not much is known beyond the untwisted sector; in nontechnical
terms, this means  that
only the case where $G$ is trivial can be
treated in a straightforward way. Here, we present a solution allowing us to move beyond this case.

\subsubsection{Mirror symmetry between invariant $A$-states and untwisted $B$-states}
As a first approach to both problems,
it is natural to try and single out a subclass of invariants involving only
certain state space entries.
This requires checking that
they really form independent theories (\emph{e.g.} 
checking that they assemble into a cohomological field theory
in the sense of \S\ref{subsect:CohFT}).
While doing so, we found out that the
two difficulties discussed above are mirror to
each other. Namely, we point out
that the isomorphism of Theorem \ref{cor:MS} matches
two naturally defined state subspaces. On the $A$ side we consider 
the untwisted sector; \emph{i.e.} the sector attached to the identity element. 
On the $B$ side we consider a subspace which contains the fixed cohomology: 
the space of
cohomology classes of $\Hcal_{W,G}$ which are left invariant by $\Aut(W)$.
Indeed, Krawitz's Theorem \ref{thm:Kra}
yields in particular the identification
\begin{equation}\label{eq:resttheories}
\left[\Hcal_{W,G}\right]^{\Aut(W)}\cong \left[\Qcal_{W^{\vee},G^{\vee}}\right]_{\rm untwisted}
\qquad(\text{the invariant/untwisted mirror symmetry}).
\end{equation}
The right hand side is the local algebra $\Qcal$ of $W^{\vee}$ invariant under
$G^{\vee}$
$$\left[\Qcal_{W^{\vee},G^{\vee}}\right]_{\rm untwisted}=(\Qcal_{W^{\vee}})^{G^{\vee}}.$$
The left hand side can be regarded via \eqref{eq:decompstates}
$$\left[\Hcal_{W,G}\right]^{\Aut(W)}=
\bigoplus_{g\in G} H^{N_g}(\CC^N_g,W^{+\infty}_g;\CC)^{\Aut(W)},$$
where at each summand we have taken invariants with respect of $\Aut(W)$ instead of $G$.
\begin{rem}
In the special case where $W$ is of Fermat type, imposing $\Aut(W)$-invariance is
the same as restricting to the narrow state space:
$$\left[\Hcal_{W_{\rm Fermat},G}\right]^{\Aut(W_{\rm Fermat})}=
[\Hcal_{W_{\rm Fermat},G}]^{\rm narrow}=
\bigoplus_{g\in G\ \mid\  \CC^N_g=(0)} {\pmb 1}_g \CC.$$
Under the LG-CY cohomological correspondence of Theorem \ref{thm:LGCYstates}
this is the same as eliminating all variable (primitive) classes.
In particular, in the case of the quintic Fermat three-fold
discussed in the introductory section, the relation \eqref{eq:resttheories}
shows that fixed cohomology of $X_W$ (in this case $H^{\rm ev}$)
mirrors the
variable cohomology of $X_W^{\vee}$ (in this case $H^{\rm odd}$).
In general, for these Fermat-type cases,
we have
complete genus-zero computations for Gromov--Witten theory (see \cite{CCLT})
as well as for Fan--Jarvis--Ruan--Witten theory (see \cite{CIR}
and
\S\ref{subsect:CIR}).
\end{rem}

\begin{rem} \label{rem:fixed_vs_Autinv}
More generally, if $W$ cannot be written as a Fermat polynomial,
the charges are not necessarily unitary fractions. In other words the
degree $d$ of the corresponding
hypersurface is not necessarily a multiple of all
weights $w_1,\dots,w_N$. Then,
it may happen that $[\Hcal_{W,G}]^{\Aut(W)}\supsetneq [\Hcal_{W,G}]^{\rm narrow}$.
A well known example is $D_4=x^3+xy^2$ where $ydx\wedge dy\mid e\rangle$ is
$\Aut(D_4)$-invariant
and clearly not narrow (see Rem.~\ref{rem:Kraproof} for the notation).
\end{rem}

We now prove that
the condition of ${\Aut(W)}$-invariance singles out a self-contained
cohomological field theory under the Calabi--Yau condition for $W$ and
$G$.

   \begin{lem}
   Assume $\sum_j q_j=1$ and $\langle j_W\rangle \subseteq G\subseteq \Aut(W)$.
   Consider the $\Aut(W)$-invariant subspace of
   either $\Hcal_{W,G}$ or $H_{\CR}([X_W/\wt{G}]; \CC)$. Suppose that $N\geq 5$.
   Then the virtual cycle $\Lambda^W_{g,n,G}(\alpha_1, \dots, \alpha_k)$, for
   $\Aut(W)$-invariant entries $\al_i$, forms a cohomological
   field theory.
   \end{lem}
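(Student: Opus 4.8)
The plan is to realize the claimed structure as the \emph{restriction} of the Fan--Jarvis--Ruan--Witten cohomological field theory of Theorem~\ref{thm:CohFT} to the $\Aut(W)$-invariant subspace, and then to verify that this restriction is again a cohomological field theory in the sense of Definition~\ref{defn:CohFT}. Under the hypotheses $\sum_j q_j=1$ and $N\ge 5$, Theorem~\ref{thm:LGCYstates} identifies $[\Hcal_{W,G}]^{\Aut(W)}$ with the corresponding subspace of $H_{\CR}([X_W/\wt G];\CC)$, so it suffices to argue on the Landau--Ginzburg side; the Gromov--Witten side is identical, since there $\Aut(W)/\langle j_W\rangle$ acts on the stack $[X_W/\wt G]$ by automorphisms and Gromov--Witten classes are invariant under automorphisms of the target. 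The first step is to record that $\Aut(W)$ acts on $\Hcal_{W,G}$ as a group of \emph{automorphisms of the cohomological field theory}: being diagonal symmetries, its elements preserve the Hodge bigrading of each summand in \eqref{eq:decompstates}, hence the grading $\deg_A$; they fix the distinguished generator ${\pmb 1}_{j_W}$ (the summand attached to $j_W$ is the one-dimensional space $H^0$ of the origin, with trivial $\Aut(W)$-action), which is the unit; they preserve the pairing $\langle\cdot,\cdot\rangle$ --- apply axiom C4b together with the $\Aut(W)$-invariance of $\Lambda^W_{0,3,G}$ and the fact that ${\pmb 1}_{j_W}$ is fixed; and, by the $\Aut(W)$-invariance property recorded after Theorem~\ref{thm:CohFT}, every $\Lambda^W_{g,n,G}$ is invariant under the diagonal $\Aut(W)$-action on its entries, the target $H^*(\MMM_{g,n})$ carrying the trivial action.

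Next I would set up the relevant linear algebra. Since $\Aut(W)$ is a finite abelian group acting by graded isometries, $\Hcal_{W,G}$ decomposes as the orthogonal direct sum of its isotypic components, and $\langle\cdot,\cdot\rangle$ pairs the $\chi$-component with the $\chi^{-1}$-component; in particular $[\Hcal_{W,G}]^{\Aut(W)}$ is orthogonal to the sum of the isotypic components for the nontrivial characters, and the restriction of $\langle\cdot,\cdot\rangle$ to $[\Hcal_{W,G}]^{\Aut(W)}$ is nondegenerate and symmetric. Together with the facts that the unit lies in the invariant subspace and that the subspace is graded, this provides exactly the data $\big([\Hcal_{W,G}]^{\Aut(W)},\langle\cdot,\cdot\rangle,1\big)$ required as input for Definition~\ref{defn:CohFT}. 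One then fixes once and for all a homogeneous basis of $\Hcal_{W,G}$ adapted to the isotypic decomposition, so that the matrix $(\eta_{\mu\nu})$ of the pairing is block diagonal with respect to invariant versus non-invariant basis vectors.

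It remains to check the axioms of Definition~\ref{defn:CohFT} for the maps $\Lambda^W_{g,n,G}$ restricted to $\big([\Hcal_{W,G}]^{\Aut(W)}\big)^{\otimes n}$. Axiom C1 is inherited immediately; axioms C4a and C4b are inherited because ${\pmb 1}_{j_W}$ is $\Aut(W)$-invariant, and on invariant inputs the right-hand side of C4b is by definition the restricted pairing. The only substantive point is the factorization axioms C2 and C3, where the gluing sum $\sum_{\mu,\nu}\Lambda^W_{g_1,n_1+1,G}(\dots,\phi_\mu)\,\eta^{\mu\nu}\,\Lambda^W_{g_2,n_2+1,G}(\phi_\nu,\dots)$ (and its loop analogue) runs a priori over a basis of all of $\Hcal_{W,G}$. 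Here $\Aut(W)$-invariance is decisive: with all external entries $\alpha_i$ invariant, the map $\phi\mapsto\Lambda^W_{g_1,n_1+1,G}(\alpha_{i_1},\dots,\alpha_{i_{n_1}},\phi)$ is an $\Aut(W)$-invariant linear functional on $\Hcal_{W,G}$, hence vanishes on every nontrivial isotypic component; so only basis vectors $\phi_\mu$ in $[\Hcal_{W,G}]^{\Aut(W)}$ contribute, and since $(\eta^{\mu\nu})$ is block diagonal the dual indices $\nu$ are forced into the invariant part as well. Thus the gluing sum reduces to a sum over a basis of $[\Hcal_{W,G}]^{\Aut(W)}$ contracted against the inverse of the \emph{restricted} pairing --- which is precisely the composition law of the candidate restricted theory; the same argument gives C3. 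This establishes the lemma.

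As for where the difficulty sits: essentially everything is formal once one knows (i) that $\Lambda^W_{g,n,G}$ is $\Aut(W)$-invariant in \emph{all} genera and for \emph{all} (including broad) state-space entries --- this is part of \cite{FJR1} and of the properties listed after Theorem~\ref{thm:CohFT} --- and (ii) that the $\Aut(W)$-action preserves the grading, so a homogeneous adapted basis exists. The one genuine thing to verify is therefore the stability of the propagator under the invariance constraint in axioms C2 and C3, carried out above; no intersection-theoretic input is needed beyond Theorem~\ref{thm:CohFT}.
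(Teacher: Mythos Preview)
Your treatment of the tree-type axiom C2 is fine and coincides with the paper's: with all external entries invariant, each gluing factor $\Lambda^W_{g_i,n_i+1,G}(\alpha_{\bullet},\phi)$ is a linear functional in its single extra slot $\phi$, and $\Aut(W)$-invariance forces that slot into the invariant subspace. The gap is your sentence ``the same argument gives C3.'' In the loop axiom the two internal slots sit in \emph{the same} factor:
\[
\rho_{\mathrm{loop}}^*\Lambda^W_{g,n,G}(\alpha_1,\dots,\alpha_n)\;=\;\sum_{\mu,\nu}\Lambda^W_{g-1,n+2,G}(\alpha_1,\dots,\alpha_n,\phi_\mu,\phi_\nu)\,\eta^{\mu\nu}.
\]
With all $\alpha_i$ invariant, $\Aut(W)$-invariance of $\Lambda$ only tells you that the bilinear form $B(\phi,\psi)=\Lambda^W_{g-1,n+2,G}(\alpha_1,\dots,\alpha_n,\phi,\psi)$ is invariant under the \emph{diagonal} $\Aut(W)$-action. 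An invariant bilinear form may very well pair the $\chi$-isotypic piece with the $\chi^{-1}$-piece for nontrivial $\chi$ --- exactly as $\eta$ does --- so the contraction $\sum_{\mu,\nu}B(\phi_\mu,\phi_\nu)\eta^{\mu\nu}$ can receive nonzero contributions from basis vectors lying in dual \emph{non}-invariant isotypic components. Nothing in your argument rules these out; hence the restricted right-hand side (summing only over an invariant basis) need not coincide with the full one. A telltale sign is that your proof never invokes the hypothesis $N\ge 5$.

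This is precisely why the paper does \emph{not} use the invariance argument for C3. Instead it observes that loop gluing occurs only for $g>0$, reduces via the forgetful morphism to entries different from the unit, checks that every $\Aut(W)$-invariant nonunit class has $\deg\ge 2$ (here one uses $N\ge 5$ so that $\dim X_W\ge 3$ and the twisted-sector bound coming from $G\subseteq SL_W$), and then performs a degree count
\[
\deg \Lambda^W_{g,n,G}(\alpha_1,\dots,\alpha_n)=(5-N)(g-1)+n-\tfrac12\sum_i\deg(\alpha_i)\le 0
\]
to conclude that both sides of C3 vanish. The hypothesis $N\ge 5$ is used exactly here; your purely representation-theoretic route does not see it and, as written, does not close the loop axiom.
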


   \begin{proof}
   It is enough to check the composition axioms.
   For tree-type gluing  morphisms intervening in Definition \ref{defn:CohFT},
   we encounter cycles of the form
   $$\Lambda^W_{g,n,G}(\alpha_1, \dots, \alpha_{k-1}, \beta)$$
   where $\alpha_i$ denotes an $\Aut(W)$-invariant class for $i=1, \dots, k-1$.
    We easily conclude that the condition  $\Lambda^W_{g,n,G}(\alpha_1, \dots, \alpha_{k-1},
    \beta)\neq 0$ holds  only if $\beta$ is  $\Aut(W)$-invariant.
    Next, a dimension argument allows us to
    verify also the axioms involving
loop-type gluing morphism in Definition \ref{defn:CohFT}.
The loop-type gluing situation only appears in the
    higher genus case: assume $g>0$. We can use the forgeful morphism and
reduce to the cases where no state space entry equals the fundamental class.
Then, each entry has
degree $\ge 2$. This happens because we have $N\geq 5$, hence the
hypersurface $X_W$ has dimension $\ge3$ and its lowest degree odd cohomology lies  above
degree $1$.
On the other hand
the degree of twisted sector entries is bigger than $1$ as a consequence of $G\subseteq SL_W$.
    Hence the state space has no degree $1$ class apart from the fundamental class.
    Therefore, we can assume that $\deg (\alpha_i)\geq 2$.
Then, a simple computation shows
    $$\deg \Lambda^W_{g,n,G}(\alpha_1, \dots, \alpha_k)=(5-N)(g-1)+n-\sum_{i=1}^n \frac{1}2 \deg (\alpha_i)\leq 0.$$
    Therefore, $\rho^*_{loop} \Lambda^W_{g,n,G}=0$.
The right hand side is zero for the same reason.
    \end{proof}

    It is reasonable to specialize Conjecture \ref{conj:LGCY} as follows.

    \begin{conj}\label{conj:restLGCY}
    The LG-CY Correspondence Conjecture \ref{conj:LGCY} holds
     for $\Aut(W)$-invariant theories on both sides.
    \end{conj}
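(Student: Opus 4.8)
The plan is to place Conjecture \ref{conj:restLGCY} inside the global mirror diagram of Figure \ref{fig:picture}, now for the pair $(W,G)$ and its Berglund--H\"ubsch--Krawitz dual $(W^\vee,G^\vee)$. The reason the $\Aut(W)$-invariant truncation is the right thing to attack first is the invariant/untwisted mirror symmetry \eqref{eq:resttheories}: Krawitz's isomorphism (Theorem \ref{thm:Kra}) identifies $[\Hcal_{W,G}]^{\Aut(W)}$ with the untwisted $B$-states $(\Qcal_{W^\vee})^{G^\vee}$, and on this sector the $B$-model is the honest variation of Hodge structure of a family of hypersurfaces. Concretely I would take the analogue of the Dwork family, \emph{i.e.}~the one-parameter family $X^\vee_{W^\vee,t}$ of complex deformations of the mirror orbifold $[X_{W^\vee}/\wt{G^\vee}]$ inside its weighted projective ambient, over $(\PP^1)^\times=\PP^1_t\setminus\{0,1,\infty\}$, and consider inside $H^{N-2}(X^\vee_{W^\vee,t},\CC)$ the sub-local-system spanned by the $G^\vee$-invariant (equivalently, primitive ambient) classes; this is the piece dual to $(\Qcal_{W^\vee})^{G^\vee}$. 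It carries the Gauss--Manin connection, and the two sides of Conjecture \ref{conj:restLGCY} will appear as the two geometric ends of this local system, exactly as in \cite{ChRu}.

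At the large complex structure point $t=\infty$, the first step is a mirror theorem identifying $\Lcal_{\GW}$ restricted to the $\Aut(W)$-invariant part of $H_{\CR}([X_W/\wt G];\CC)$ --- the ambient classes --- with the $B$-model data on $\Bcal(X^\vee_{W^\vee,\infty})$, realising \eqref{eq:LCLP_MS} in this generality. For Fermat and, more generally, Gorenstein ambient spaces this follows from the known weighted projective mirror theorems (Givental, Lian--Liu--Yau, and their orbifold extensions), which produce an explicit hypergeometric $I$-function whose entries are the periods of the invariant holomorphic form on the invariant cycles, together with the mirror map $q=q(t)$. The substantive new input is the non-Gorenstein case (loop- and chain-type $W$), where $X_W$ meets the coordinate strata and the ambient virtual cycle is delicate; here one would feed in the cohomological identification of Theorem \ref{thm:LGCYstates} and analyse directly the $I$-function attached to the (non-reflexive) toric data, as flagged in \S\ref{subsect:CIR}.

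At the Gepner point $t=0$ the second step is to match the analytic continuation of the same periods with $\Lcal_{\RW}$ restricted to $[\Hcal_{W,G}]^{\Aut(W)}$, \emph{i.e.}~\eqref{eq:GepnerMS} in the invariant sector. This requires a genus-zero computation of the FJRW invariants of $(W,G)$ with $\Aut(W)$-invariant insertions. When $W$ is of Fermat type these insertions are narrow and the theory is concave, so the virtual cycle is the top Chern class $\ctop((R^1\pi_*E)^\vee)$; Proposition \ref{pro:GRR} together with the expansion $\ctop=[\exp(\sum_{k>0} s_k\ch_k)]_{\rm top}$ then yields a closed hypergeometric formula for $J_{\RW}$, which one expands at $t=0$ and matches the periods term by term as in \cite{ChRu}. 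In general one reduces to the concavity and index-zero subcases of \S\ref{subsect:virt} by the dimension count of the preceding Lemma (this is where $N\ge 5$ and the Calabi--Yau condition enter), so that the virtual cycle is again a characteristic class of $R\pi_*E$ and Proposition \ref{pro:GRR} applies.

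Finally, analytic continuation along a path in $(\PP^1)^\times$ of the flat sections of the Gauss--Manin connection, written in the two distinguished bases coming from the $t=\infty$ and $t=0$ ends, produces the $t$-independent linear map $\mathbb U_{\text{\rm LG-CY}}$; it is degree preserving because both mirror maps are, and symplectic by the Orlov-equivalence argument of \cite{CIR} --- which is precisely what lets one quantize it. This gives part (1) of Conjecture \ref{conj:LGCY} in the $\Aut(W)$-invariant sector. Part (2), $\Dcal_{\GW}=\widehat{\mathbb U}_{\text{\rm LG-CY}}(\Dcal_{\RW})$, would then follow from Givental's quantization formalism once one knows the comparison of total potentials is controlled by the genus-zero data: for generically semisimple $\Aut(W)$-invariant cohomological field theories this is Teleman's reconstruction, and in the non-semisimple cases (such as the quintic itself) one falls back on the argument of \cite{ChRu}. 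The main obstacle, as I see it, is concentrated in the two middle steps: producing a usable closed-form genus-zero FJRW computation for non-Fermat $(W,G)$ --- where even the $\Aut(W)$-invariant classes need not be narrow (Remark \ref{rem:fixed_vs_Autinv}) and one must control $H^0$ of the $W$-structure --- and, dually, controlling the Gromov--Witten side, hence the $t=\infty$ mirror theorem, when $\PP(w_1,\dots,w_N)$ is not Gorenstein and $X_W$ contains coordinate subspaces, precisely the situation the paper singles out as problematic for the virtual fundamental cycle.
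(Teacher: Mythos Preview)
This statement is a \emph{conjecture}; the paper does not prove it but rather outlines exactly the strategy you describe (see Remark \ref{rem:strategy} and the discussion following Conjecture \ref{conj:restLGCY}): establish the ${\rm CY}{{\mid}}\reflectbox{\rm CY}$ and ${\rm LG}{{\mid}}\reflectbox{\rm CY}$ mirror theorems at $\infty$ and $0$ respectively, then analytically continue along the $B$-model family. Your identification of the known cases (Fermat/Gorenstein, via \cite{CCLT} on the GW side and concavity plus Proposition \ref{pro:GRR} on the FJRW side) and of the open obstacles (non-Gorenstein ambient, non-narrow $\Aut(W)$-invariant states) is accurate and aligned with the paper's own assessment.

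One correction: your sentence ``In general one reduces to the concavity and index-zero subcases of \S\ref{subsect:virt} by the dimension count of the preceding Lemma'' overreaches. That Lemma only shows that the $\Aut(W)$-invariant classes close up to a cohomological field theory (loop-type gluing contributes zero for $N\ge 5$); it says nothing about the \emph{computability} of the genus-zero virtual cycle. Concavity in genus zero is not a consequence of any dimension count here --- for Fermat it comes from the equality $[\Hcal_{W,G}]^{\Aut(W)}=[\Hcal_{W,G}]^{\rm narrow}$, and for non-Fermat $W$ it can genuinely fail (Remark \ref{rem:fixed_vs_Autinv}). The paper is explicit that beyond the Gorenstein/Fermat range there is presently no general method (see the paragraph after Conjecture \ref{conj:restLGCY}), so this step should remain flagged as open rather than presented as a reduction.

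A smaller point: your appeal to Teleman's reconstruction for part (2) is extraneous to the paper's discussion and is unlikely to apply --- the $\Aut(W)$-invariant quantum cohomology of a Calabi--Yau is typically not semisimple (the quintic is already an instance), so the higher-genus statement remains genuinely conjectural even once genus zero is settled.
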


    The above conjecture was proved in genus zero for the quintic three-fold by
     the authors \cite{ChRu}
    and for Fermat hypersurfaces in general and $G=\langle j \rangle$ by
    the authors in collaboration with Iritani
    \cite{CIR} (this is the same as saying that the correspondence
holds for every Calabi--Yau hypersurface within a Gorenstein weighted projective stack).
Indeed the case of the quintic three-fold is special,
   because, there, Conjecture \ref{conj:LGCY} in genus
zero follows from Conjecture \ref{conj:restLGCY}.
Then the proof involves calculating the $J$-function of FJRW theory and
a comparison with the $J$-function of the GW side obtained
    by Coates--Corti--Lee--Tseng \cite{CCLT}. We refer to Remark \ref{rem:strategy}
for discussion ot the proof of Conjecture \ref{conj:restLGCY}.

    The general cases of nonGorenstein hypersurface or
larger groups is uncharted territory in GW theory. The starting
point of the proof in the Gorenstein case is
the  observation that the genus-zero theory is
concave; \emph{i.e.} the virtual cycle can be phrased
as the top Chern class
of a bundle. Then, Grothendiek--Riemann--Roch can be applied (see \S\ref{subsect:virt}).
Beyond the Gorenstein case, we do not have such a general method to
compute invariants.
In this sense, the problem is similar in nature
to the computation of the higher genus GW theory of the quintic. There, the difficulty
is also the lack of concavity. In this genus-zero case it
should be noted that the problem may have a chance to 
be approached via
Givental's theory.
For this reason it is clear how this problem not only represents
an exciting new direction in quantum cohomology, but could also
shed new light on GW theory in higher genera.

\subsubsection{Global mirror symmetry}

Let us set up the $B$ side; \emph{i.e.}, the
analogue of the family of Calabi--Yau
three-folds $X_{W,t}^{\vee}$ parametrized by $t$ in
$\PP^1$ from the introductory section.

It is well known that the genus-zero $B$ model
theory corresponds to a period integral vector,
a fundamental object in classical complex geometry.
Given a nondegenerate quasihomogeneous and invertible
polynomial $W$ in $N$ variables of
charges $q_1,\dots,q_N$ adding up to $1$ (CY condition),
we consider the hypersurface
$\{W^{\vee}=0\}$.
It lies naturally in a weighted projective
stack $\PP(w_1, \dots, w_N)$
for suitable choices of positive integers $w_1,\dots,w_N$.
Consider a group of diagonal symmetries $G$ containing $j_W$ ($A$-admissible)
and
included in $SL_W$ ($B$-admissible); then,
by Corollary \ref{cor:MS},
the orbifold $[\{W^{\vee}=0\}/\wt G^{\vee}]$ is the mirror of the
hypersurface defined by $W$.
We consider complex deformations
of the orbifold $[X_{W^{\vee}}/\wt{G^\vee}]$.
Let us focus on the so called
\emph{marginal deformations}; \emph{i.e.}, let
$M_1, \dots, M_l$ be the monomial generators
of the local algebra $\Qcal_{W^\vee}$ \emph{of degree $1$} and
invariant under $G^\vee$.
Then, consider the family of
hypersurfaces
\begin{equation}\label{eq:family}
H_{\pmb a}=\big\{a_0 W^{\vee}+
\textsum_{i=1}^l a_i M_i=0\big\}\subset \PP(w_1,\dots,w_N).
\end{equation}
On an open subscheme of $\PP^{l}$ we may regard
this as a family of Calabi--Yau orbifolds.
The automorphism group
$\Aut(W^{\vee})$ acts on the family of stacks \eqref{eq:family} and
on the base scheme $\PP^l$.
Let us mod out the cyclic subgroup $\langle j\rangle$ acting trivially
everywhere.
Since the morphism is $\Aut(W^\vee)/\langle j\rangle$-equivariant,
we obtain a morphism between the corresponding quotient stacks.
Furthermore, since $G^{\vee}$ acts trivially on the base scheme,
we get a family of Calabi--Yau orbifolds over an open substack of
$[\PP^l/Z]$, with $Z=\Aut(W^\vee)/G^{\vee}$.

In this way, the $B$ side of mirror symmetry is defined.
It is a family of quotient stacks of the form
$$X_{W,\pmb a }^{\vee}=\left[H_{\pmb a}/\wt{G^{\vee}}\right]$$
parametrized by $\pmb a\in[\PP^l/Z].$ On an open substack of
$[\PP^l/Z]$ the family is fibred in Calabi--Yau orbifolds
(smooth Deligne--Mumford stacks whose canonical line bundle is trivial)
\begin{equation}\label{eq:familyCYorb}
 \xymatrix@R=.1cm{
X_{W,\pmb a }^{\vee} \ar[dd]\ar[rr] && \Xcal^\vee \ar[dd]^\pi\\
&\square & \\
\pmb a \ar[rr] && \Omega
}
\end{equation}

\begin{rem}\label{rem:Zcyclic}
 In the special case wher $G^{\vee}=SL_{W^{\vee}}$ the
group $Z$ is cyclic.
Furthermore, when we start from a Fermat polynomial $W$ of degree $d$,
we have $W^\vee=W$ and $Z=\ZZ_d$.
\end{rem}

Let us define the
vector bundle of primitive cohomology with complex coefficients.
On the point $\pmb a$ of the open substack $\Omega$
of $[\PP^1/Z]$
consider the
vector bundle
$$V\longrightarrow \Omega\subset[\PP^l/Z],$$ whose fibre is dual to
the $\wt{G^\vee}$-invariant
part of the kernel of
$$i_*:H_*(H_{\pmb a};\CC)\rightarrow H_*(\PP(\pmb w);\CC).$$
Indeed $i_*$ is an isomorphism in all degrees up to the middle dimension $N-2$.
In degree $N-2$, we have a surjective morphism and the kernel
is precisely the so called
variable (or primitive) homology of the hypersurface.
More systematically
we can set
$R^{N-2}\pi_*(\CC)\otimes \Ocal,$
where $\pi$ is the family \eqref{eq:familyCYorb}.
Then the primitive cohomology sheaf is the
kernel of the Lefschetz operator
$L\colon R^{N-2}\pi_*(\CC)\otimes \Ocal\longrightarrow
R^{N}\pi_*(\CC)\otimes \Ocal.$

\begin{rem}[the relation with the local algebra]
Fibre by fibre, the $\wt{G^\vee}$-invariant part
may be equivalently regarded as the $G^\vee$-invariant
part of the local algebra of $W^\vee_{\pmb a}=a_0W^\vee+\sum_{i=1}^l a_iM_i$
$$\Qcal_{W^\vee_{\pmb a}}=\CC[x_1,\dots,x_N]/\Jac(W^\vee_{\pmb a}).$$
In particular, for $G=\langle j_W\rangle$,
we have $G^\vee=SL_{W^\vee}$
and $V$ is a rank-$(N-1)$ vector bundle
over a one-dimensional base $\Omega$.
\end{rem}

\begin{rem}[Gauss--Manin connection]
On $V$ there is a flat connection $\nabla$, the Gauss--Manin connection,
   given by the local system of integer cohomology
$H^{N-2}(X_{W,\pmb a}^{\vee}  ;\ZZ) \subset H^{N-2}(X_{W,t}^{\vee}  ;\CC)$.
This may be regarded as follows.
Choose a particular fibre $X_{W,\pmb a}^\vee$
and a  basis of $(N-2)$-cycles $\Gamma_1 , \dots, \Gamma_{\rk V}$
for the primitive homology $\ker(i_*)$.
Since the fibration on the scheme $\Omega$ is locally trivial, a local
trivialization can be used to extend the cycles $\Gamma_1,\dots,\Gamma_{\rk V}$
from the chosen
fibre $X_{W,\pmb a}^\vee$ to cycles
$\Gamma_i(z)$ on nearby fibres $X_{W,\pmb a(z)}^\vee$.
This may rephrased as saying that $V$ has a connection $\nabla$ and
that locally over $\Omega$ we can extend a basis of a fibre of $V$ to
a basis of flat sections.
\end{rem}

\begin{rem}[monodromy]
Since the base $\Omega$ is not contractible, the connection may have
nontrivial monodromy.
We may phrase this explicitly using the above cycles $\Gamma_i(z)$
extending the basis $\Gamma_1,\dots,\Gamma_{\rk V}$ of $\ker(i_*)$ over
$\pmb a$.
Indeed, they are locally
constant in the parameter $z$. However,
transporting $\Gamma_i$ along each closed path
produces a cycle homologous to $T\Gamma_i$
for some linear map $T$ (monodromy operator).

Another approach to this monodromy operator is
provided by period integrals.
Choose an
holomorphic $(n,0)$-form $\omega$ on $X_{W,\pmb a}^\vee$.
We can extend $\omega$ locally on a neighbourhood of $\pmb a $ to a holomorphic $(n,0)$-form
on the family of CY orbifolds.
Then, define  the periods integrals of $\omega$
as $$\omega_1(z)=\int_{\Gamma_1(z)}\omega,
\quad \dots,\quad \omega_{\rk V}=\int_{\Gamma_{\rk V}(z)}\omega.$$
They extend
by analytic continuation to multiple-valued functions on $\Omega$,
transforming according to the same monodromy trasformation $T$
operating on the homology classes of the cycles.
We point out that  we can always rescale $\omega$
by a globally holomorphic function
\begin{equation}\label{eq:rescaling}
 \omega\mapsto f\omega;\end{equation}
so the period integrals are defined up to
rescaling.
\end{rem}

\begin{rem}[the base points $0$ and $\infty$]\label{rem:points}
Now we analyse  two special fibres of the above family.
The prototype case is that of the quintic and more generally that of
a homogeneous Fermat polynomial $W$ of degree $N$ in $N$ variables
paired with the group $\langle j_W\rangle$.
Then, $W=W^\vee$ and $G^\vee=SL_W$,
the base is one-dimensional and parametrized by the
homogeneous coordinates $(a_0,a_1)$:
the monodromy is \emph{maximally unipotent}
around $a_0=0$ and \emph{diagonalizable} around $a_1=0$.
This are the Gepner point $0$ and the large volume complex structure point $\infty$.

In the more general set up, we still focus on
two points named $0$ and $\infty$. We assume that $\prod_jx_j$ is a nonvanishing element
of $\Qcal_{W^\vee}$. This is the case apart from a few degenerate cases, which are
treated for example in \cite{MR}.
Then let us set $M_1=\prod_jx_j$
and
\begin{equation}\label{eq:twopoints}
0=(1,0,0,\dots,0)\qquad\text{ and }\qquad\infty=(0,1,0,\dots,0).
\end{equation}
Consider the overlying
fibres
$X_{W,0}^\vee$ and
$X_{W,\infty}^\vee$.
We expect that $\infty$ is the analogue of
the {\em large complex structure point} in the introductory section,
whereas
$0$ should play the role of the \emph{Gepner point}.
More precisely, we propose the
following conjectural picture.
\end{rem}

\medskip

We conjecture that the period integral at $\infty$
 encode the Gromov--Witten theory of
the CY orbifold $[X_W/\wt G]$.

  \begin{conj}[mirror symmetry ${\rm CY}{{\mid}}\reflectbox{\rm CY}$]\label{conj:MSCYtoCY}
       There is a mirror map matching a neighborhood
       of the origin in
       $H^{1,1}_{\CR}([X_W/\wt{G}])^{\Aut(W)}$
       with a neighborhood of the large complex structure point
       $\infty$.
       The mirror map identifies
       the $J_{\GW}$-function
       to the \emph{period integrals}
       $\omega_1,\dots,\omega_{\rk V}$ around $\infty$
       after a suitable rescaling of the form \eqref{eq:rescaling}.
       \end{conj}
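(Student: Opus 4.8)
The plan is to prove this as a mirror theorem in the style of Givental and Lian--Liu--Yau, adapted to orbifold targets and to ambient weighted projective stacks that need not be Gorenstein. Three ingredients are needed: an explicit $B$-side computation of the period integrals near $\infty$, an explicit $A$-side computation of the restriction of $J_{\GW}$ for $[X_W/\wt G]$ to $H^{1,1}_{\CR}([X_W/\wt G])^{\Aut(W)}$, and a matching of the two via a change of variables (the mirror map) together with the normalization \eqref{eq:rescaling} of the holomorphic form.

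First I would carry out the $B$-side computation. The family $H_{\pmb a}\subset\PP(w_1,\dots,w_N)$ is a family of hypersurfaces in a toric Deligne--Mumford stack, so its $\wt{G^\vee}$-invariant periods satisfy a GKZ/Picard--Fuchs hypergeometric system. Since $\infty=(0,1,0,\dots,0)$ is a point of maximally unipotent monodromy (Remark \ref{rem:points}), near $\infty$ there is a distinguished fundamental solution $\omega_0(\pmb a)$ whose logarithmic derivatives generate the remaining periods $\omega_1,\dots,\omega_{\rk V}$; after dividing by $\omega_0$ (the rescaling \eqref{eq:rescaling}) these assemble into a vector-valued hypergeometric series $I^B(\pmb q,z)$, where $\pmb q$ are exponentiated coordinates on the base centred at $\infty$. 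This step is essentially the toric period computation of Batyrev and Hosono--Klemm--Theisen--Yau together with its orbifold refinement, the only novelty being the bookkeeping of the $\wt{G^\vee}$-invariant piece and of the non-reflexive ambient polytope.

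Next I would handle the $A$-side. By the invariant/untwisted mirror symmetry \eqref{eq:resttheories} together with Corollary \ref{cor:MS}, the space $H^{1,1}_{\CR}([X_W/\wt G])^{\Aut(W)}$ has dimension equal to the number $l$ of marginal deformations $M_i$, so both moduli are $l$-dimensional and the mirror map can be a local analytic isomorphism between neighbourhoods. One then needs the restriction of $J_{\GW}$ (equivalently, a suitable $I$-function lying on $\Lcal_{\GW}$) to this subspace. When the ambient stack is Gorenstein this follows from the orbifold quantum Lefschetz hyperplane principle of Coates--Corti--Lee--Tseng applied to the explicitly known $J$-function of $\PP(w_1,\dots,w_N)$, exactly as in the Fermat case treated in \cite{CIR}; the concavity recalled in \S\ref{subsect:virt} is what makes this work. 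Finally, I would invoke Givental's uniqueness lemma: a family of points of $\Lcal_{\GW}$ of the form $-z\Phi_0+\tau+O(z^{-1})$ is determined by $\tau$ together with $\Lcal_{\GW}$, so once the leading asymptotics of $I^B$ and of $J_{\GW}$ are matched under the mirror map $\pmb q\mapsto\tau(\pmb q)$ read off from the ratios $\omega_i/\omega_0$ --- and, if necessary, after a Birkhoff factorization in $z$ --- the two functions coincide, which is the assertion of the conjecture.

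The hard part will be the $A$-side computation beyond the Gorenstein case. When $\PP(w_1,\dots,w_N)$ is not Gorenstein, $X_W$ contains coordinate strata, the genus-zero theory of $X_W$ is no longer concave, and the quantum Lefschetz theorem does not apply directly --- this is precisely the difficulty emphasized around \eqref{eq:family} and in \S\ref{subsect:globalMS}. To push through I would replace stable maps by $\ep$-stable quasimaps and use a wall-crossing argument \`a la Ciocan-Fontanine--Kim to relate the quasimap $I$-function, which remains computable by torus localization on the GIT quotient even in the non-Gorenstein case, to $J_{\GW}$; an alternative is a direct localization analysis of the obstruction bundle along the bad strata. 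Checking that the resulting $A$-side generating function matches $I^B$ term by term --- and that the $\Aut(W)$-invariant truncation is compatible on the two sides, which is where the cohomological LG--CY correspondence (Theorem \ref{thm:LGCYstates}) and \eqref{eq:resttheories} do the cohomological bookkeeping --- is the crux of the argument.
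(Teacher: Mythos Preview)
The statement you are attempting to prove is a \emph{conjecture} in the paper, not a theorem; the paper offers no proof of its own. Immediately after stating it, the authors remark that the conjecture is known in the smooth case by Givental and Lian--Liu--Yau and for CY hypersurfaces in Gorenstein weighted projective space by Coates--Corti--Lee--Tseng, and they explicitly flag the general (non-Gorenstein) case as open. So there is nothing to compare your proposal against in the paper except this attribution of the known cases.

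Your outline is a fair summary of how the Gorenstein case is actually handled in the cited literature: GKZ/Picard--Fuchs on the $B$ side, orbifold quantum Lefschetz on the $A$ side, and a Birkhoff/uniqueness argument to match $I$ to $J$. Two points deserve caution, however. First, you assert that $\infty$ has maximally unipotent monodromy by appeal to Remark~\ref{rem:points}, but the paper does \emph{not} claim this; it says the fibre over $\infty$ is highly singular, that for general $G$ one may need birational modifications of the base, and that only ``some analogue'' of maximal unipotency is expected. So the existence of the distinguished $\omega_0$ and the canonical mirror coordinates is not a given in the generality of the conjecture. Second, your non-Gorenstein strategy (quasimap wall-crossing or direct localization on bad strata) is a reasonable research plan, but it is precisely the open problem the authors highlight: concavity fails, quantum Lefschetz does not apply, and no general computation of $J_{\GW}$ on these targets is available. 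Presenting this as a step in a proof rather than as a conjectural approach overstates what is currently known.
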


It is generally believed that the maximally
quasiunipotent monodromy of the periods
indicates where the $B$ model can be related to
the Gromov--Witten theory of the
mirror variety. Since, in our construction, the fibre $X^\vee_{W, \infty}$
at $\infty$
is highly singular, it
is delicate to describe the monodromy at infinity.
Whereas for $G=\langle j\rangle$,
the singularities occur only over isolated points,
for general choices of $G$ it may be useful to
study birational modifications of
$[\PP^l/Z]$ and of the overlying Calabi--Yau family.
After suitable birational transformations of this base scheme,
we still expect some analogue of the condition of maximal unipotency.
See Morrison \cite{Morr} and Deligne \cite{De}
for more precise treatments on the relation between mirror symmetry
maximal unipotency at the large complex
structure point.

\medskip

At the special point $0$, the monodromy
is diagonalizable. We expect that the local picture encodes $\RW$ theory for
the potential $W$ with respect to $G$.

       \begin{conj}[mirror symmetry ${\rm LG}{{\mid}}\reflectbox{\rm CY}$]\label{conj:MSLGtoCY}
       There is a mirror map matching a neighborhood of
       the origin of $[\Hcal^{1,1}_{W, G}]^{\Aut(W)}$
       with a neighborhood of the Gepner point
       $0$. Via the mirror map,
       the $J_{\RW}$-function is
       matched to the \emph{period basis} $\omega_1,\dots,\omega_{\rk V}$
       after a suitable rescaling as in \eqref{eq:rescaling}.
       \end{conj}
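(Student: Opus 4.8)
The plan is to follow the strategy that succeeded for the quintic three-fold in \cite{ChRu} and for Fermat Calabi--Yau hypersurfaces paired with $\langle j_W\rangle$ in \cite{CIR}: compute the genus-zero data of $\RW$ theory in closed form, compute the period integrals near the Gepner point as the local solution basis of a Picard--Fuchs system, and identify the two after the rescaling \eqref{eq:rescaling}.

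First I would produce an explicit formula for the restriction of $J_{\RW}$ to $[\Hcal^{1,1}_{W,G}]^{\Aut(W)}$. By the lemma preceding Conjecture \ref{conj:restLGCY}, $\Aut(W)$-invariant insertions form a self-contained cohomological field theory, so this restriction is well defined and, by Remark \ref{rem:Jfunction}, determines the corresponding part of $\Lcal_{\RW}$. In the Gorenstein case --- each $w_j$ dividing $d$, in particular $W$ of Fermat type --- the genus-zero moduli problem is concave, so the virtual cycle is $\ctop\big((R^1\pi_*\bigoplus_j \mathcal L_j)^\vee\big)$; expanding this with the Grothendieck--Riemann--Roch formula of Proposition \ref{pro:GRR} yields an explicit hypergeometric-type generating series (an $I$-function) lying on the cone $\Lcal_{\RW}$. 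Dividing this $I$-function by its component along ${\pmb 1}_{j_W}$ and re-expanding in the resulting coordinate (the mirror map) recovers $J_{\RW}$.

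Second, on the $B$-side I would realise the period vector of the holomorphic form on the family \eqref{eq:family} as an explicit local solution basis of its GKZ / Picard--Fuchs system near the point $0=(1,0,\dots,0)$. There the monodromy is diagonalizable, so the local solutions are fractional-power hypergeometric series; a Griffiths--Dwork residue computation expresses their leading terms through the $G^\vee$-invariant part of the local algebra $\Qcal_{W^\vee_{\pmb a}}$, which by Krawitz's isomorphism (Theorem \ref{thm:Kra}) together with the cohomological correspondence (Theorem \ref{thm:LGCYstates}) is canonically $[\Hcal^{1,1}_{W,G}]^{\Aut(W)}$. The last step is the comparison: the $I$-function of the first step and the period series of the second should agree coefficient by coefficient after the normalization \eqref{eq:rescaling}, and reading off the degree-$(1,1)$ coordinates on the two sides produces the mirror map of Conjecture \ref{conj:MSLGtoCY}.

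The hard part is everything outside the Gorenstein / Fermat range. When some $w_j$ fails to divide $d$ the genus-zero FJRW theory is no longer concave --- $\pi_*\mathcal L_j$ need not vanish --- so Proposition \ref{pro:GRR} does not apply directly and there is no general method to compute the virtual cycle (the same obstruction that blocks higher-genus GW theory of the quintic). In parallel, for $G\supsetneq\langle j_W\rangle$ the base of the mirror family is the stack $[\PP^l/Z]$ with $Z=\Aut(W^\vee)/G^\vee$ possibly noncyclic and the fibre over $\infty$ badly singular, so even writing down the Picard--Fuchs system near $0$ and controlling its local exponents seems to require first resolving $[\PP^l/Z]$ and the overlying Calabi--Yau family. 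I expect that a Givental-style reconstruction --- twisted $I$-functions and a quantum Lefschetz principle adapted to the orbifold / non-concave setting --- will be needed to get past the failure of concavity, while the representation theory of $\Aut(W)$ is what keeps the dimensions of the two sides aligned.
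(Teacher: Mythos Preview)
The statement you are addressing is a \emph{conjecture}, and the paper contains no proof of it. What the paper does say, immediately after stating Conjecture~\ref{conj:MSLGtoCY}, is that the known cases are the quintic with $G=\langle j_W\rangle$ (proved in \cite{ChRu}) and, in the restricted $\Aut(W)$-invariant form of Conjecture~\ref{conj:restLGCY}, the pairs $(W,\langle j\rangle)$ with $W$ Fermat / ambient space Gorenstein (proved in \cite{CIR}). The general statement remains open.

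Your proposal is therefore not a proof to be compared against the paper's proof, but rather a strategy, and as such it is an accurate summary of the method used in those references and of the obstructions the paper itself flags. Your first step (concavity $\Rightarrow$ top Chern class $\Rightarrow$ GRR via Proposition~\ref{pro:GRR} $\Rightarrow$ hypergeometric $I$-function on $\Lcal_{\RW}$) is exactly the mechanism of \cite{ChRu} and \cite{CIR}; your second step (Picard--Fuchs / GKZ solutions near the diagonalizable-monodromy point $0$, matched via Krawitz's isomorphism) is the $B$-side half of Remark~\ref{rem:strategy}. Your diagnosis of the obstacles is also the paper's: loss of concavity outside the Gorenstein range (discussed after Conjecture~\ref{conj:restLGCY} and in the paragraph on nonGorenstein cases in \S\ref{subsect:globalMS}), and the need to birationally modify $[\PP^l/Z]$ to control the local system near $\infty$ (the paragraph following Conjecture~\ref{conj:MSCYtoCY}).

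One caution: your appeal to the lemma preceding Conjecture~\ref{conj:restLGCY} to justify restricting to $\Aut(W)$-invariant insertions requires $N\ge 5$, and more importantly only shows that the restricted theory is a cohomological field theory; it does not by itself guarantee that the restricted $J$-function determines enough of $\Lcal_{\RW}$ to carry out the comparison beyond $H^{1,1}$. The paper makes exactly this point in the ``generalizations'' remark after Theorem~\ref{thm:quinticLGCY}: in dimension three the $H^{1,1}$ slice suffices, but in higher dimension it may not, and the equivariant argument of \cite{CIR} is what partially compensates. So even within the Gorenstein range your outline is incomplete without that additional input.
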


The mirror symmetry ${\rm CY}{{\mid}}\reflectbox{\rm CY}$ conjecture was proved for the smooth case by
       Givental and Liang--Liu--Yau and for CY hypersurface of Gorenstein weighted
       projective space by Corti--Coates--Lee--Tseng \cite{CCLT}. For further
       generalizations we refer to \cite{CIT} and references therein.

       The mirror symmetry conjecture of type ${\rm LG}{{\mid}}\reflectbox{\rm CY}$ was proved for the quintic
       polynomial in five variables and for the group $\langle j \rangle$ by the authors.  For $(W,\langle j \rangle)$ in the Gorenstein case
       and in the restricted version \ref{conj:restLGCY}, it is proven by
       the authors with Iritani \cite{CIR}.
       We refer to \cite{CIR} for a precise statement.

\begin{rem}[LG-CY correspondence via global mirror symmetry]\label{rem:strategy}
The proof of these conjectures allows us to perform
a parallel transport along $\nabla$ on the $B$ side yielding
the identification
between the LG model and the CY
geometry predicted in the LG-CY correspondence conjecture \ref{conj:LGCY}.
Let us restate the scheme of the argument referring to Figure \ref{fig:picture}
for clarity. The two mirror symmetry conjectures above involve the vertical arrows
in Figure \ref{fig:picture}
on the left hand side
and on the right hand side respectively.
The LG-CY correspondence conjecture \ref{conj:LGCY} connects
the two $A$ model structures on the bottom of the picture.
Mirror symmetry allows us to lift the correspondence to
the upper side of the picture, where we can
use the vector bundle $V$ and its flat connection $\nabla$.
\end{rem}

Indeed,
in this way
we verify the genus-zero part
of Conjecture \ref{conj:LGCY} for the quintic;
in other words we verify the claim
that the parallel transport of a basis of flat sections at $0$ is
related to a basis of flat sections at $\infty$
by a constant linear map
${\mathbb U}_{\text{\rm LG-CY}}$. Since ${\mathbb U}_{\text{\rm LG-CY}}$ is symplectic, the
genus-zero part of Conjecture \ref{conj:LGCY} follows, see \cite{ChRu}.

\begin{thm}  \label{thm:quinticLGCY}
The LG-CY correspondence \ref{conj:LGCY} holds in genus zero and matches the
$\GW$ theory of the quintic three-folds in $\PP^4$ to the $\RW$ theory of the
isolated singularity of the corresponding affine cone.
\end{thm}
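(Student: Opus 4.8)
The plan is to follow the global mirror symmetry strategy of Remark~\ref{rem:strategy}, specialized to $W=x_1^5+\cdots+x_5^5$ and $G=\Jgr$, reducing the theorem to the analytic continuation of a single fourth-order Picard--Fuchs system along $(\PP^1)^\times$; this is the argument of \cite{ChRu}, and I outline the steps here. First I would set up Givental's formalism as in \S\ref{subsect:LGCY}: the genus-zero data of each theory is encoded by its Lagrangian cone, $\Lcal_\GW\subset\Vcal_\GW$ and $\Lcal_\RW\subset\Vcal_\RW$, and each cone is recovered from the associated $J$-function (Remark~\ref{rem:Jfunction}). On the Calabi--Yau side $\Lcal_\GW$ is already understood: the quintic mirror theorem of Givental \cite{Gi} and Lian--Liu--Yau \cite{LLY} identifies $J_\GW$, after the mirror map, with the hypergeometric $I$-function built from the ratio $\prod_{m=1}^{5d}(5H+mz)\big/\prod_{m=1}^{d}(H+mz)^5$, which is precisely the period vector of the Dwork family $X^\vee_{W,t}$ expanded around the large complex structure point $t=\infty$.

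The core of the argument is the $\RW$-side computation, establishing the mirror symmetry conjecture of type ${\rm LG}{{\mid}}\reflectbox{\rm CY}$ (Conjecture~\ref{conj:MSLGtoCY}) for the quintic. Since $W$ is Fermat and $G=\Jgr$, the genus-zero theory is concave (the ``Concavity'' case of \S\ref{subsect:virt}): on each narrow component $W(h_1,\dots,h_n)_{0,n}$ the virtual cycle equals $\ctop\big((R^1\pi_*E)^\vee\big)$ for $E=\oplus_{j=1}^5\Lcal_j$, and $\ctop=[\exp(\sum_{k>0}s_k\ch_k)]_{\rm top}$ with $s_k=(-1)^k(k-1)!$. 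Feeding in the Grothendieck--Riemann--Roch formula of Proposition~\ref{pro:GRR} converts every genus-zero narrow $\RW$ invariant into an explicit polynomial in $\kappa$- and $\psi$-classes on $\MMM_{0,n}$, which I would resum into a closed hypergeometric expression; a Birkhoff factorization together with the string and dilaton equations then exhibits this as the image of the corresponding $I$-function, recognizable as the period vector of the \emph{same} Dwork family, now expanded around the Gepner point $t=0$.

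With both cones realized as the images of the flat sections of $(V,\nabla)$ over $\Omega\subset(\PP^1)^\times$ near its two ends, the final step is analytic continuation along $\nabla$ from $0$ to $\infty$. The local system underlying $V$ is the rank-four solution space of the hypergeometric (Picard--Fuchs) operator of the mirror quintic, with regular singular points $0,1,\infty$; the transition matrix between a Frobenius basis at the Gepner point and the maximally-unipotent basis at the large complex structure point is a classical Mellin--Barnes computation producing a constant matrix, and since $J_\RW$ and $J_\GW$ are the images of the period vector under the respective mirror maps this matrix determines a $\CC[z,z^{-1}]$-valued, degree-preserving linear isomorphism $\mathbb U_{\text{\rm LG-CY}}$ with $\mathbb U_{\text{\rm LG-CY}}(\Lcal_\RW)=\Lcal_\GW$. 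Symplecticity of $\mathbb U_{\text{\rm LG-CY}}$ is then automatic: the polarization (the intersection pairing on $H^3(X^\vee_{W,t})$) is $\nabla$-flat, hence preserved by parallel transport, and it matches $\Omega_\RW$ and $\Omega_\GW$ under the two mirror maps. Part (1) of Conjecture~\ref{conj:LGCY} in genus zero follows, and the identification it produces is exactly the matching of the GW theory of the quintic with the $\RW$ theory of the singularity of the affine cone.

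I expect the main obstacle to be the $\RW$-side computation of the middle paragraph: passing from the Grothendieck--Riemann--Roch output of Proposition~\ref{pro:GRR} to a usable closed form for $J_\RW$ requires controlling the interaction of the $\kappa$-classes, the boundary contributions, and the string/dilaton relations, and then matching the result---up to the correct rescaling $\omega\mapsto f\omega$ of \eqref{eq:rescaling} and the correct mirror map---with the period expansion at the Gepner point. By contrast, once both $I$-functions are identified as solutions of one and the same ODE, the analytic continuation, though delicate, is essentially classical.
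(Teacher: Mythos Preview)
Your proposal is correct and follows essentially the same approach as the paper: it is the global mirror symmetry strategy of Remark~\ref{rem:strategy}, carried out for the Fermat quintic as in \cite{ChRu}, with the $\GW$ side supplied by Givental/Lian--Liu--Yau, the $\RW$ side computed via concavity and Proposition~\ref{pro:GRR}, and the two joined by Mellin--Barnes analytic continuation of the rank-four Picard--Fuchs system on the $B$ side. The paper itself defers the details to \cite{ChRu} and records exactly the outline you give, including your identification of the $\RW$-side $I$-function computation as the substantive step.
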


\begin{rem}[generalizations]
Technically the parallel transport of
the cycles $\Gamma_i(z)$ is difficult to perform explicitly.
Alternatively, in \cite{ChRu} we study
the period integrals $\omega_i$ and their Picard--Fuch's
equation. Locally, in the case
where $W$ is homogeneous and
$G$ equals $\langle j_W\rangle$,
the period integrals belong to the $(N-1)$-dimensional space
of solutions of the Picard--Fuchs equation (this is due to Griffith's transversality).
Since, for $G=\langle j_W\rangle$ and $G^\vee=SL_W$, this is
precisely the rank of the vector bundle $V$, we can avoid the parallel transport
and carry out analytic continuation of
the periods integrals instead.
This approach admits generalizations in the quasi-homogeneous setup; indeed,
there, the degree of the Picard--Fuchs equation and the dimension of the state
space $H^*(X_W)^{\Aut(W)}$ still match.

       For the quintic three-fold, this argument is
          enough to deduce the LG-CY correspondence for genus-zero Gromov--Witten theory
          entirely. This is not the case in higher dimension.
          The reason is technical: we can only compute the
$J(t,z)$-function  for $t\in H^{1,1}$. In dimension three it turns out
          that all invariants can be deduced from this data.
          In higher dimension, this may well be not enough to
          deduce the entire $J$-function.

          The problem is
          how to recover this information from
          the $B$ model. 
          In collaboration with Iritani \cite{CIR} we partly
          overcome this problem by means of 
          an equivariant theory argument.
This requires technical conditions such as assuming that the ambient space is Gorenstein.
         In both cases, more work is needed in this direction.
          \end{rem}

\subsection{Fulfilling global mirror symmetry in all genera: elliptic orbifold $\PP^1$}
\label{subsect:CYP1}
As we mentioned previously, the physical LG-CY-correspondence requires both $W$ and $G$ to be Calabi--Yau types. While the conjecture is
certainly false without $W$ being Calabi--Yau type,
our theorem on the state space suggests that it may still be true
when $G$ fails to be of Calabi--Yau type.
In this section, we pursue this direction
for $G=\Aut(W)$. The reason for this choice
is simple. By Krawitz's mirror symmetry theorem of type ${\rm LG}{{\mid}}\reflectbox{\rm LG}$,
we have $\Aut(W)^{\vee}=\{1\}$. In this case,
we have a well defined $B$ model for all genera.
Once Problem \ref{prob:orbifolding} is solved,
the same line of research
should hold more generally. Since
$[X_W/\wt{G}]$
is no longer Calabi--Yau for $G=\Aut(W)$, we cannot expect
to have a mirror object of the form of a CY variety. On the other hand the mirror still makes sense.
Indeed, we can exploit the Landau--Ginzburg model.
Consider the LG side illustrated in Section \ref{sect:classMS}.
The state space of $\RW$-theory of $(W, \Aut(W))$
is mirror to the state space of $(W^{\vee}, \{1\})$.
This is a unique situation in which  we have
a $B$ model theory for {\em all genera}.
Here, the
genus-zero theory is Saito's Frobenius manifold
structure on the tangent bundle of the miniversal
deformation of $W^{\vee}$. The higher genus theory
can be obtained via Givental's formalism.

The miniversal deformation is generated by
monomials of the local algebra
$\Qcal_{W^{\vee}}$. Among them, there are
the marginal deformation
generators of the $j_{W^{\vee}}$-invariant
and degree $1$. We label them by $M_{-1},
\dots, M_{-l}$ with $M_{-1}=\prod_i x_i$
and the rest by $M_1, \dots, M_{\mu-l}$
for Milnor number $\mu$. The miniversal deformation is
$$W^{\vee}_{a_{-l}, \dots, a_{-1}, a_0, a_1, \dots,
a_{\mu-l}}=\sum_{-l}^{\mu-l}a_i M_i,$$
where $M_0=W^\vee$.
Usually in singularity theory one considers only germ
of this functions; \emph{i.e.}, one imposes $\abs{a_i}<\epsilon$.
Here, we study global singularity theory to
allow the marginal deformation parameters $a_{<0}$
to vary to infinity. We still require
$\abs{a_i}<\epsilon$ for $i\ge 0$.
The most interesting aspect of this
case is the existence of a rigorous higher genus
theory due to Givental. Recall that the Frobenius
manifold in this case is generically
semisimple (this happens because
we can deform any singularity to Morse singularities).

We should mention that the $B$ model Calabi--Yau three-fold has a higher genus
potential in physics. It is supposed to have many
interesting properties and has been investigated
intensively in recent years by
Klemm and his collaborators \cite{ABK, HKQ}. For
example, it is expected to be nonholomorphic and
satisfy the so called holomorphic anomaly equation.
Furthermore, this antiholomorphic higher genus
generating function is expected to be a modular
form. Unfortunately, we cannot access
these information due to the lack of a mathematically
rigorous definition. On the other hand,
for $[X_W/\wt{G^\vee}]$, we do have a rigorous theory of
on the $B$ side for all genera. This gives us a quite unique
opportunity to study higher genus mirror symmetry. The above
idea has been put into practice by
Krawitz--Shen and Milanov--Ruan in dimension one.

Before describing their work, let us state two mirror
conjectures governing the LG-CY correspondence in
this case. Again, we refer to
$W^{\vee}_{\infty}=\prod_i x_i$ as the large complex
structure point and $W^{\vee}_0=W^{\vee}$ as the Gepner point (see \ref{rem:points}).

\begin{conj}[mirror symmetry ${\rm CY}{{\mid}}\reflectbox{\rm LG}$] \label{conj:MSCYtoLG}
There is a mirror map  matching a neighborhood of
$H^{1,1}_{\CR}([X_W/\wt{G}_W]; \CC)$ with a neighborhood
of the large complex structure point
$W^{\vee}_{\infty}$ such that the genus-$g$ potential
$\Fcal^g_{ GW}$  is matched to the genus-$g$
formal potential $\Fcal^g_{\rm formal}$ of Saito--Givental.
\end{conj}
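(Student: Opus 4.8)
The state space identification is already available: combining Theorem~\ref{thm:LGCYstates} with Krawitz's Theorem~\ref{thm:Kra} in the case $G=\Aut(W)$, so that $G^{\vee}=\{1\}$, yields
$$H^{*,*}_{\CR}\big([X_W/\wt G];\CC\big)\cong \Hcal_{W,\Aut(W)}\cong \Qcal_{W^{\vee}},$$
and the right-hand side is precisely the state space underlying Saito's Frobenius structure for $W^{\vee}$. So the content of the conjecture is the matching of the \emph{structures}, and the plan is to reduce everything to genus zero. Both cohomological field theories are generically semisimple; for $[X_W/\wt G]$ this is what forces us, in practice, into the dimension-one situation of the elliptic orbifold $\PP^1$, where $[X_W/\wt G]$ is one of $\PP^1_{3,3,3}$, $\PP^1_{4,4,2}$, $\PP^1_{6,3,2}$, $\PP^1_{2,2,2,2}$: the Saito--Givental theory of $W^{\vee}$ is semisimple because a simple elliptic singularity deforms to a Morse singularity, and the Gromov--Witten theory becomes semisimple after a generic shift along $H^{1,1}_{\CR}$. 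By the Givental--Teleman classification of semisimple cohomological field theories, each total descendant potential is reconstructed from its genus-zero data together with the Euler field. Hence it suffices to produce a mirror map identifying the two genus-zero Frobenius manifolds near the distinguished points, compatibly with the state space isomorphism above; the equalities $\Fcal^g_{\GW}=\Fcal^g_{\rm formal}$ for all $g$ then follow automatically.

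For the genus-zero matching I would compute both Frobenius manifolds explicitly. On the $A$ side, the genus-zero potential of the elliptic orbifold $\PP^1$ is determined by the WDVV equations, the string and divisor equations, the dimension axiom, and a small amount of initial data (a handful of three- and four-point invariants with orbifold insertions). These can be pinned down by a direct intersection computation on $\MMM_{0,n}$, or more efficiently by recognizing that the genus-zero generating function is built from quasi-modular forms on the relevant modular group --- the structure exploited in \cite{KSh} and \cite{MR}. This produces an explicit Frobenius structure in a neighborhood of the large complex structure (orbifold) point, where the Gromov--Witten series is expanded in the exponentiated K\"ahler variable.

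On the $B$ side I would compute Saito's Frobenius structure on the miniversal deformation of $W^{\vee}$. The delicate input here is the choice of \emph{primitive form}: for a simple elliptic singularity the primitive form is not unique, depending on a period (essentially the modulus of the elliptic curve in the Milnor fibre), so one fixes the normalized choice and then computes the flat coordinates, the metric, and the potential on the locus where the marginal parameter $a_{-1}$ --- the coefficient of $M_{-1}=\prod_i x_i$ --- runs to infinity. The mirror map is obtained by matching the flat coordinate dual to $M_{-1}$ with the K\"ahler parameter through the period map: $a_{-1}$ controls the $j$-invariant of the elliptic curve, and the associated period ratio is identified with the mirror coordinate around $W^{\vee}_{\infty}$. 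Since both genus-zero potentials solve WDVV with the same classical (cup-product) term --- guaranteed by the state space isomorphism and Theorem~\ref{thm:LGCYstates} --- and the same Euler grading, the comparison reduces to checking agreement of finitely many initial coefficients after the mirror map.

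The main obstacle I expect lies exactly at the interface of these two computations: controlling the primitive form of $W^{\vee}$ globally enough to reach the point $W^{\vee}_{\infty}$, and proving that the period integrals of that primitive form, expanded at the appropriate point, reproduce the Gromov--Witten $J$-function term by term. This is where the (quasi-)modularity of both sides is not a convenience but the mechanism forcing the two power series to coincide, and where the genuinely hard calculation resides; the case $\PP^1_{2,2,2,2}$ is heavier still, since both sides then carry an extra complex parameter to be matched. Once the genus-zero comparison and the mirror map are in place, the all-genus statement is formal via Teleman, although one should still verify that the reconstructed higher-genus potentials converge and admit the analytic continuation to a neighborhood of $W^{\vee}_{\infty}$ claimed in the conjecture.
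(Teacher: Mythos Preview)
The statement you are attempting to prove is labeled a \emph{Conjecture} in the paper, not a Theorem: the paper offers no proof of it in general. What the paper does contain is the citation of Krawitz--Shen (Theorem~\ref{thm:MSAut}) establishing the conjecture in the one-dimensional case of simple elliptic singularities, together with Milanov--Ruan's modularity result (Theorem~\ref{thm:Bmodel}). Your proposal is, in essence, a sketch of that one-dimensional argument: reduce to genus zero via Givental--Teleman semisimple reconstruction, then match the Frobenius manifolds by computing both sides and comparing through the period/mirror map. In that restricted setting your outline is in line with the approach the paper attributes to \cite{KSh} and \cite{MR}.

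However, as a proof of the \emph{conjecture as stated} there is a genuine gap, and you have already put your finger on it: the semisimplicity hypothesis fails outside dimension one. For $N\ge 5$ the quotient $[X_W/\wt G]$ with $G=\Aut(W)$ is a genuine Calabi--Yau orbifold of dimension $\ge 3$, its quantum cohomology is \emph{not} generically semisimple, and Givental--Teleman reconstruction is unavailable. Your sentence ``this is what forces us, in practice, into the dimension-one situation'' is an honest admission that the method does not reach the general statement; but then what you have written is a proof strategy for Theorem~\ref{thm:MSAut}, not for Conjecture~\ref{conj:MSCYtoLG}. One small correction even within the one-dimensional case: the paper explicitly notes that $\PP^1[\tfrac12,\tfrac12,\tfrac12,\tfrac12]$ is \emph{not} of the form $[X_W/\wt G]$ with $G=\Aut(W)$ (it requires an index-two subgroup and hence a solution to Problem~\ref{prob:orbifolding}), so it does not fall under the conjecture as formulated and should be dropped from your list.
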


\begin{conj}[mirror symmetry ${\rm LG}{{\mid}}\reflectbox{\rm LG}$]\label{conj:MSLGtoLG}
Let $G=\Aut(W)$. There is a mirror map matching a neighborhood
of $\Hcal^{1,1}_{W, G}$ with a neighborhood
of the Gepner point
$W^{\vee}_{0}$ such that the $\Fcal^g_{\RW}$-function
is matched to the formal potential $\Fcal^g_{\rm formal}$ of Saito--Givental.
\end{conj}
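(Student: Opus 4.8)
The plan is to reduce the all-genus identity to its genus-zero shadow together with semisimplicity, and then invoke the Givental--Teleman reconstruction theorem. Both sides carry the structure of a cohomological field theory in the sense of \S\ref{subsect:CohFT}: on the $A$ side this is the FJRW theory $(\Hcal_{W,\Aut(W)},\langle\cdot,\cdot\rangle^{W,\Aut(W)},\{\Lambda^W_{g,n,\Aut(W)}\},1)$ provided by Theorem \ref{thm:CohFT}, and on the $B$ side it is the Saito--Givental theory attached to the miniversal deformation of $W^{\vee}$, whose genus-zero part is K.~Saito's Frobenius manifold on the tangent bundle of the deformation space and whose higher-genus part is, by definition, the analytic continuation of Givental's quantization formula from the semisimple locus. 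Since the Milnor algebra $\Qcal_{W^{\vee}}$ of an isolated singularity deforms to a direct sum of $A_1$-singularities (Morse deformation), the $B$-side Frobenius manifold is generically semisimple; hence by Teleman's classification the corresponding cohomological field theory is uniquely determined by its genus-zero restriction and the canonical $R$-matrix reconstructed from it. Granting the analogous analyticity and semisimplicity for the FJRW theory of $(W,\Aut(W))$ --- see the obstacle below --- the same reconstruction applies on the $A$ side, so that once the genus-zero mirror statement is established on a semisimple open set the identity $\Fcal^g_{\RW}=\Fcal^g_{\rm formal}$ follows there, and then by analytic continuation at the non-semisimple Gepner point $W^{\vee}_0$, which is exactly the point at which $\Fcal^g_{\rm formal}$ is defined by continuation.

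The genus-zero step is the construction of the mirror map and the matching of the Lagrangian cone $\Lcal_{\RW}$ with the Saito--Givental cone near $W^{\vee}_0$. I would start from Krawitz's isomorphism $\Hcal_{W,\Aut(W)}\cong\Qcal_{W^{\vee},\{1\}}=\Qcal_{W^{\vee}}$ of Theorem \ref{thm:Kra}, which in the case $G=\Aut(W)$ is already known to respect the Frobenius-algebra structures and the pairings (Definition \ref{defn:Apairing} on the $A$ side, the residue pairing on the $B$ side), and then promote it to an isomorphism of Frobenius manifolds. Concretely one computes the restricted $J_{\RW}$-function along $\Hcal^{1,1}_{W,\Aut(W)}$: for $W$ of Fermat, chain or loop type the relevant FJRW classes are concave, so this is accessible through the Grothendieck--Riemann--Roch formula of Proposition \ref{pro:GRR}, in the spirit of \cite{ChRu} and \cite{CIR}. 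One then compares this with the period integrals of the primitive form of $W^{\vee}$ --- oscillatory integrals $\int e^{W^{\vee}_{\pmb a}/z}\,\phi\,dx$ --- expanded around the Gepner point, whose flat structure is Saito's. A Birkhoff-factorization argument upgrades the resulting agreement of ``small $J$-functions'' to the full genus-zero cones and fixes the mirror map.

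The main obstacle is precisely this genus-zero step, and within it three things require genuine work. First, one must prove convergence and analyticity of the FJRW genus-zero potential near the Gepner point, so that the a priori formal cone $\Lcal_{\RW}$ becomes an analytic object to which Birkhoff factorization and analytic continuation can be applied. Second, one must establish generic semisimplicity of the FJRW Frobenius manifold of $(W,\Aut(W))$; this is expected, since imposing $\Aut(W)$-invariance eliminates the problematic ``large'' broad sectors and the resulting Frobenius manifold should be Saito's, but it has to be verified intrinsically on the $A$ side. Third, the explicit comparison of the FJRW $J$-function with the Saito period must be carried out in the non-Gorenstein chain and loop cases, where the Grothendieck--Riemann--Roch bookkeeping of Proposition \ref{pro:GRR} is more delicate than in the Fermat case, although the Picard--Fuchs/period viewpoint is expected to extend as in the homogeneous situation. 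Once these are in hand, the higher-genus statement is a formal consequence of Givental--Teleman, exactly as in the one-dimensional case settled by Krawitz--Shen \cite{KSh} and Milanov--Ruan \cite{MR}, and what remains is the bookkeeping of matching the analytic continuation of Givental's formula defining $\Fcal^g_{\rm formal}$ with that of $\Fcal^g_{\RW}$.
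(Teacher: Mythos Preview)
The statement you are attempting to prove is a \emph{conjecture} in the paper, not a theorem; the paper offers no proof of it and explicitly presents it as open. The only case in which the paper reports the statement as established is that of simple elliptic singularities, and there the proof is due to Krawitz--Shen \cite{KSh} (Theorem \ref{thm:MSAut}), not to the present paper. So there is no ``paper's own proof'' to compare against.

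That said, your strategy is exactly the one that succeeds in the one-dimensional cases and is the natural template: identify the Frobenius manifolds in genus zero via Krawitz's isomorphism and explicit computation of $J_{\RW}$, then invoke semisimplicity and Givental--Teleman to propagate to all genera. You correctly flag the three genuine obstructions --- convergence/analyticity of the FJRW potential, intrinsic semisimplicity on the $A$ side, and the genus-zero comparison beyond the Fermat/Gorenstein regime --- and these are precisely the reasons the statement is recorded as a conjecture rather than a theorem. In particular, your phrase ``granting the analogous analyticity and semisimplicity for the FJRW theory'' is doing all the work: without an independent proof of these on the FJRW side, Teleman's theorem does not apply, and the argument is circular (you would be assuming the $A$-side Frobenius manifold is Saito's in order to conclude that it is). What you have written is a coherent research program, consonant with the paper's own discussion in \S\ref{subsect:CYP1}, but it is not a proof, and the paper does not claim one either.
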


\begin{conj}\label{conj:Bmodel}
Saito--Givental theory at $W^{\vee}_0$ and $W^{\vee}_{\infty}$
are related by analytic continuation and symplectic transformation.
\end{conj}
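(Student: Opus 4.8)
The plan is to realize both Saito--Givental theories as the two distinguished fibres of a \emph{single} family of Frobenius manifolds over the marginal-deformation base, and then transport one onto the other along the Gauss--Manin/quantum connection. Concretely, restrict the miniversal deformation $W^{\vee}_{\pmb a}=\sum_i a_i M_i$ to the subfamily in which only $a_0$ and $a_{-1}$ vary, i.e.\ the one-parameter family joining the Gepner point $W^{\vee}_0=W^{\vee}$ and the large complex structure point $W^{\vee}_{\infty}=\prod_i x_i$ of Remark \ref{rem:points}. Over the complement $\Omega^{\times}$ of the discriminant and of the two special points, M.~Saito's theory of primitive forms equips the deformation with a Frobenius manifold structure which is moreover \emph{semisimple} there, since a generic member deforms to a Morse function. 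The first task is to construct a primitive form \emph{globally} over $\Omega^{\times}$ (not merely as a germ at $0$), so that the associated flat connection, the higher-residue pairing, and the Euler field extend over the whole punctured base; one follows here the global Saito-theory constructions available for invertible Landau--Ginzburg models, with deformation invariance and monodromy invariance playing the same role as on the FJRW side.

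Next I would extract the monodromy data from the fundamental solution $S(\pmb a,z)$ of the quantum connection of this global Frobenius manifold. Because $S(\pmb a,z)$ takes values in maps of Givental's loop space preserving $\Omega_{\RW}$ --- this is exactly the symplecticity built into the higher-residue pairing --- analytic continuation of a frame of flat sections from a neighbourhood of $0$ to a neighbourhood of $\infty$ along a chosen path in $\Omega^{\times}$ is implemented by a \emph{constant} symplectic transformation, which is the desired $\mathbb U$. Its constancy on $(\PP^1)^{\times}$ and its symplecticity are the two points to nail down: the symplecticity follows from compatibility with the pairing, paralleling the Orlov-equivalence argument of \cite{CIR}, and constancy follows once one checks that the genus-zero cones $\Lcal$ of the Saito--Givental theory read in the flat frames at the two ends differ only by this transformation. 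In practice this can be carried out on the oscillatory integrals $\int e^{W^{\vee}_{\pmb a}/z}\,\zeta$ of the primitive form, by writing down their Picard--Fuchs system and continuing its solutions \`a la Mellin--Barnes, exactly as in the quintic computation of \cite{ChRu}.

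The last step promotes the genus-zero statement to all genera. On $\Omega^{\times}$ the Frobenius manifold is semisimple, so by the Givental--Teleman classification of semisimple cohomological field theories the total potential $\Dcal_{\mathrm{SG}}$ is uniquely the quantization of the genus-zero data by the $R$-matrix of the Frobenius structure. Hence, transporting this $R$-matrix along the same path and combining with $\mathbb U$ gives $\Dcal_{\mathrm{SG},\infty}=\widehat{\mathbb U}\,\Dcal_{\mathrm{SG},0}$ up to the usual central genus-one constant, which is the assertion of Conjecture \ref{conj:Bmodel}.

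The main obstacle --- and the reason this is stated as a conjecture --- is the behaviour at the two special fibres themselves. At $\infty$ the polynomial $\prod_i x_i$ is \emph{not} an isolated singularity, so the naive Saito construction degenerates; one must define ``Saito--Givental theory at $W^{\vee}_{\infty}$'' as a suitable limit of the globally defined structure and then prove that this limit exists and matches the expected object (the Gromov--Witten theory of $[X_W/\wt G]$ via Conjecture \ref{conj:MSCYtoLG}, or in dimension one the elliptic orbifold $\PP^1$ of \cite{KSh,MR}). Likewise semisimplicity fails exactly at $0$ and at $\infty$, so Teleman's reconstruction applies only on the open locus and the $R$-matrix may blow up at the ends; controlling this limit --- equivalently, controlling the non-semisimple/irregular asymptotics of the quantum connection near the Gepner point and near the highly singular fibre at $\infty$ --- is the crux. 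I expect this analytic-continuation-through-a-non-semisimple-point step, together with establishing a clean global primitive form, to absorb essentially all of the work; the bookkeeping with $\mathbb U$ and the quantization is then formal.
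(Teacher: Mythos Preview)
This statement is a \emph{conjecture} in the paper, not a theorem; the paper offers no proof, only the subsequent remark explaining why a symplectic transformation (and not merely analytic continuation) is expected, namely that Saito's Frobenius structure depends on a choice of primitive form. So there is no ``paper's proof'' against which to compare your proposal. You have correctly recognized this, and your final paragraph identifies precisely the obstructions the paper alludes to: the degeneracy of $\prod_i x_i$ at $\infty$, the failure of semisimplicity at the endpoints, and the primitive-form dependence.

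Your plan is in fact well aligned with how the paper proceeds in the one case where the conjecture \emph{is} established, namely dimension one (simple elliptic singularities $P_8$, $X_9$, $J_{10}$). There the Milanov--Ruan argument (Theorem~\ref{thm:Bmodel}) tracks the transformation of the Saito--Givental potential under $\tau\mapsto g\tau$ via the modular action on the symplectic basis of $H_1$ of the elliptic curve --- exactly the ``choice of primitive form $=$ choice of cycle basis'' mechanism you invoke --- and the resulting quasimodularity encodes the symplectic transformation. Combined with Krawitz--Shen (Theorem~\ref{thm:MSAut}), this yields the correspondence in all genera. Your outline via global primitive forms, oscillatory integrals, and Givental--Teleman reconstruction is the natural generalization of that argument; the paper simply does not claim to carry it out beyond the elliptic case.
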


\begin{rem}
Naively, one could expect that Saito--Givental theory at $W^{\vee}_0$ and
$W^{\vee}_{\infty}$ are related by analytic
continuation only; however, notice that the precise statement 
involves a subtle issue. Indeed,
the construction of the Frobenius manifold structure
in Saito's theory depends on a
choice of primitive form defined by choosing a
 basis of
middle dimension cycles or period integrals.
Here, we see the similarity between this case and the Calabi--Yau case.
\end{rem}

Now, let us describe the one-dimensional cases. Here, we are
concerned with $[X_W/\wt{G}_W]$ for an elliptic curve $X_W$. We
obtain three examples which share a common feature.
They are one-dimensional stacks of Deligne--Mumford type whose
coarse space is
$\PP^1$ and whose stabilizers are trivial apart from
three special points, whose orbifold structure has orders
$(k_1,k_2,k_3)=(3,3,3), (2,4,4),$ and $(2,3,6)$.
We refer to these orbifolds as Calabi--Yau orbifold $\PP^1$,
where the terminology ``Calabi--Yau'' is justified by the fact that,
although
$\omega$ is not trivial, it becomes trivial after
taking a suitable
tensor power $\omega^{\otimes r}$.

The three cases arise precisely for $W$ equal to
the following polynomials
\begin{align*}
P^{\vee}_8=x_1^3+x^3_2+x^3_3  &&(\emph{i.e.} \ \ (k_1,k_2,k_3)=(3,3,3)\ ),\\
X^{\vee}_9=  x^2_1+x_1x^2_2+x^4_3 &&(\emph{i.e.} \ \ (k_1,k_2,k_3)=(2,4,4)\ ),\\
J^{\vee}_{10}=x^2_1x_2+x^3_2+x^3_3 &&(\emph{i.e.}\  \ (k_1,k_2,k_3)=(2,3,6)\ ).
\end{align*}
Clearly, one should not confuse these orbifolds with
weighted projective stacks. To this effect we adopt the notation
$\PP^1[1/{k_1},1/{k_2},1/{k_3}]$ (instead
of $\PP(w_1,\dots,w_N)$).
The corresponding LG mirrors are the famous simple elliptic singularities
$P_8,X_9,$ and $J_{10}$.

\begin{rem}
It is a natural question to classify all CY orbifold $\PP^1$, \emph{i.e.} 
one-dimensional orbifolds with nontrivial stabilizers only over a
finite number of points and whose canonical line bundle satisfies $\omega^{\otimes r}\cong \Ocal$
for some integer $r$. It turns out that
there are only four possibilities. The
above three cases, alongside with  $\PP^1[\frac12,\frac12,\frac12,\frac12]$.
This stack cannot be expressed as $[X_W/\wt{G}]$ for $G=\Aut(W)$,
but rather as the quotient
of an index two subgroup of $\Aut(W)$.
Again, this requires solving
Problem \ref{prob:orbifolding}.
\end{rem}

As we mentioned previously, the key observation is that the
primitive form and Frobenius structure are determined by a choice of
symplectic basis $\alpha, \beta$ of $H_1$ of the corresponding
elliptic curve. The parameter $a$ together with a symplectic
basis determines
a point $\tau\in \mathbb H_+$ in the upper half-plane. The space
of parameter $a$ can be viewed as the quotient of $\mathbb H_+$
by the monodromy group $\Gamma$.

Saito's Frobenius manifold structure defines the genus-zero
potential function $\Fcal_0(\tau)$. In this situation, Givental
has defined a higher genus generating function $\Fcal_g(\tau)$.
By  studying the transformation of $\Fcal_g$ under
$\tau\mapsto g\tau$ for $g\in \Gamma$, the second author obtains, in collaboration with Milanov, the following theorem.
\begin{thm}\label{thm:Bmodel}
For the miniversal deformation of simple elliptic singularities,
the Saito--Givental function $\Fcal_g$ transforms as  a quasimodular
form of a finite index subgroup of $SL_2( \ZZ)$.
\end{thm}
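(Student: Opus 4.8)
The plan is to establish the quasimodularity of $\Fcal_g$ by combining three ingredients: an explicit model for the Frobenius manifold of a simple elliptic singularity, Givental's higher-genus reconstruction from the semisimple locus, and a careful analysis of the monodromy action on the period domain. First I would fix, for each of $P_8$, $X_9$, $J_{10}$, the one-dimensional space of marginal (degree-one) deformations and recall that the corresponding primitive form of K.~Saito depends on a choice of symplectic basis $\alpha,\beta$ of $H_1$ of the associated elliptic curve $X_W$; this choice produces a period ratio $\tau\in\mathbb H_+$, and the marginal parameter $a$ is then a modular function of $\tau$ for the monodromy group $\Gamma\subset SL_2(\ZZ)$ (one reads off $\Gamma$ as the subgroup preserving the relevant level structure, of index dividing $24$ in each case). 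The genus-zero potential $\Fcal_0(\tau)$ coming from Saito's theory can be written explicitly in terms of Eisenstein series, so it is manifestly quasimodular of weight zero in the appropriate sense; this is the base case.

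Next I would invoke Givental's formula for the higher-genus potential of a semisimple Frobenius manifold: since a simple elliptic singularity deforms to Morse singularities, the Frobenius manifold is generically semisimple, and $\Fcal_g$ for $g\ge 1$ is obtained from $\Fcal_0$ by applying the Givental quantized operator built out of the canonical coordinates, the Hessian of $\Fcal_0$, and the $R$-matrix solving the relevant ODE. The key structural point is that all the building blocks of this formula — the canonical coordinates $u_i$, the normalized idempotents, and the entries of the $R$-matrix — are, after the change of variables $a\leftrightarrow\tau$, algebraic expressions in $\tau$, the Eisenstein series $E_2,E_4,E_6$, and derivatives thereof. Because the ring of quasimodular forms $\CC[E_2,E_4,E_6]$ is closed under the derivation $q\,d/dq$, each application of $\partial_\tau$ (which intervenes through the dilaton/string-type shifts and through differentiating $\Fcal_0$) preserves quasimodularity. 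Organizing this as an induction on the genus $g$, with the Givental formula expressing $\Fcal_g$ as a finite sum of graph contributions each of which is a product of quasimodular building blocks, yields that $\Fcal_g$ is quasimodular for $\Gamma$.

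To make the bookkeeping precise I would set up weights carefully: assign to $E_2$ weight $2$, to $\partial_\tau$ weight $+2$, and to the period $\varpi$ (the normalizing factor in the primitive form) its natural weight, and then check that every vertex and edge factor in Givental's graph sum carries a consistent weight so that $\Fcal_g$ has a definite weight as a quasimodular form. The transformation law $\Fcal_g(g\tau) = (\text{automorphy factor})\,\Fcal_g(\tau)$ for $g\in\Gamma$ then follows by tracking how the primitive form, and hence the canonical coordinates and the $R$-matrix, transform under a change of symplectic basis — this is exactly the statement in the remark preceding the theorem that the Frobenius structure depends on the choice of cycles. The main obstacle I expect is precisely the control of the $R$-matrix: one must show not only that its entries lie in the quasimodular ring but also that the particular normalization dictated by the homogeneity/Euler grading of the singularity (the ``calibration'' of Givental's formula) is compatible with the modular weight grading, i.e.\ that there is no anomalous $\tau$-dependence hiding in the choice of fundamental solution of the associated differential equation. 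Establishing this compatibility — likely by exploiting the scaling symmetry of the simple elliptic singularity together with the fact that the discriminant of the Weierstrass cubic is a modular form of weight $12$ — is the technical heart of the argument; once it is in place, the induction closes and the finite index of $\Gamma$ in $SL_2(\ZZ)$ gives the stated conclusion.
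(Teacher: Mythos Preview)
The paper does not supply a proof of this theorem; it attributes the result to Milanov--Ruan \cite{MR} and gives only a one-line hint: the argument proceeds ``by studying the transformation of $\Fcal_g$ under $\tau\mapsto g\tau$ for $g\in\Gamma$'', and the remark immediately following the statement stresses that ``the above theorem involves quantization, even if it is not obvious from the statement'' and that ``the symplectic transformation is related to the modular transformation.'' From this one infers that the intended argument is top-down: a modular transformation is a change of symplectic basis of $H_1$ of the elliptic curve, hence a change of primitive form; this induces a symplectic isomorphism of Givental's loop space $\Vcal$, and Givental's formalism then asserts that the corresponding total descendant potentials differ by the \emph{quantized} operator $\widehat{U}$. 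Unwinding what $\widehat{U}$ does to each $\Fcal_g$ yields the quasimodular transformation law directly, without ever needing to know that the individual ingredients (canonical coordinates, $R$-matrix entries) lie in the ring of quasimodular forms.

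Your proposal is a genuinely different, bottom-up route: show that every piece of Givental's graph expansion lives in $\CC[E_2,E_4,E_6]$ and that the operations involved (differentiation in $\tau$, products) preserve this ring, then conclude by inspection of the graph sum. This is a reasonable strategy and, if it goes through, it yields something a bit sharper (an explicit placement of $\Fcal_g$ in the quasimodular ring rather than just a transformation law). But you have correctly located the obstruction: controlling the $R$-matrix. The $R$-matrix is determined by an ODE together with a normalization (calibration) condition, and showing that the solution with the \emph{correct} normalization has quasimodular entries is not at all obvious --- the homogeneity/Euler-field constraint you invoke fixes $R$ only up to certain ambiguities, and verifying that these ambiguities are compatible with the modular weight grading is the whole difficulty. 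The Milanov--Ruan quantization argument sidesteps this entirely: it never computes $R$; it only needs to know that two choices of primitive form are related by a symplectic transformation and then invokes the functoriality of Givental's higher-genus reconstruction under quantization. So your plan is plausible but leaves the hard step open, whereas the approach indicated by the paper trades that analytic control for a structural statement about quantized symplectic transformations.
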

Recall that the second part of the LG-CY correspondence conjecture
involves the quantization of a
symplectic transformation. The above theorem involves quantization, even if
it is not obvious
from the statement.
Moreover, it provides a much stronger statement; namely,
the claim that the symplectic transformation is related to the modular transformation.

The second theorem fulfilling mirror symmetry
is the following.
\begin{thm}[Krawitz--Shen \cite{KSh}]\label{thm:MSAut}
Both the ${\rm CY}{{\mid}}\reflectbox{\rm LG}$  and the ${\rm LG}{{\mid}}\reflectbox{\rm LG}$ mirror symmetry conjectures hold for all
genera for simple elliptic singularities.
\end{thm}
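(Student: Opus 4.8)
The plan is to prove Theorem \ref{thm:MSAut} by fixing, once and for all, an explicit identification between the FJRW state space on the LG side, the Chen--Ruan cohomology on the CY side, and the Jacobian/Milnor ring of the mirror simple elliptic singularity, and then to propagate this identification through the respective Frobenius manifold structures. Concretely, for each of the three cases $W=P_8^{\vee},X_9^{\vee},J_{10}^{\vee}$, one first writes down explicitly Krawitz's isomorphism of Theorem \ref{thm:Kra}, which identifies $\Hcal_{W,\Aut(W)}$ with $\Qcal_{W^{\vee},\{1\}}=\Qcal_{W^{\vee}}$; since $\Aut(W)^{\vee}=\{1\}$, the $B$ side is the \emph{unorbifolded} Saito--Givental theory of the simple elliptic singularity $W^{\vee}\in\{P_8,X_9,J_{10}\}$, for which the Frobenius manifold and (by Givental's formalism on the semisimple locus) the higher genus potentials are rigorously defined. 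Dually, on the Gromov--Witten side one identifies $H_{\CR}^{*}([X_W/\wt{\Aut(W)}];\CC)$ with the state space of the elliptic orbifold $\PP^1[1/k_1,1/k_2,1/k_3]$ via Theorem \ref{thm:LGCYstates} (the cohomological LG-CY correspondence, which applies here because $W$ is of Calabi--Yau type even though $G=\Aut(W)$ is not). This reduces the theorem to two separate mirror statements, one at the Gepner point $W^{\vee}_0=W^{\vee}$ and one at the large complex structure point $W^{\vee}_{\infty}=\prod_i x_i$, matching Saito--Givental genus-$g$ potentials with $\Fcal^g_{\RW}$ and $\Fcal^g_{\GW}$ respectively.

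Next I would establish the genus-zero comparisons. On the $\RW$ side, one computes the genus-zero FJRW invariants of $(W,\Aut(W))$ directly: for these rank-three (or chain/Fermat) polynomials the relevant moduli spaces $W(\mathbf h_1,\dots,\mathbf h_n)_{0,n,\Aut(W)}$ are low-dimensional, the markings can be arranged to be narrow, and one uses the concavity case of \S\ref{subsect:virt} together with the Grothendieck--Riemann--Roch formula of Proposition \ref{pro:GRR} to extract the $J_{\RW}$-function; one then checks it agrees, under Krawitz's map, with the restriction of Saito's flat coordinates/primitive form expansion of $W^{\vee}$ near the origin. On the $\GW$ side, the genus-zero Gromov--Witten theory of the three elliptic orbifold $\PP^1$'s is by now known (this is where Krawitz--Shen's explicit computation enters), and one matches it with the Saito--Givental theory expanded around $\prod_i x_i$; the mirror map here is the classical one sending the orbifold Kähler parameter to the modular parameter $\tau$. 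At this stage one also verifies that both expansions solve the same system (topological recursion relations, string and dilaton equations), so that the genus-zero potential is determined by the $J$-function and the matchings propagate to all of $\Lcal_{\W}$.

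The passage to higher genus is where Givental's formalism and Theorem \ref{thm:Bmodel} do the real work: since Saito's Frobenius manifold for a simple elliptic singularity is generically semisimple, the higher genus potential $\Fcal^g_{\mathrm{formal}}$ is canonically defined by Givental's quantization formula, and the \emph{same} canonical construction applies to both the FJRW and GW cohomological field theories once one knows they are semisimple after the appropriate deformation (which holds because the state space has a semisimple generic point). Then Teleman's classification of semisimple CohFTs — or, more self-containedly here, the uniqueness of the Givental--Teleman reconstruction — upgrades each genus-zero matching to a matching in all genera, provided the $R$-matrices agree; agreement of the $R$-matrices follows from agreement of the genus-zero data plus the computation of the monodromy/quantized symplectic transformation relating the two expansion points, which is precisely the content of Theorem \ref{thm:Bmodel} (the quasimodularity statement encodes exactly that the analytic continuation from $W^{\vee}_0$ to $W^{\vee}_{\infty}$ is realized by a quantized symplectic transformation in $SL_2(\ZZ)$). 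Finally, Conjecture \ref{conj:Bmodel} — that the two Saito--Givental expansions are related by analytic continuation and symplectic transformation — is verified by this same monodromy analysis, closing the triangle.

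The hard part will be the global/analytic-continuation step: matching the two \emph{local} Frobenius manifold germs (at the Gepner point and at the large complex structure point) requires a genuine control of the Picard--Fuchs system for the period integrals of the elliptic curve family over the modular curve $\mathbb H_+/\Gamma$, together with a careful choice of primitive form at each limit so that the normalizations are compatible; this is delicate because, as noted in the remark after Conjecture \ref{conj:Bmodel}, Saito's construction depends on the choice of primitive form, and the "obvious" choices at the two points differ by a nontrivial (and $\tau$-dependent) rescaling of the form \eqref{eq:rescaling}. Controlling this rescaling — and showing it is exactly absorbed by the quantized symplectic transformation of Theorem \ref{thm:Bmodel} — is the crux; the rest reduces to finite, if lengthy, explicit computations case by case for $(3,3,3)$, $(2,4,4)$, and $(2,3,6)$.
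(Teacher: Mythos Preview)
The paper does not prove this theorem; it is quoted from Krawitz--Shen \cite{KSh} without argument, so there is no in-paper proof to compare against.

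On the substance of your outline: Theorem~\ref{thm:MSAut} consists of two purely \emph{local} matchings---FJRW with Saito--Givental at the Gepner point, and GW with Saito--Givental at the large complex structure point. Analytic continuation between the two points and Theorem~\ref{thm:Bmodel} are not ingredients; in the paper's logic they are combined with Theorem~\ref{thm:MSAut} \emph{afterwards} to obtain Corollaries~\ref{cor:CYorbP1} and~\ref{cor:modularity}. So the ``hard part'' you identify (controlling the primitive-form rescaling across the two limits via the monodromy of Theorem~\ref{thm:Bmodel}) is not part of what Theorem~\ref{thm:MSAut} asserts, and building your argument around it mis-sequences the dependencies.

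For the two local statements themselves, your genus-zero-plus-reconstruction strategy is the right shape, but the Krawitz--Shen argument is more hands-on than an appeal to Givental--Teleman: because the target has (complex) dimension one, tautological relations (WDVV, Getzler, and higher-genus reduction in the spirit of Faber--Shadrin--Zvonkine) reduce all correlators to a finite list of basic genus-zero invariants, which are then checked explicitly against the Saito side case by case. Finally, your claim that ``markings can be arranged to be narrow'' is not safe for the non-Fermat polynomials $X_9^{\vee}$ and $J_{10}^{\vee}$; cf.\ Remark~\ref{rem:fixed_vs_Autinv}. Broad contributions have to be handled, and concavity alone does not carry the computation there.
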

Theorems \ref{thm:Bmodel} and \ref{thm:MSAut} imply the following corollaries.
\begin{cor}\label{cor:CYorbP1}
The LG-CY correspondence holds for all genera for the CY
orbifold $\PP^1$ of weights $(3,3,3), (2,4,4), (2,3,6)$.
\end{cor}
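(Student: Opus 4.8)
The plan is to chain together the two $A$-to-$B$ comparisons of Theorem~\ref{thm:MSAut} through the purely $B$-model statement underlying Theorem~\ref{thm:Bmodel}. Fix one of the three pairs $(W,\Aut(W))$ from the list, so that $X_W$ is an elliptic curve, $[X_W/\wt{G}]$ is the corresponding Calabi--Yau orbifold $\PP^1$, and $W^\vee$ is the mirror simple elliptic singularity $P_8$, $X_9$ or $J_{10}$. On the $B$ side one has the Saito--Givental theory of the miniversal deformation $W^\vee_{a}$ in the global sense of \S\ref{subsect:CYP1}, with genus-$g$ potentials $\Fcal^g_{\rm formal}$ on the marginal-parameter space (the quotient of $\mathbb H_+$ by the monodromy group). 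Theorem~\ref{thm:MSAut} gives, after suitable mirror maps and rescalings of the primitive form, the identification of $\Fcal^g_{\GW}$ for $[X_W/\wt G]$ with $\Fcal^g_{\rm formal}$ near the large complex structure point $W^\vee_\infty=\prod_i x_i$, and of $\Fcal^g_{\RW}$ for $(W,\Aut(W))$ with $\Fcal^g_{\rm formal}$ near the Gepner point $W^\vee_0=W^\vee$; in genus zero these upgrade to identifications of the Lagrangian cones $\Lcal_{\GW}$ and $\Lcal_{\RW}$ with the Saito--Givental cone over neighborhoods of the two points.

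Next I would invoke the modularity of Theorem~\ref{thm:Bmodel}: the Saito--Givental functions $\Fcal_g$ are quasimodular forms for a finite-index subgroup $\Gamma\subset SL_2(\ZZ)$, and the transition between the expansion of the theory around $W^\vee_\infty$ and around $W^\vee_0$ is effected by the action of an element of $SL_2(\ZZ)$ on $\tau\in\mathbb H_+$, together with the rescaling of the primitive form dictated by Griffiths transversality. This is exactly the content of Conjecture~\ref{conj:Bmodel} in this case, so Saito--Givental theory at $W^\vee_0$ and at $W^\vee_\infty$ is related by analytic continuation along a path in the base and a symplectic transformation $\mathbb S$ on the Givental space, with the genus-$g$ potentials related by $\widehat{\mathbb S}$ (here one uses that, in Givental's formalism, the modular transformation of $\Fcal_g$ is the quantization of the corresponding symplectic operator). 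Composing the mirror maps of the previous step with $\mathbb S$ produces a symplectic isomorphism $\mathbb U_{\text{\rm LG-CY}}\colon \Vcal_{\RW}\to\Vcal_{\GW}$ carrying $\Lcal_{\RW}$ to $\Lcal_{\GW}$ after analytic continuation and, on quantizations, $\Dcal_{\GW}=\widehat{\mathbb U}_{\text{\rm LG-CY}}(\Dcal_{\RW})$ up to an overall constant. That is precisely Conjecture~\ref{conj:LGCY}(1) and (2) for these three targets.

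The identifications themselves are routine, since they are imported verbatim from Theorems~\ref{thm:MSAut} and~\ref{thm:Bmodel}; what needs genuine care is the bookkeeping that glues them. Concretely: (i) one must check that the two mirror maps and the modular transition are compatible, i.e.~that the rescalings of the primitive form used near $W^\vee_0$ and near $W^\vee_\infty$ in Theorem~\ref{thm:MSAut} are the ones for which Theorem~\ref{thm:Bmodel} asserts quasimodularity --- this is the subtlety flagged in the remark after Conjecture~\ref{conj:Bmodel} about the dependence of the Frobenius structure on the choice of primitive form; (ii) one must verify that the resulting $\mathbb U_{\text{\rm LG-CY}}$ is degree-preserving and $\CC[z,z^{-1}]$-valued as demanded in Conjecture~\ref{conj:LGCY}(1), which follows from degree reasons once the gradings on $\Hcal_{W,\Aut(W)}$ and $H_{\CR}([X_W/\wt G])$ are matched via Theorem~\ref{thm:LGCYstates}; and (iii) one must confirm that the ``quantization'' in Theorem~\ref{thm:Bmodel} is literally Givental's $\widehat{(\ \cdot\ )}$, so that part (2) drops out. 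None of these is deep, but (i) is the main obstacle, since it is where the global nature of the deformation (letting the marginal parameters tend to infinity) and the correct normalization of periods must be reconciled across the two special points.
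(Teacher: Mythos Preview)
Your proposal is correct and follows the same approach as the paper: the corollary is stated there as an immediate consequence of Theorems~\ref{thm:Bmodel} and~\ref{thm:MSAut}, with no further argument given beyond the remark that Theorem~\ref{thm:Bmodel} secretly involves quantization and identifies the symplectic transformation with the modular one. Your unpacking of how the two $A$-to-$B$ identifications are chained through the $B$-model analytic continuation is exactly the intended reading, and the three care points you flag (compatibility of primitive-form normalizations, degree-preservation, and identification of the modular action with Givental quantization) are the right places to look, though the paper absorbs all of them into the cited theorems rather than spelling them out.
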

\begin{cor}\label{cor:modularity}
The generating functions of GW theory for CY orbifold
$\PP^1$ of weights $(3,3,3), (2,4,4), (2,3,6)$ are quasimodular forms
for finite index subgroups of $SL_2( \ZZ)$.
\end{cor}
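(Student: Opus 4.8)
The plan is to read the statement off from Theorems \ref{thm:Bmodel} and \ref{thm:MSAut}, which between them contain all the substance; the only point that needs care is the compatibility of the mirror map with the modular parameter $\tau$. First I would fix one of the three weight systems $(k_1,k_2,k_3)\in\{(3,3,3),(2,4,4),(2,3,6)\}$ together with the associated simple elliptic singularity $W^{\vee}\in\{P_8,X_9,J_{10}\}$. By the ${\rm CY}{{\mid}}\reflectbox{\rm LG}$ part of Theorem \ref{thm:MSAut} there is a mirror map, defined on a neighbourhood of the large complex structure point $W^{\vee}_{\infty}=\prod_i x_i$, under which the genus-$g$ Gromov--Witten potential $\Fcal^g_{\GW}$ of $\PP^1[1/k_1,1/k_2,1/k_3]$ is identified with the genus-$g$ Saito--Givental formal potential $\Fcal^g_{\rm formal}$ of the miniversal deformation of $W^{\vee}$. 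By Theorem \ref{thm:Bmodel}, $\Fcal^g_{\rm formal}$, viewed as a function of the period coordinate $\tau\in\mathbb H_+$ cut out by a symplectic basis $\alpha,\beta$ of $H_1$ of the fibre elliptic curve, is a quasimodular form for the monodromy group $\Gamma$, which is a finite-index subgroup of $SL_2(\ZZ)$.

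It then remains to match coordinates. The marginal parameter $a$ near $W^{\vee}_\infty$ is, up to the rescaling \eqref{eq:rescaling} of the primitive form, a coordinate on $\mathbb H_+/\Gamma$, and the mirror map expresses the Gromov--Witten flat coordinate $t\in H^{1,1}_{\CR}([X_W/\wt{G}])$ as the modular parameter $\tau$ (plus the standard classical terms), so that the K\"ahler/Novikov variable is $q=e^{2\pi\cxi\tau}$ and the cusp $\tau\to\cxi\infty$ corresponds to the origin of $H^{1,1}$. Hence $\Fcal^g_{\GW}$, re-expanded in $\tau$ through $q=e^{2\pi\cxi\tau}$, literally coincides with $\Fcal^g_{\rm formal}(\tau)$, and the latter is quasimodular for $\Gamma$ by Theorem \ref{thm:Bmodel}. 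Running this over the three singularities $P_8,X_9,J_{10}$ produces the orbifold $\PP^1$'s of weights $(3,3,3),(2,4,4),(2,3,6)$ and finishes the proof; equivalently, this is Corollary \ref{cor:CYorbP1} combined with Theorem \ref{thm:Bmodel}.

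The step I expect to be the crux is precisely this identification of canonical coordinates. One must check that the primitive form chosen on the $B$ side --- equivalently the choice of middle-dimensional cycles / period integrals --- is normalised so that the induced flat coordinate near $W^{\vee}_\infty$ is the modular parameter $\tau$ appearing in Theorem \ref{thm:Bmodel}, and that under the mirror map of Theorem \ref{thm:MSAut} this coordinate is carried to the Gromov--Witten divisor variable up to classical terms only. As the remark following Conjecture \ref{conj:Bmodel} stresses, the dependence of Saito's Frobenius structure on the choice of primitive form is a genuinely delicate issue; once that normalisation is pinned down, however, the remainder is bookkeeping and the quasimodularity of $\Fcal^g_{\GW}$ follows by pullback along the mirror map.
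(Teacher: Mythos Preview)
Your proposal is correct and follows exactly the paper's approach: the paper simply declares that Theorems \ref{thm:Bmodel} and \ref{thm:MSAut} together imply Corollary \ref{cor:modularity}, with no further argument. Your write-up supplies the details the paper leaves implicit, and your concern about matching the flat coordinate with the modular parameter $\tau$ is a legitimate one, but the paper absorbs this into the statement of Theorem \ref{thm:MSAut} (the mirror map is asserted to identify the potentials, so whatever normalisation of primitive form and period coordinate is needed is part of that theorem's content rather than of the corollary's proof).
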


\begin{rem}
The first corollary provides the first example of the LG-CY correspondence for
all genera.
\end{rem}

The most interesting application is probably the modularity
of the GW theory of orbifold $\PP^1$. As we mentioned in the introduction,
a major problem in geometry and physics is to compute Gromov--Witten
theory. To do so, we often assemble the numerical
Gromov--Witten invariants into a generating function $\Fcal_g$, where
$g$ represents the genus. In some extremely fortunate situations,
$\Fcal_g$ or, more precisely,
the total descendant
potential $\Dcal=\sum_{g\geq 0}h^{g-1}\Fcal_g$
provides a solution to classical integrable systems.
This is the case when the target is a point, by Kontsevich--Witten,
or weighted $\PP^1$ by (work of Okounkov--Pandharipande, Milanov--Tseng, and Johnson).
It is also striking that Okounkov-Pandharipande showed that
$\Fcal_g$ for the elliptic curve $E$ is a quasimodular form of
$SL_2(\ZZ)$. In this way the study of the LG-CY correspondence,
yields another  class of examples: the CY
orbifold $\PP^1$ of weights $(3,3,3), (2,4,4), (2,3,6)$.
In many ways, this is
 much harder to prove because the Chen--Ruan orbifold cohomology
of these examples has more generators
than that of elliptic curves and the Gromov-Witten
invariants are, by definition, more complicated. It would be extremely
interesting to investigate this phenomenon in higher dimension.

\subsection{LG-CY correspondence shortcircuiting mirror symmetry} \label{subsect:CIR}
The genus-zero LG-CY correspondence has not been proven as a tool
to further understand the genus-zero GW theory. The latter was completely elucidated
by  Givental and Lian--Liu--Yau and does not need to be computed.
Instead, we want to use genus-zero information to determine
the symplectic transformation ${\mathbb U}_{\text{\rm LG-CY}}$. Via quantization, ${\mathbb U}_{\text{\rm LG-CY}}$
is expected  to compute $\GW$ theory in higher genus once
we know the higher genus $\RW$ theory. For this
purpose, we need to carry out analytic
continuation via the Mellin--Barnes
method: this yields the desired symplectomorphism as illustrated in Remark \ref{rem:strategy}.
There is an alternative method that allows us to write down the
symplectomorphism $\mathbb U_{\text{\rm LG-CY}}$ directly without passing through the
$B$ model of mirror symmetry and analytic continuation.
As a byproduct, this operation provides an explanation of
the fact that $\UU_{\text{\rm LG-CY}}$ is symplectic.

\subsubsection{Witten's GIT geometric setup}
The construction follows the
purely mathematical description of the LG-CY correspondence, given by Witten in \cite{Wi93b}.
As we recalled (in the homogeneous case) in the introduction there are two GIT quotients for
$\CC^*$ operating on $\CC^N\times \CC$ with weights $(w_1,\dots,w_N,-d)$.
\begin{enumerate}
\item  One of them is the quotient of the open subscheme $(\CC^N\setminus\{\pmb 0\})\times \CC$
yielding the total space of $\Ocal(-N)$ over the weighted projective stack $\PP(\pmb w)$.
This is often referred in the literature as a CY construction
as soon as the sum of the weights is zero. Indeed, consider the complex function
$\ol W=p\textsum_{j=1}^N x^{d/w_j}$ defined in coordinates
$x_1,\dots,x_N,$ and $p$ over $\CC^N\times \CC$ (for simplicity, we are assuming that $d$ is a multiple of $w_j$).
Then, $\ol W$ is $\CC^*$-invariant and descends to the quotient $\Ocal(-N)$.
There, if we consider the map $\ol W$ as a fibration on
$\CC$ we notice that only the special fibre is singular, precisely along the CY hypersurface $X_W$.
\item The second GIT quotient is the quotient of the open subscheme
$\CC^N\times \CC^*$ yielding the stack $[\CC^N/\langle j_W\rangle]$. If we consider
the above map $\ol W$ as a fibration over $\CC$,
we get the LG singularity model $W\colon [\CC^N/\langle j_W\rangle]\to \CC$
for $W=\sum_{j=1}^N x_j^{d/w_j}$
with its isolated singularity in the special fibre and no critical points elsewhere.
\end{enumerate}
From this perspective both sides of the correspondence arise from the same
$\CC^*$-invariant morphism $\ol W$ and from the same geometric setup
\begin{equation}\label{eq:hat}
\ol W\colon [U/\CC^*]\longrightarrow \CC \qquad \qquad (\text{for } U=\CC^N\times \CC).                                                                                         \end{equation}

\subsubsection{Matrix factorizations and Orlov's equivalence}
As illustrated in \cite{HHP} we can exploit the above geometry to present 
the equivalence between the bounded derived
category $\Dcal^b(X_W)$ of coherent sheaves on $X_W$ and
the triangulated category of
graded matrix factorization $MF^{\rm gr}(W)$ of
$W\colon \CC^N\to \CC$ (this follows from Orlov theorem \cite{Orlov} we 
refer to  Isik 
\cite{Isik}  for a complete treatment).
We recall that a \emph{matrix factorization} of $W$ is a pair
$$(E,\delta_E)=\Big(E^0 \overset{\xleftarrow{\ \delta_1\ }}{\underset{\xrightarrow[\ \delta_0\ ]{}}{  }}E^1\Big),$$
where $E=E^0\oplus E^1$ is a $\ZZ_2$-graded finitely generated free module over $R=\CC[x_1,\dots,x_N]$, and
$\delta_E\in {\rm End}^1_R(E)$ is a degree $1\in \ZZ_2$ endomorphism of $E$,
  such that $\delta^2=W\cdot \id_E$.
There is a natural $\ZZ$-graded version, which gives rise to the triangulated category
$MF^{\rm gr}(W)$ of matrix factorizations.

In \cite{PV_MF},
Polishchuk and Vaintrob have shown how to apply the Chern character formalism for
differential graded categories in general to the special case of $\ZZ_d$-equivariant
matrix factorization. Via this construction, and the natural functor
mapping $\ZZ$-graded matrix factorization to $\ZZ_d$-equivariant ones, we get
the Chern character
$$\ch\colon K(MF^{\ZZ_d}(W))\to HH(MF^{\ZZ_d}(W)),$$
where $HH$ stands for the Hochschild cohomology applied to
the differential graded
category of $\ZZ_d$-equivariant matrix factorizations.
In fact, in \cite{PV_MF} a natural isomorphism involving the $\RW$ state space
$$HH(MF^{\ZZ_d}(W))\cong \Hcal_{W,\langle j_W\rangle}$$
is shown. In this way,
Orlov's equivalence
$$MF^{\rm gr}(W)\xrightarrow{\ \ \sim\ \ } \Dcal^b(X_W)$$
yields, after passage to $K$ theory and via Serre duality,
an isomorphism between the state spaces of $\GW$ theory
of $X_W$ and the state space of $\RW$ theory of $W,\Jnw.$
The cohomological version 
of Orlov's equivalence preserves the 
Euler pairings $\chi(E,F) :=\sum_{i\in \ZZ} \dim \Hom(E,F[i])$ after multiplication on both sides by the 
Gamma class $\wt \Gamma$. We refer to \cite{Ir} and \cite{CIR} for precise 
definitions of the Gamma class
 for the  LG model and for the CY hypersurface $X_W$; 
the compatibility with the Euler pairings 
is guaranteed by the following relation with the Todd character: 
$((-1)^{\frac{\deg}2}\wt \Gamma_{X_W})\cdot\wt \Gamma_{X_W}=(2\pi i)^{\deg} \td_{X_W}$
(a consequence of $\Gamma(1 - z)\Gamma(1 + z) = \pi z/ \sin(\pi z)$).
We finally obtain 
\begin{equation}\label{eq:Orlovstates}
\Phi_{\rm Orlov}\colon \Hcal_{W,\Jnw}\longrightarrow H_{\CR}^*(X_W)
\end{equation}
respecting the Euler pairings on both sides.
We point out that this isomorphism does not respect the bigrading
defined in Section \ref{sect:classMS}.

\subsubsection{Short-circuiting mirror symmetry}
For simplicity, and in order to connect to the discussion of the introduction,
let us focus on the case of the quintic three-fold and refer to
\cite{CIR} for the generalizations to weighted homogeneous polynomials.

Recall that solving $\GW$ and $\RW$ theory
amounts to writing a basis of flat sections of
a certain vector bundle with connection. Namely,
once the state space of the theory is specified $H_{\W}$,
we consider the trivial vector bundle $D_{\W}=H_{\W}\times \Acal\longrightarrow \Acal$,
where $\Acal$ is a contractible neighbourhood
of $H_{\W}^{1,1}$.
This vector bundle is equipped with Dubrovin's connection $\nabla_{\W}$ and its fibres are the even-degree parts of
the state spaces of the relevant theory:
 $H_{\CR}^*(X_W)$ for $\GW$ theory and $\Hcal_{W,\Jnw}$ for $\RW$ theory.
Solving each theory in genus zero amounts to define morphisms
\begin{equation*}
\xymatrix@R=.8cm{
\Gamma(D_{\RW},\nabla_{\RW})&& \Gamma(D_{\GW},\nabla_{\GW})\\
H_{\RW}\ar[u]&& H_{\GW}\ar[u]
}
\end{equation*}
identifying $H_{\W}$ with the space of flat sections $\Gamma(D_{\W},\nabla_{\W})$.
The proofs of the
mirror symmetry conjectures \ref{conj:MSCYtoCY} (Givental \cite{Givental},
Lian--Liu--Yau \cite{LLY}) and \ref{conj:MSCYtoLG} (by the authors \cite{ChRu})
yield an identification of $(D_{\W},\nabla_{\W})$ with two local pictures
of the $B$ model vector bundle $(V,\nabla_V)$. Via analytic continuation this yields
an identification between $\Gamma(D_{\RW},\nabla_{\RW})$ and
$\Gamma(D_{\GW},\nabla_{\GW})$. More precisely, following Figure \ref{fig:picture},
we can identify both spaces of flat sections to germs of flat sections of $(V,\nabla_V)$
around $0$ and $\infty$ and carry out a parallel transport there (this is well defined
in terms of multivalued functions, or in terms of a single-valued function
once branch cut is chosen, see Remark \ref{rem:monod}).
In \cite{CIR} we prove, in collaboration with Iritani, that analytic continuation can be
equivalently replaced by Orlov's isomorphism \eqref{eq:Orlovstates}.
\begin{thm}
The diagram
\begin{equation*}
\xymatrix@R=.2cm{
\Gamma(D_{\RW},\nabla_{\RW})\ar[rrrr]^{\rm analytic\ continuation}&&&& \Gamma(D_{\GW},\nabla_{\GW})\\
&& \square&& \\
H_{\RW}\ar[uu]\ar[rrrr]_{\Phi_{\rm Orlov}}&&&& H_{\GW}\ar[uu]
}
\end{equation*}
is commutative. In this way the linear transformation  $\UU_{\text{\rm LG-CY}}$
matching the bases of
flat sections is encoded by a symplectic matrix
expressing $\Phi_{\rm Orlov}$ for a given choice of bases of the two state spaces.
\end{thm}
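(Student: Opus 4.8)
The plan is to compute both composites in the square explicitly on a spanning set and check that they agree; the conceptual point is that the two vertical arrows equip $\Hcal_{W,\Jnw}$ and $H_{\CR}^*(X_W)$ with \emph{the same integral structure}, one preserved by $\nabla$ and hence by analytic continuation along $(\PP^1)^\times$. First I would reduce to the $J$-function: since $\Lcal_{\GW}$ and $\Lcal_{\RW}$ are reconstructed from their respective $J$-functions by the string and divisor equations, the spaces of flat sections $\Gamma(D_{\W},\nabla_{\W})$ over the contractible base $\Acal\supset H_{\W}^{1,1}$ are generated over $\Ocal(\Acal)$ by the components of the fundamental solution, i.e. by derivatives of the $I$-functions. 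On the $\GW$ side this is the hypergeometric $I$-function of the quintic (Givental \cite{Givental}, Lian--Liu--Yau \cite{LLY}); on the $\RW$ side it is the FJRW $I$-function computed in \cite{ChRu}. Both are fundamental solutions of the same rank-four Picard--Fuchs operator on $(\PP^1)^\times$ (the $A$-hypergeometric system of the quintic), written in the large-complex-structure chart near $\infty$ and in the Gepner chart near $0$ respectively.

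Next I would carry out the analytic continuation by the Mellin--Barnes method: integrating the quintic hypergeometric series against a suitable ratio of $\Gamma$-factors and summing residues continues the large-volume frame at $\infty$ to $0$, producing $\UU_{\text{\rm LG-CY}}$ as an explicit matrix whose entries are assembled from values of the $\Gamma$-function at fifth-integer arguments and from fifth roots of unity; this is the computation already performed in \cite{ChRu}, so the composite ``left vertical map followed by analytic continuation'' is available in closed form. In parallel I would compute $\Phi_{\rm Orlov}$ on the \emph{same} frames: equip both state spaces with their $\wt\Gamma$-twisted Chern characters --- on the $\GW$ side the $\wt\Gamma$-integral structure on quantum cohomology, with flat sections indexed by the line bundles $\Ocal_{X_W}(j)$; on the $\RW$ side the Polishchuk--Vaintrob Chern character on $K(MF^{\ZZ_d}(W))$ \cite{PV_MF} twisted by the LG $\wt\Gamma$-class, with flat sections indexed by the graded matrix factorizations that Orlov's equivalence $MF^{\rm gr}(W)\to\Dcal^b(X_W)$ sends to the $\Ocal_{X_W}(j)$. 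Because that equivalence is compatible with the Euler pairings after multiplication by $\wt\Gamma$ on both sides --- this is where the relation $((-1)^{\frac{\deg}2}\wt\Gamma_{X_W})\cdot\wt\Gamma_{X_W}=(2\pi i)^{\deg}\td_{X_W}$ enters --- the induced map on the $\wt\Gamma$-twisted cohomologies is again expressed through $\Gamma$-values at fifth-integers and roots of unity.

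The final step is the comparison. One matches the Mellin--Barnes residue expansion of the quintic periods with the $\wt\Gamma$-twisted Chern characters of the Orlov images: the residue at a pole of order $k$ contributes a polynomial in $\log$ whose coefficients are exactly the components of $\wt\Gamma_{X_W}\cup\ch(\Ocal_{X_W}(j))$, while the overall $\Gamma$-factor used in the Mellin--Barnes kernel reproduces the LG $\wt\Gamma$-class on the FJRW side; hence the two descriptions of the transition matrix coincide and the square commutes. Since the statement has already been reduced to $H^{1,1}$ and to the five generators $\Ocal_{X_W},\dots,\Ocal_{X_W}(-4)$, this is ultimately a finite check. I expect the main obstacle to be precisely this last bookkeeping: tracking the grading shifts, the powers of $2\pi i$ and of $z$, the Serre-duality twist and the (un)graded structure --- recall that $\Phi_{\rm Orlov}$ does \emph{not} respect the bigrading of Section \ref{sect:classMS} --- so that each FJRW frame vector is matched with the correct Orlov image on the nose. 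Once the dictionary between the analytic ($\Gamma$-function) data and the categorical ($\wt\Gamma$-class) data is pinned down, commutativity of the diagram, and hence the identification of $\UU_{\text{\rm LG-CY}}$ with the matrix of $\Phi_{\rm Orlov}$, follows as in Remark \ref{rem:strategy} and Figure \ref{fig:picture}.
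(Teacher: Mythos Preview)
Your proposal is correct and follows essentially the same strategy as the paper, which does not give a self-contained proof here but refers to \cite{CIR}: compute the FJRW and GW $I$-functions as solutions of the same Picard--Fuchs system, carry out the Mellin--Barnes analytic continuation as in \cite{ChRu}, and identify the resulting transition matrix with the $\wt\Gamma$-twisted cohomological realization of Orlov's equivalence via the Polishchuk--Vaintrob Chern character and Iritani's integral structure \cite{Ir}. Your emphasis on the bookkeeping of grading shifts, powers of $2\pi i$, and the Serre-duality twist is well placed---this is exactly where the work lies in \cite{CIR}, and your identification of the frames $\Ocal_{X_W}(j)$ and their matrix-factorization preimages as the spanning set on which to check commutativity is the right reduction.
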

\begin{rem}\label{rem:HHP}
The above statement may be regarded as saying that Orlov's categorical
equivalence \emph{mirrors} on the $A$-model
the analytic continuation carried out
by means of the $B$-model picture $(V,\nabla_V)$. This
fits in Iritani's framework developped in \cite{Ir}
describing the integral structures mirroring the
local systems $H^{3}(X_{W,t}^{\vee} , \ZZ) \subset H^{3}(X_{W,t}^{\vee} , \CC)$ of the $B$ models.

In physics, this counterpart to parallel transport has been widely treated.
Hori, Herbst, and Page rephrase Orlov's equivalence in terms of
\emph{brane transport}, see \cite{HHP}.
One of the most
interesting aspects of their work is the above mentioned reformulation of Orlov's functor
passing through the geometric setup \eqref{eq:hat}.
There, we are led to extend
representations of the cyclic group $\Jnw$ to representations of $\CC^*$; clearly,
there is not a unique way to do so and this is the reason why
Orlov's equivalence should be actually regarded as a set of functors
$$MF^{\rm gr}(W)\xrightarrow{\ \sim \ } \Dcal^b(X_W)\qquad
\text{yielding} \qquad \Phi_{a}\colon H_{\RW}\longrightarrow H_{\GW}$$
parametrized by $a\in \ZZ$ (see \cite[\S2.2]{Orlov}).
Any two of these functors match for
a suitable autoequivalence of
the source category. We treat this aspect completely in \cite{CIR}.
\end{rem}

\begin{rem}\label{rem:monod}
In complete analogy, the analytic continuation should be carried along an open substack
of the one dimensional stack $[\PP^1/\ZZ_d]$; this happens because over the conifold point and over the
large complex structure point, the fibres $X_{W,\pmb a}^\vee$ are singular and over the
Gepner point there is a nontrivial stabilizer.
In this way, the analytic continuation of period integrals is defined
up to the monodromy operator $T$ at infinity (Remark \ref{rem:monod}).
The theorem above should be more precisely stated as follows:
there is a choice of analytic continuation commuting with Orlov's isomorphism  $\Phi_0$.
Let us express Orlov's isomorphisms as symplectic matrices $\UU_a$
with respect to the chosen bases for $H_{\RW}$ and $H_{\GW}$.
The linear map $T$ operates on the period integrals and changes the symplectomorphism
$\UU_{\text{\rm LG-CY}}$ by conjugation. Then, we have the identification
$$\Phi_a=T^{-a}\UU_{\text{\rm LG-CY}}T^a$$
via the morphism
$H_{\GW}\to \Gamma(D_{\GW},\nabla_{\GW})$.
\end{rem}

\small{

}

\vspace{.5cm}

\noindent \textsc{Institut Fourier, UMR du CNRS 5582,
Universit\'e de Grenoble 1,
BP 74, 38402,
Saint Martin d'H\`eres,
France}\\
\textit{E-mail address:} \url{chiodo@ujf-grenoble.fr}

\vspace{.3cm}

\noindent \textsc{Department of Mathematics, University of Michigan, Ann Arbor, MI 48109-1109,
USA} and \\ \textsc{Yangtze Center of Mathematics, Sichuan University, Chengdu,
610064, P.R. China}\\
\textit{E-mail address:} \url{ruan@umich.edu}
\end{document}